\newtheorem{defi}{Definition}[section]
\newtheorem{theo}[defi]{Theorem}
\newtheorem{lemm}[defi]{Lemma}
\newtheorem{prop}[defi]{Proposition}
\newtheorem{co}[defi]{Corollary}
\newtheorem{rem}[defi]{Remark}
\newcommand{\cals}{\mathcal{S}}
\newcommand{\R}{\mathbb{R}}
\newcommand{\Z}{\mathbb{Z}}
\newcommand{\N}{\mathbb{N}}
\newcommand{\Q}{\mathbb{Q}}
\newcommand{\per}{\mathrm{per}}
\newcommand{\calm}{\mathcal{M}}
\newcommand{\calc}{\mathcal{C}}
\newcommand{\xkr}{X_{k,\rho}}
\newcommand{\tilj}{\tilde{J}}
\title{Variational Structures for Infinite Transition Orbits \\of Monotone Twist Maps}
\author{Yuika Kajihara}
\date{\today}
\begin{document}
\maketitle

\begin{abstract}
In this paper, we consider chaotic dynamics and variational structures of area-preserving maps.
There is a lot of study on the dynamics of their maps and 
the works of Poincare and Birkhoff are well-known.
To consider variational structures of area-preserving maps,  we define a special class of area-preserving maps called {\it monotone twist maps}.
Variational structures determined from twist maps can be used for constructing characteristic trajectories of twist maps.
Our goal is to
prove the existence of an {\it infinite transition orbit},
which represents an oscillating orbit between two fixed points infinite times,
through minimizing methods.
\end{abstract}

\section{Introduction}
\label{section:intro}
%本結果の背景
%Rabinowitzの結果の説明
In this paper, we consider  chaotic dynamics and variational structures of area-preserving
%(we write AP in short)
maps.
The dynamics of such maps have been widely studied, with key findings by  Poincar\'e and Birkhoff.
There are a lot of related works, see \cite{Birkhoff1922, Birkhoff1966, Mather1986} for example.
To explore these variational structures, 
we define a special class of  area-preserving maps called {\it monotone twist maps}:

%def_monotonetwistmaps
\begin{defi}[monotone twist maps]
Set a map $f \colon \R / \Z \times [a,b] \to \R / \Z \times [a,b]$ and assume that $f \in C^1$ and
a lift $\tilde{f}$ of $f$ $\colon \R \times [a,b] \to  \R \times [a,b]$, $(x,y) \mapsto (f_1(x,y),f_2(x,y))(=(X,Y))$ satisfy the followings:
\begin{itemize}
  \item[($f_1$)] $\tilde{f}$ is  area-preserving, i.e., $dx \wedge dy = dX \wedge dY$;
  \item[($f_2$)] $\partial X / \partial y>0$ (twist condition), and
  \item[($f_3$)] Both two straight lines $y=a$ and $y=b$ are invariant curves of $f$, i.e. $f(x,a) - a =0$ and  $f(x,b) - b =0$ for all $x \in \R / \Z$.
\end{itemize}
Then $f$ is said to be a monotone twist map.
\end{defi}
By Poincar\'e's lemma, we get a generating function $h$ for a monotone twist map $f$
and  it satisfies
$
dh=YdX -ydx.
$
That is,
\[
y= \partial_1 h(x,X), \
Y= -\partial_2 h(x,X),
\]
where $\partial_1 = \partial/\partial x$ and $\partial_2 = \partial/\partial X$.
For the above $h$, by abuse of notation, we define $h \colon \R^{n+1} \to \R$ by:
\begin{align}
\label{action_n}
h(x_0,x_1, \cdots, x_n)=\sum_{i=1}^{n} h(x_i,x_{i+1}).
\end{align}
We can regard $h$ as a variational structure associated with $f$, because any critical point of $\eqref{action_n}$, say $(x_0,\cdots,x_n)$, gives us an orbit of $\tilde{f}$ by $y_i=-\partial h_1(x_i,x_{i+1})=\partial h_2(x_{i-1},x_{i})$.
Its relation implies that the orbit $\{ f^i(x_i,y_i)\}_{i \in \Z}$ corresponds to a {\textit{stationary configuration}} defined below.
This is known as the Aubry-Mather theory, which is so called because
Aubry studied critical points of the action $h$ in \cite{AubrDaer1983} and Mather developed the idea (e.g. \cite{Mather1982, Mather1987}).
%Bangert \cite{Bangert1988} investigates good conditions of $h$ for study in  {\it{minimal sets}}.
%Here we will give a summary of  \cite{Bangert1988}.
We briefly summarize Bangert's investigation of good conditions of $h$ for study in minimal sets \cite{Bangert1988}.

We consider the space of bi-infinite sequences of real numbers,
and define convergence of a sequence $x^n = (x^n_i)_{i \in \Z}\in \R^\Z$  to $x=(x_i)_{i \in \Z} \in \R^\Z$ by:
\begin{align}
\label{t_metric}
\lim_{n \to \infty} |x^n_i - x_i| = 0 \ ({}^{\forall} i \in \Z).
\end{align}
Now we treat a function $h$ satisfying {\it variational principle}.
All of the results in \cite{Bangert1988} we introduce assumes variational principle.
\begin{defi}[variational principle]
Let $h$ be a continuous map from $\R^2$ to $\R$
We call a function $h$ variational principle if it satisfies the following:
\begin{itemize}
  \item[($h_1$)] For all $(\xi,\eta) \in \R^2$, $h(\xi,\eta)=h(\xi+1,\eta+1)$;
  \item[($h_2$)] $\displaystyle{\lim_{\eta \to \infty} h(\xi,\xi+\eta) \to \infty \ (\text{uniformly in} \ \xi)}$;
  \item[($h_3$)] If $\underline{\xi}< \bar{\xi}$ and $\underline{\eta}< \bar{\eta}$, then
 $ h(\underline{\xi},\underline{\eta}) + h(\bar{\xi},\bar{\eta}) <   h(\underline{\xi},\bar{\eta}) + h(\bar{\xi},\underline{\eta}) $; and
  \item[($h_4$)] If $(x,x_0,x_1)$ and $(\xi,x_0,\xi_1)$ are minimal and $(x,x_0,x_1) \neq (\xi,x_0,\xi_1)$, then
  $(x-\xi)(x_1-\xi_1)<0$.
\end{itemize}
\end{defi}

In this paper,
we call an element $x=(x_i)_{i \in \Z}   \in \R^\Z$ a configuraition. 
There are distinctive configurations referred to as {\it{minimal configurations} }and {\it{stationary configurations}}.
\begin{defi}[minimal configuration/stationary configuration]
Fix $n$ and $m$ with $n<m$ arbitrarily.
A finite sequence $x=(x_i)_{i=n}^{m}$ is said to be minimal if,
for any (finite) configuration $(y_i)_{i=n_0}^{n_1} \in \R^{n_1-n_0+1}$ with $y_{n_0}=x_{n_0}$ and $y_{n_1}=x_{n_1}$,
\[
h(x_{n_0},x_{n_0+1}, \cdots, x_{n_1-1}, x_{n_1}) \le h(y_{n_0},y_{n_0+1}, \cdots, y_{n_1-1}, y_{n_1}),
\]
where  $n \le n_0 < n_1 \le m$.
A configuration $x=(x_i)_{i \in \Z}$ is called minimal if, for any $n<m$, we have $x=(x_i)_{i=n}^{m}$ is minimal.
Moreover, if $h \in C^1$, a configuration $x$ is called locally minimal or a stationary configuration if it satisfies:
\begin{align}
\label{stationary}
\partial_2 h (x_{i-1},x_i) + \partial_1h(x_i,x_{i+1})=0 \ ( {}^{\forall} i \in \Z).
\end{align}
\end{defi}

For $x=(x_i)_{i \in \Z} \in \R^\Z$, we define $\alpha^+(x)$ and $\alpha^-(x)$ by:
\[
\alpha^+(x) =\lim_{i \to \infty} \frac{x_i}{i},\
\alpha^-(x) =\lim_{i \to -\infty} \frac{x_i}{i}.
\]
We only discuss the case of $\alpha^+(x)=\alpha^{-}(x)$ in this paper.
\begin{defi}[rotation number]
If both $\alpha^+(x)$ and $\alpha^- (x)$ exist and $\alpha^+(x)=\alpha^-(x)(=:\alpha(x))$, 
then we call $\alpha(x)$ a rotation number of $x$.
\end{defi}

Let $\calm_\alpha$ be a minimal set consisting of minimal configurations with rotation number $\alpha$.
It is known that for any $\alpha \in \R$, the set $\calm_{\alpha}$ is non-empty and compact (see  \cite{Bangert1988} for the proof).
For $\alpha \in \Q$, we define periodicity in the following:

\begin{defi}[periodic configurations]
\label{def:periodic}
For $q \in \N$ and $p \in \Z$, a configuration $ x=(x_i)_{i \in \Z}$ is said to be  $(q,p)$-periodic
if $x=(x_i)_{i \in \Z} \in \R^\Z$ satisfies:
\[
x_{i+q}=x_{i} + p,
\]
for any $i \in \Z$.
\end{defi}
It is easily seen that if $x$ is $(q,p)$-periodic, then its rotation number is $p/q$.
This paper discusses only the case where $\alpha \in \Q$.
For $\alpha=p/q \in \Q$, we set:
\[
\calm_{\alpha}^\per := \{ x \in \calm_{\alpha} \mid  \text{$x$ is $(q,p)$-periodic}\}.
\]
\begin{defi}[neighboring pair]
For a set $A \subset \R^\Z$ and $a,b \in A$ with $a<b$, we call $(a,b)$ a neighboring pair of $A$ if $a,b \in A$
and there is no other $x \in A$ with $ a<x<b$. 
Here $a<b$ means $a_i < b_i$ for any $i \in \Z$.
\end{defi}

%Let a map $p_0 \colon \R^\Z \to \R$ by $p_0((x_i)_{i \in \Z})=x_0$.
Given a neighboring pair $(x^0,x^1)$ of $\calm_{\alpha}^\per$, define:
\begin{align*}
\calm_{\alpha}^+(x^0,x^1)&=\{x \in \calm_{\alpha} \mid |x_i - x_i^0| \to 0 \ (i \to -\infty) \ and \ |x_i - x_i^1| \to 0 \ (i \to \infty) \} \ and \\
\calm_{\alpha}^-(x^0,x^1)&=\{x \in \calm_{\alpha} \mid |x_i - x_i^0| \to 0 \ (i \to \infty) \ and \ |x_i - x_i^1| \to 0 \ (i \to -\infty) \}.
\end{align*}
Bangert showed the following proposition.
\begin{prop}[\cite{Bangert1988}]
Given $\alpha \in \Q$, $\calm_{\alpha}^\per$ is nonempty.
Moreover, 
if $\calm_\alpha^\per$ has a neighboring pair, 
then $\calm_{\alpha}^+$ and $ \calm_{\alpha}^-$ are nonempty.
\end{prop}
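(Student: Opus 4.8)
The plan is to treat the two assertions separately, first producing periodic minimizers by a finite-dimensional minimization, and then producing the transition configurations by a constrained minimization inside the strip bounded by a neighboring pair.

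For the nonemptiness of $\calm_{\alpha}^\per$ with $\alpha=p/q$, I would minimize the per-period action. A $(q,p)$-periodic configuration is determined by its coordinates $(x_0,\dots,x_{q-1})$, and by ($h_1$) the functional
\[
\Phi(x_0,\dots,x_{q-1}) = \sum_{i=0}^{q-1} h(x_i,x_{i+1}), \qquad x_q := x_0 + p,
\]
is invariant under the diagonal translation $x_i \mapsto x_i+1$, so $\Phi$ descends to a function that I can study on the slice $0 \le x_0 < 1$. Coercivity of $\Phi$ on this slice follows from the superlinear growth ($h_2$) together with periodicity ($h_1$): since $h$ is bounded below on a fundamental domain and any gap $x_{i+1}-x_i \to +\infty$ forces the corresponding summand to $+\infty$ while the gaps are constrained to sum to the fixed number $p$, a minimizing sequence keeps all gaps bounded, so a minimizer exists. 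The remaining step is to promote this periodic minimizer to an element of $\calm_\alpha$, i.e. to show that a minimizer of the per-period action is minimal in the sense of the definition, minimizing $h(x_{n_0},\dots,x_{n_1})$ against every competitor with the same endpoints. This is the standard consequence of Aubry's crossing lemma, which itself rests on the strict inequality ($h_3$): two minimal segments cannot cross more than once, and this rigidity lets one cut and paste a periodic minimizer against an arbitrary competitor without increasing the action. Being both $(q,p)$-periodic and minimal, the configuration lies in $\calm_{\alpha}^\per$.

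For the second assertion, fix a neighboring pair $(x^0,x^1)$ of $\calm_{\alpha}^\per$ and work in the strip $S=\{x : x^0_i \le x_i \le x^1_i \ \text{for all} \ i\}$. Since $x^0$ and $x^1$ are both $(q,p)$-periodic minimizers they share the same per-period action, so for a configuration asymptotic to $x^0$ at $-\infty$ and to $x^1$ at $+\infty$ the renormalized action
\[
W(x) = \sum_{i=-\infty}^{-1}\bigl[h(x_i,x_{i+1}) - h(x^0_i,x^0_{i+1})\bigr] + \sum_{i=0}^{\infty}\bigl[h(x_i,x_{i+1}) - h(x^1_i,x^1_{i+1})\bigr]
\]
has a chance of being finite. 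Concretely I would first minimize the finite action $\sum_{i=-N}^{N-1} h(x_i,x_{i+1})$ over $x \in S$ with pinned ends $x_{-N}=x^0_{-N}$ and $x_{N}=x^1_{N}$, obtaining minimizers $x^{(N)}$; the confinement to $S$ together with ($h_3$) prevents these from crossing $x^0$ or $x^1$, and the monotonicity encoded in ($h_3$)–($h_4$) gives the control needed to pass, along a subsequence, to a pointwise limit in the metric \eqref{t_metric} as $N \to \infty$. The limit $\hat x$ is a minimal configuration lying in $S$, since renormalizing by the fixed reference actions does not change which finite segments are minimizing.

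It remains to identify the asymptotics of $\hat x$, and this is where I expect the real difficulty. Because $\calm_\alpha$ is linearly ordered (no two minimal configurations of the same rotation number cross, again by ($h_3$)–($h_4$)), the forward and backward limits of $\hat x$ under the $q$-step shift are themselves minimal configurations lying in $S$, and being recurrent they must be periodic; as $(x^0,x^1)$ is a neighboring pair with nothing strictly between them in $\calm_{\alpha}^\per$, these limits can only be $x^0$ or $x^1$. The main obstacle is to rule out the degenerate outcome $\hat x \equiv x^0$ or $\hat x \equiv x^1$ and to establish that the transition actually occurs, with $\hat x \to x^0$ as $i \to -\infty$ and $\hat x \to x^1$ as $i \to +\infty$. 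This is precisely where the neighboring-pair hypothesis is essential: the gap between $x^0$ and $x^1$ guarantees that a configuration starting near $x^0$ and ending near $x^1$ incurs only a finite, attained amount of extra action $W$, so the minimizer genuinely crosses the strip rather than collapsing onto a boundary, placing $\hat x \in \calm_{\alpha}^+(x^0,x^1)$. Reversing the roles of the two ends in the pinning produces an element of $\calm_{\alpha}^-(x^0,x^1)$, completing the proof.
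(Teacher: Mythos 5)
The paper does not actually prove this proposition: it is quoted from Bangert \cite{Bangert1988}, and your sketch follows the same standard route (per-period minimization plus the crossing lemma for $\calm_{\alpha}^\per$; constrained minimization in the strip for the connecting orbits). The first half is essentially correct as an outline. Two points of care rather than gaps: coercivity of $\Phi$ on the slice $0\le x_0<1$ needs the two-sided form of $(h_2)$ (a gap escaping to $-\infty$ forces another to $+\infty$, and one needs $h$ bounded below on $\{0\le\xi\le 1\}\times\R$, which is what $\lim_{|\eta|\to\infty}h(\xi,\xi+\eta)=\infty$ provides), and one should reduce to $p,q$ coprime before invoking minimal periods.

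The genuine gap is in the second half. Pinning $x_{-N}=x^0_{-N}$, $x_N=x^1_N$ and passing to a pointwise limit does not, as written, produce an element of $\calm_{\alpha}^+$: each $x^{(N)}$ does make the transition, but the index interval on which the transition occurs is not controlled uniformly in $N$, so it can drift to $+\infty$ or $-\infty$ and the limit $\hat x$ can be identically $x^0$ or identically $x^1$. Your stated reason for excluding this --- that the neighboring-pair hypothesis makes the renormalized action $W$ finite and attained --- is not an argument: finiteness of $W$ along the sequence says nothing about where the transition sits, and you never actually minimize $W$, only the truncated actions. The missing ingredient is a normalization: translate each $x^{(N)}$ by an integer multiple of $(q,p)$ in index and value so that, say, $x^{(N)}_0\ge\tfrac12(x^0_0+x^1_0)$ while $x^{(N)}_{-q}<\tfrac12(x^0_{-q}+x^1_{-q})$; this is possible because each $x^{(N)}$ crosses the midline, and the shift preserves minimality, the strip, and the pinning structure. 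The limit then inherits this separation and cannot coincide with either boundary configuration; only after that does your ordering/recurrence argument identify the $\alpha$- and $\omega$-limits under the $q$-step shift as $x^0$ and $x^1$ respectively, using that nothing in $\calm_{\alpha}^\per$ lies strictly between them.
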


Although we have discussed minimal configurations in the preceding paragraph, 
there are also interesting works that treat non-minimal orbits between periodic orbits,
particularly,  \cite{Rabinowitz2008} and \cite{Yu2022}.
In  \cite{Rabinowitz2008}, Rabinowitz used minimizing methods to prove the existence of three types of solutions---periodic, heteroclinic and homoclinic---in potential systems with reversibility for time, i.e. $V(t,x)=V(-t,x)$.
Under an assumption called  a {\it gap}, which is similar to a {neighboring pair}, for a set of periodic and heteroclinic solutions, 
non-minimal heteroclinic and homoclinic orbits can be given between two periodic orbits.

Since the heteroclinic/homoclinic orbit is a trajectory that transits between two equilibrium points, we refer to these orbits as {\textit{$k$-transition orbits}} in this paper. For example, a monotone heteroclinic orbit is a one-transition orbit.
Non-minimal orbits are realized as {\it{n-transition orbits}} for $n \ge 2$ and
 they are heteroclinic when $n$ is odd and homoclinic when $n$ is even.
 (You can regard each configuration $x \in \R^\Z$ as its graph $\{(i,x_i) \mid i \in \Z\}$, see the following figures.)

\begin{figure}[htbp]
  \begin{minipage}[b]{0.48\linewidth}
    \centering
    \includegraphics[width=5cm]{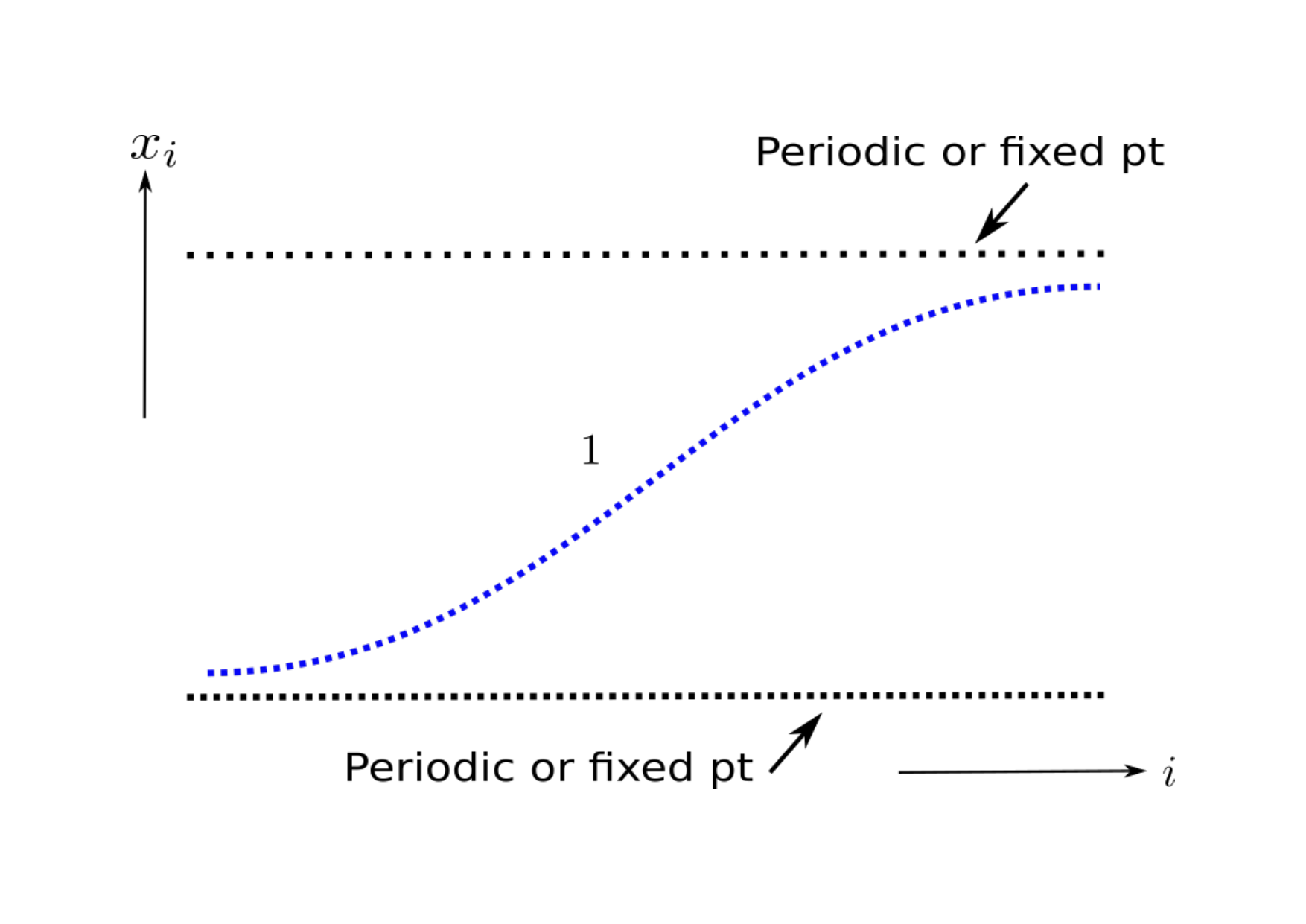}
    \subcaption{$1$-transition heteroclinic orbit}
  \end{minipage}
  \begin{minipage}[b]{0.48\linewidth}
    \centering
    \includegraphics[width=5cm]{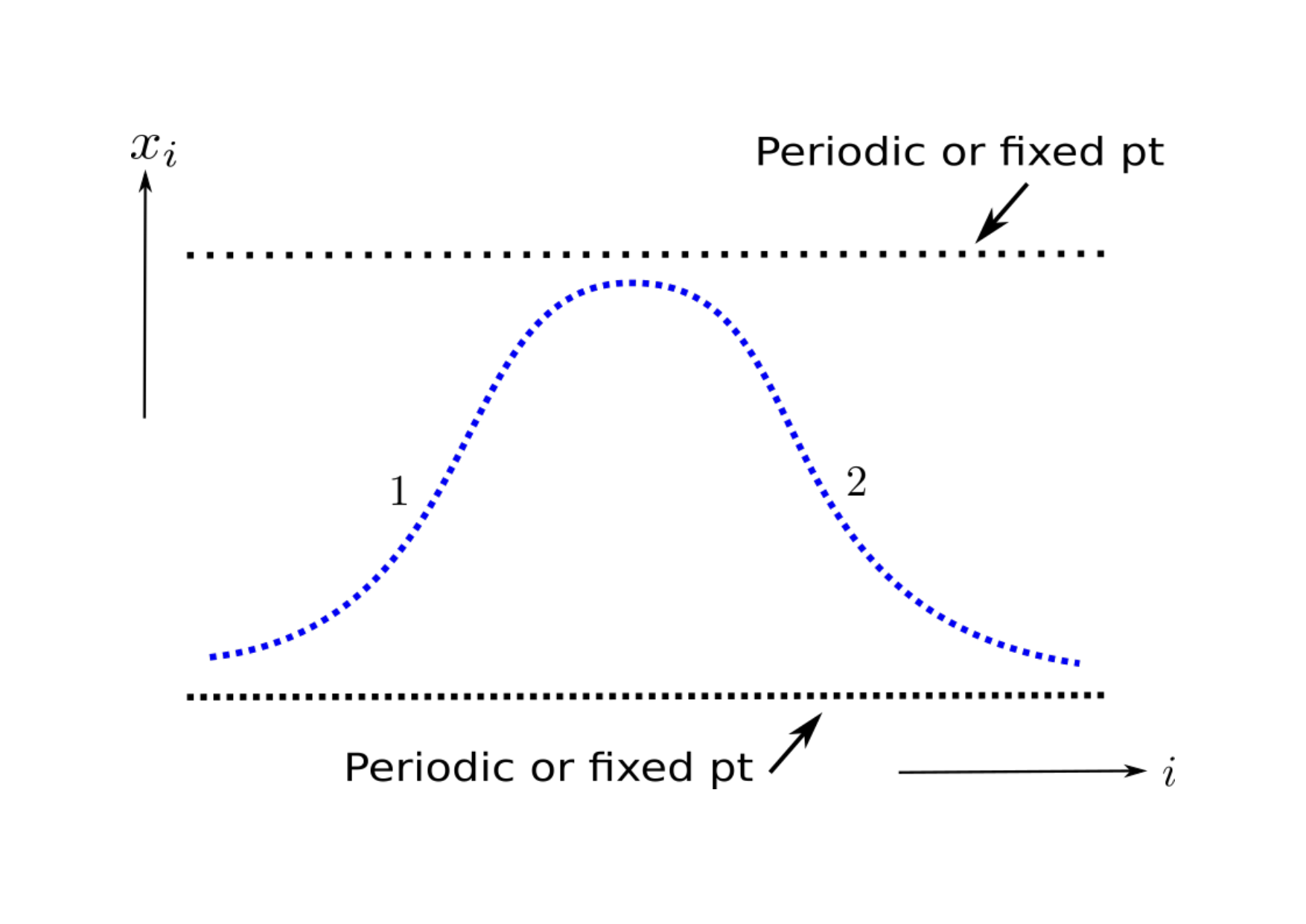}
    \subcaption{$2$-transition homoclinic orbit}
  \end{minipage}
  \caption{One and two transition orbits}
\end{figure}

 \begin{rem}
There are two remarks about one-transition orbits:
(a) Each element of the sets $\calm_{\alpha}^\pm$ implies monotone heteroclinic orbits and Mather's result \cite{Mather1993} is the first to discuss this problem.
(b)
The existence of one-transition orbits ( i.e., monotone heteroclinic orbits)  does not require gaps for heteroclinic orbits.
This can also be illustrated by considering a simple pendulum system
since a set of heteroclinic orbits is dense in its system.
%(b) Definition \ref{defi:transition} and `$k$-transition' in \cite{RabiStre2011} are different. 
%The latter means a solution that passes between $k-1$ periodic orbits in PDEs.
\end{rem}

Rabinowitz's approach can be applied to variational methods for area-preserving maps. 
Yu \cite{Yu2022} added variational principle $h$ to the following assumption $(h_5)-(h_6)$ to $h$:
\begin{itemize}
  \item[($h_5$)] There exists a positive continuous function $p$ on $\R^2$ such that:
  \[
  h(\xi,\eta') +  h(\eta,\xi') -   h(\xi,\xi') -  h(\eta,\eta')  > \int_{\xi}^{\eta}  \int_{\xi'}^{\eta'} p
  \]
  if $\xi < \eta$ and $\xi' < \eta'$.
  \item[($h_6$)] There is a $\theta > 0$ satisfying the following conditions:
  \begin{itemize}
  \item $\xi \mapsto \theta \xi^2 /2 - h(\xi,\xi')$ is convex for any $\xi'$, and
  \item $\xi' \mapsto \theta \xi'^2 /2 - h(\xi,\xi')$ is convex for any $\xi$.
  \end{itemize}
\end{itemize}
In the rest of this paper, we assume $(h_1)-(h_6)$ for $h$.
\begin{rem}
\,
\label{rem:delta_bangert}
\begin{itemize}
\item[(a)] One of a sufficient conditions for $(h_2)-(h_5)$ is 
\[
(\tilde{h}) \ \text{$h \in C^2$ and $\partial_1 \partial_2 h \le -\delta <0$ for some $\delta>0$}.
\]
Bangert \cite{Bangert1988} shows that assuming  $(h_2)-(h_4)$ implies $(\tilde{h})$.
To verify the assumption $(h_5)$, we can ensure it by choosing a positive function $\rho = \delta$.
If a monotone twist map $f$ is of class $C^1$ and satisfies $\partial X/\partial y \ge \delta$ for some $\delta>0$, a generating function $h$ for $f$ satisfies $(\tilde{h})$.
However, $(\tilde{h})$ is not a necessary condition for satisfying $(h_2)-(h_5)$.
%as seen in Section \ref{sec:example_h}.
\item[(b)] Assuming $(h_6)$ allows us to derive Lipschitz continuity for $h$ in the following meaning: there is a Lipschitz constant $C$ satisfying:
\begin{align*}
&h(\xi,\eta_1)-h(\xi,\eta_2) \le C|\eta_1-\eta_2|, \text{and}\\
&h(\xi_1,\eta)-h(\xi_2,\eta) \le C|\xi_1-\xi_2|
\end{align*}
%In fact,]
\item[(c)] 
If $h$ is of class $C^1$, we do not require $(h_6)$.
\end{itemize}
\end{rem}
 
 Clearly, $(h_5)$ implies $(h_3)$.
Mather \cite{Mather1987} proved
that  if $h$ satisfies $(h_1)$-$(h_6)$, then $\partial_2 h (x_{i-1},x_i)$ and
 $\partial_1h(x_i,x_{i+1})$ exist in the meaning of the left-sided limit
 (even if $h$ is not differentiable).
 In addition, he proved that if $x$ is a locally minimal configuration, then it satisfies
 $\eqref{stationary}$.
 Hence we can treat a stationary configuration for non differentiable functions.

Yu applied Rabinowitz's methods to monotone twist maps to show finite transition orbits of monotone twist maps for all $\alpha=p/q \in \Q$.
We will give a summary of his idea in the case of $\alpha=0$ (i.e. $(q,p)=(1,0)$ in Definition \ref{def:periodic}).
 Let $(u^0,u^1)$ be a neighboring pair of $\calm_0^\per$.
 By abuse of notation, we then denote $u^j$ for $j=0,1$ by the constant configuration $u^j= (x_i)_{i\in\Z}$ where $x_i=u^j$ for all $i \in \Z$.
We set:
\begin{align}
\label{c}
{c:=\min_{ x \in \R} h(x,x)}(=h(u^0,u^0)=h(u^1,u^1)).
\end{align}
And:
\begin{align}
\label{pre-action}
I(x) := \sum_{i \in \Z } a_i(x),
\end{align}
where $a_i(x)= h(x_i,x_{i+1}) - c$.
Yu \cite{Yu2022} studied local minimizers of $I$ to show the existence of finite transition orbits,
i.e., heteroclinic or homoclinic orbits.
Given a rational number $\alpha \in \Q$ and a neighboring pair  $(x^0,x^1)$ of $\calm_{\alpha}^\per$,
we let:
\begin{align*}
I^+_{\alpha}(x^0,x^1)&=\{x_0 \in \R \mid x=(x_i)_{i \in \Z} \in \calm_{\alpha}^+(u^0,u^1)\}, \ and\\
I^-_{\alpha}(x^0,x^1)&=\{x_0 \in \R \mid x=(x_i)_{i \in \Z} \in  \calm_{\alpha}^-(u^0,u^1) \}.
\end{align*}
%where $\calm_{\alpha}^0(u^0,u^1)$ is a subset of $\calm_{\alpha}^+(u^0,u^1)$ and
%$\calm_{\alpha}^1(u^0,u^1)$ is also a subset of $\calm_{\alpha}^-(u^0,u^1)$.
%We give each precise definition of $\calm_{\alpha}^i(u^0,u^1)$ $(i=0,1)$ in Section \ref{sec:pre}.
Under the above setting, he showed:
\begin{theo}[Theorem 1.7, \cite{Yu2022}]
\label{theo:yu_main}
Given a rational number $\alpha \in \Q$ and a neighboring pair $(x^0,x^1)$ of $\calm_{\alpha}^\per$.
If
\begin{align}
\label{hetero_gap}
I^+_{\alpha}(x^0,x^1) \neq (x_0^0,x_0^1) \ \text{and} \
I^-_{\alpha}(x^0,x^1) \neq (x_0^0,x_0^1) ,
\end{align}
then,
for every ${\delta}>0$ small enough,
there is an $m=m(\delta)$
such that for every sequence of integers $q=( q_i )_{i \in \Z}$ with $q_{i+1}-q_{i} \ge 4m$ and for every $j,k \in \Z$ with $j<k$,
there is a configuration $x=(x_i)_{i \in \Z} $ for $h$ satisfying:
\begin{enumerate}
\item $x_i^0<x_i<x_{i}^1$ for all $i \in \Z$;
\item $|x_{q_i-m}-x^i_{q_i-m}| \le \delta$ \text{and} \, $|x_{q_i+m}-x^i_{q_i+m}| \le \delta$ for all $i=j,\dots,k$;
\item $|x_i - x_i^j| \to 0$ as $i \to -\infty$ \text{and} \, $|x_i - x_i^k| \to 0$ as $i \to +\infty$.
\end{enumerate}
Here, for any $j \in \Z$, $x^j=x^0$, if $j$ is even, and $x^j=x^1$, if $j$ is odd.
\end{theo}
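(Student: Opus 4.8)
The plan is to reduce to $\alpha=0$ and realize the desired orbit as a minimizer of the renormalized action $I$ of \eqref{pre-action} over a constraint class that encodes the prescribed oscillation, in the spirit of Rabinowitz \cite{Rabinowitz2008} and Yu \cite{Yu2022}. Passing to the $q$-th iterate of $f$ and translating by $p$ turns a $(q,p)$-periodic problem into a $(1,0)$-periodic one whose generating function still obeys $(h_1)$--$(h_6)$, so I may assume $x^0=u^0<u^1=x^1$ are the constant configurations and work on the order interval $\mathcal K=\{x\in\R^\Z : u^0\le x_i\le u^1\ \forall i\}$. On $\mathcal K$ the renormalized action $I$ takes values in $(-\infty,+\infty]$, is bounded below, and is lower semicontinuous for the topology \eqref{t_metric}; these are standard facts of the Aubry--Mather framework \cite{Bangert1988,Mather1987}. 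Writing $\mu=(u^0+u^1)/2$ for the midpoint, I fix the constraint class
\[
\Gamma=\Big\{x\in\mathcal K : x_{q_i}\le\mu\ \text{for even } i,\ \ x_{q_i}\ge\mu\ \text{for odd } i,\ \ j\le i\le k\Big\}.
\]
Since $x^i$ equals $u^0$ for even $i$ and $u^1$ for odd $i$, the constraints at consecutive checkpoints pin $x$ to opposite sides of $\mu$ and force a crossing in each gap $(q_i,q_{i+1})$, i.e.\ the alternating transition pattern of the statement. The gap hypothesis \eqref{hetero_gap} is what makes $\Gamma$ useful: by Bangert's proposition it guarantees that $\calm_0^+(u^0,u^1)$ and $\calm_0^-(u^0,u^1)$ contain \emph{non-constant} minimal heteroclinics of finite renormalized action, so $\Gamma$ contains configurations with $I<+\infty$, obtained by splicing such heteroclinics with sufficient spacing.

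Second, I would extract a minimizer. A minimizing sequence in $\Gamma$ has uniformly bounded action, and the Lipschitz and convexity estimates of Remark \ref{rem:delta_bangert}(b) together with the strict monotonicity $(h_5)$ make it precompact for the product topology; lower semicontinuity then yields a minimizer $x^\ast\in\Gamma$. Because truncating any competitor into $\mathcal K$ does not increase $I$ (the Aubry comparison, a consequence of $(h_3)$), a minimizer over the order interval is in fact a genuine minimal configuration; hence $x^\ast$ is minimal on every finite segment where the checkpoint constraints are inactive. The decisive point is that these constraints are indeed inactive, $x^\ast_{q_i}\ne\mu$ with strict inequality on the correct side: if some constraint were active one could splice a segment of a non-constant minimal heteroclinic (available by \eqref{hetero_gap}) into a window about $q_i$ and, using the \emph{strict} inequality in $(h_5)$, strictly lower $I$, contradicting minimality. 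With all constraints inactive, $x^\ast$ is a free local minimizer of $I$, so by Mather's theorem \cite{Mather1987} it satisfies \eqref{stationary} and is an orbit; finally $(h_4)$, applied to the minimal configuration $x^\ast$ and the constant minimal configurations $u^0,u^1$, forces the strict order $x^0<x^\ast<x^1$, which is property (1).

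Third, properties (2) and (3) and the very meaning of $m=m(\delta)$ come from a uniform decay estimate. Using the compactness of $\calm_0^\pm$ together with the strict convexity/monotonicity of $(h_5)$--$(h_6)$, a minimal heteroclinic between $u^0$ and $u^1$ approaches its asymptotic constant orbit at a rate independent of the orbit, so there is an $m=m(\delta)$ beyond which it is already within $\delta$. Since each transition of $x^\ast$ costs at least the positive action of a minimal heteroclinic, the minimizer performs exactly one transition per gap, and the hypothesis $q_{i+1}-q_i\ge 4m$ keeps consecutive transitions at least $2m$ apart; consequently $x^\ast$ has settled to within $\delta$ of $x^i$ by times $q_i\pm m$, which is property (2). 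For property (3) I use that beyond the extreme checkpoints $q_j$ and $q_k$ there are no constraints, so each tail of $x^\ast$ is a one-sided minimizer in $\mathcal K$; by Bangert's description of $\calm_0^\pm$ \cite{Bangert1988} the left tail is asymptotic to $x^j$ and the right tail to $x^k$.

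The principal obstacle is the inactivity step: showing that \eqref{hetero_gap} makes every checkpoint constraint non-binding. The splicing exchange must be localized so that it does not disturb the other windows, and the action must be shown to decrease \emph{strictly}, for which the quantitative strictness of $(h_5)$ is essential; it is here that the construction genuinely uses the existence of non-trivial minimal transitions rather than merely the order structure of $\mathcal K$. A secondary but unavoidable technical point, needed to define $m(\delta)$ uniformly in the checkpoint data, is the rate-of-convergence estimate for minimal heteroclinics, which I would obtain from the compactness of $\calm_0^\pm$ and a standard comparison argument.
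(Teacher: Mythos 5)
This theorem is not proved in the paper at all: it is imported verbatim as Theorem~1.7 of \cite{Yu2022}, so there is no in-paper proof to compare against line by line. The closest internal benchmark is the proof of Theorem~\ref{theo:main} in Section~\ref{sec:pf}, which uses the same architecture you propose (reduce to $\alpha=0$ by conjunction, minimize a renormalized action over a constrained class, show the constraints are inactive, invoke Lemmas~\ref{lemm:estimate_periodic1}--\ref{lemm:estimate_periodic2} for the strict order). At that level your outline is the right one.

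However, there is a genuine gap at exactly the step you yourself flag as the principal obstacle. You claim that the role of hypothesis~\eqref{hetero_gap} is to guarantee, ``by Bangert's proposition,'' that $\calm_0^{\pm}(u^0,u^1)$ contain non-constant minimal heteroclinics. That is not what the gap condition does: the nonemptiness of $\calm_0^{\pm}$ follows from the neighboring pair alone (the paper states explicitly that one-transition orbits require no gap). Condition~\eqref{hetero_gap} says that the set of time-zero values of minimal heteroclinics misses some point of $(u^0,u^1)$, and its function is quantitative: it produces the strictly positive excesses $e_0,e_1$ of Lemma~\ref{lemm:gap_estimate}, i.e.\ any configuration forced through a checkpoint value chosen in the complement of $I^{\pm}_0(u^0,u^1)$ pays at least $c_0+e_0$ on that window, while a spliced minimal heteroclinic pays only about $c_0$, and the splicing error is absorbed because $e_0>0$. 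With your constraint class ($x_{q_i}$ on one side of the midpoint $\mu$) and without selecting the constraint levels inside the gap, the inactivity argument collapses: if $I^{+}_0(u^0,u^1)=(u^0,u^1)$ a minimal heteroclinic passes through every admissible value, the splicing exchange yields no strict decrease (and $(h_5)$ alone cannot supply one, since the competitor may itself be minimal), so the constrained minimizer may sit on the boundary of the constraint set and fail to be stationary. A secondary weakness is your derivation of $m(\delta)$: you cannot transfer a uniform decay rate of minimal heteroclinics to the minimizer $x^\ast$, which is not itself a minimal heteroclinic; the correct mechanism is the lower bound $n\phi(\delta)$ of Lemmas~\ref{lemm:yu_lower}--\ref{lemm:finite_bdd} against an upper action budget, as in Lemmas~\ref{lemm:distance} and~\ref{lemm:distance_2}.
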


Furthermore, Rabinowitz \cite{Rabinowitz2008} proved the existence of an infinite transition orbit as a limit of sequences of finite transition orbits.
However, the variational structure of infinite transition orbits for potential systems is an open question in his paper.
To consider the question for twist maps, the following proposition is crucial.
\begin{prop}[Proposition 2.2, \cite{Yu2022}]
\label{prop:finite}
If $I(x) < \infty$, then $|x_i -u^1| \to 0$ or $|x_i -u^0| \to 0$ as $|i| \to \infty$.
\end{prop}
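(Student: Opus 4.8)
The plan is to prove the two tails separately: show that $x_i$ converges as $i\to+\infty$ and as $i\to-\infty$, with each limit a zero of the function $s\mapsto h(s,s)-c$. Throughout the construction the configurations are confined to the strip $u^0\le x_i\le u^1$ (the analogue of condition (1) in Theorem~\ref{theo:yu_main}), and it is precisely this confinement that pins the two limits to $u^0,u^1$ rather than to integer translates, since the neighboring-pair hypothesis guarantees that $\{s\in[u^0,u^1]:h(s,s)=c\}=\{u^0,u^1\}$. So the real content is: finite renormalized action forces each tail to converge to a ground state.

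First I would extract the elementary consequence of finiteness. Because $I(x)=\sum_{i\in\Z}a_i(x)$ converges to a finite value, its terms must vanish, so $a_i(x)\to0$ and hence $h(x_i,x_{i+1})\to c$ as $|i|\to\infty$. Under $u^0\le x_i\le u^1$ the sequence is automatically bounded (without assuming confinement one still gets boundedness, since an escape to infinity would require infinitely many crossings between consecutive ground states, each contributing a fixed positive amount of renormalized action and thus $I(x)=\infty$).

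The core is a compactness-plus-rigidity argument; take $i\to+\infty$. Given any $n_k\to\infty$, the shifts $\sigma^{n_k}x:=(x_{n_k+i})_{i\in\Z}$ are bounded, so by a diagonal extraction a subsequence converges coordinatewise to some $y\in\R^\Z$. Passing to the limit in $h(x_{n_k+i},x_{n_k+i+1})\to c$ and using continuity of $h$ yields $h(y_i,y_{i+1})=c$ for every $i$, i.e. $I(y)=0$. The key lemma I would then establish is the \emph{rigidity} statement that any configuration with $h(y_i,y_{i+1})=c$ for all $i$ is a constant ground state: such a $y$ is calibrated for the minimal average action, which by the Aubry--Mather theory is exactly $c=\beta(0)$ (see \cite{Bangert1988}), hence $y$ is minimal; a minimal configuration confined to $[u^0,u^1]$ is either a constant ($u^0$ or $u^1$) or a monotone heteroclinic between them, and the latter has strictly positive renormalized action because the transition must pass through the region $\{h(s,s)>c\}$ forced by the neighboring-pair hypothesis, contradicting $I(y)=0$. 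Therefore every coordinatewise limit of shifts equals $u^0$ or $u^1$.

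From this I upgrade to genuine convergence. Since every shift-limit is one of the two constants, one deduces $\mathrm{dist}(x_i,\{u^0,u^1\})\to0$ and $|x_{i+1}-x_i|\to0$ as $i\to+\infty$, for otherwise a subsequence would produce a limit violating one of these. As $u^0$ and $u^1$ are separated by $u^1-u^0>0$, the vanishing of the increments forbids $x_i$ from migrating from a neighborhood of one endpoint to a neighborhood of the other once $i$ is large; a short trapping argument then confines $x_i$ eventually to a neighborhood of a single endpoint and forces convergence to it. Running the same argument as $i\to-\infty$ completes the proof. The hard part is the rigidity lemma of the third paragraph---identifying the calibrated configurations ($h(y_i,y_{i+1})\equiv c$) with the ground states, equivalently showing the transition action is strictly positive---which is where the neighboring-pair hypothesis and the characterization of $c$ as the minimal average action are indispensable; by comparison the coercivity step ($a_i\to0$) and the trapping step are routine.
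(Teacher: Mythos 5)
The paper does not actually prove this proposition---it is imported verbatim from Yu \cite{Yu2022}---so the only internal benchmark is the toolset the paper restates (Lemmas \ref{lemm:yu_lower}, \ref{lemm:finite_bdd}, \ref{lemm:positive} and Corollary \ref{co:positive_het}) together with the warning in Section \ref{sec:example_h} that $h(x,y)-c\ge 0$ fails in general and that the proof of Proposition \ref{prop:finite} is ``somewhat technical'' for precisely that reason. Measured against that, your argument has two genuine gaps, both traceable to treating the summands $a_i(x)=h(x_i,x_{i+1})-c$ as if they were nonnegative. First, ``$I(x)<\infty$, so $a_i(x)\to 0$'' is not justified: $I$ is a bi-infinite, conditionally summed series with sign-changing terms, so convergence of the symmetric partial sums gives at best $a_n+a_{-n}\to 0$, and the only general lower control is on block sums, $\sum_{i=m}^{n}a_i(x)\ge -C|x_{n+1}-x_m|\ge -C(u^1-u^0)$ (Lemma \ref{lemm:finite_bdd}), which does not force individual terms to decay; in fact $a_i(x)\to 0$ is essentially a consequence of the proposition rather than an available first step. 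Second, the justification of your rigidity lemma, ``$h(y_i,y_{i+1})\equiv c$ implies $y$ is calibrated, hence minimal,'' is invalid here: since one-step actions can dip below $c$, a configuration all of whose one-step actions equal the minimal average $c$ may still admit cheaper competitors, so minimality does not follow. Moreover, to exclude heteroclinic-type shift limits with zero renormalized action you would need $c_0>0$ and $c_1>0$ separately, whereas Corollary \ref{co:positive_het} only gives $c_0+c_1=c_\ast>0$.

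The repair is to argue at the level of blocks rather than single terms, which is what the quoted lemmas are designed for: a block of length $n$ staying $\delta$-away from both $u^0$ and $u^1$ costs at least $n\phi(\delta)-C(u^1-u^0)$, so finiteness of $I$ forces each tail of $x$ to re-enter every $\delta$-neighborhood of $\{u^0,u^1\}$ with bounded gaps; an excursion leaving a $\delta$-neighborhood of one endpoint to distance $\delta'$ and returning costs at least $\phi(\delta')-2C\delta>0$, so only finitely many occur; and a round trip between neighborhoods of $u^0$ and $u^1$ costs at least $c_\ast-O(\delta)>0$ by the gluing argument of Corollary \ref{co:positive_het}, so each tail makes only finitely many transitions. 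These three facts yield exactly the trapping conclusion of your last paragraph. That endgame is sound, but the two steps feeding into it must be replaced by these block estimates; as written, the chain of implications breaks at its first link.
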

Since this implies that $I(x) = \infty$  if $x$ has infinite transitions, 
we need to fix the normalization of $I$.
Therefore, we focus on giving the variational structure and boundary condition that characterize infinite transition orbits of monotone twist maps.
As a result, the function $J$ and set  $\xkr$ defined in Section \ref{sec:pf} represent a variational structure and a configuration space for infinite transition orbits.
Through this variational problem, we showed:
\begin{theo}[Our main theorem]
\label{theo:main}
Assume the same condition of Theorem \ref{theo:yu_main}.
Then, for every positive sequence ${\epsilon}=(\epsilon_i)_{i \in \Z}$,
%with $\epsilon_i$ small enough,
there is an $m=( m_i )_{i \in \Z}$
such that for every sequence of integers $k=(k_i)_{i \in \Z}$ with $k_{i+1}-k_{i} \ge m_i$,
there is a configuration $x=(x_i)_{i \in \Z}$ for $h$ satisfying:
\begin{enumerate}
\item $x_i^0<x_i<x_{i}^1$ for all $i \in \Z$;
\item  for any $j \in \Z$, \ $|x_{i}-x^{2j}_{i}| \le \epsilon_{2j}$  if $i \in [k_{4j}, k_{4j+1}]$ \text{and} \, 
$|x_{i}-x^{2j+1}_{i}| \le \epsilon_{2j+1}$ if $i \in [k_{4j+2}, k_{4j+3}]$.
\end{enumerate}
%For any $j \in \Z$, $x^j=x^0$, if $j$ is even, and $x^j=x^1$, if $j$ is odd.
%Under the same assumption as Theorem \ref{theo:yu_main},
%there exist uncountably infinitely many $\infty$-transition orbits.
\end{theo}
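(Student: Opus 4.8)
The plan is to realise the infinite transition orbit as a limit of the finite transition orbits supplied by Theorem~\ref{theo:yu_main}, while handling the obstruction recorded in Proposition~\ref{prop:finite}: since any configuration with infinitely many transitions has $I(x)=\infty$, the functional $I$ from \eqref{pre-action} cannot be minimised directly over such configurations. The resolution, as anticipated in Section~\ref{sec:pf}, is to replace $I$ by a renormalised functional $J$ (and its normalised form $\tilj$) on the constrained space $\xkr$, in which the action of the reference periodic pattern is subtracted block by block so that the total remains finite; minimising $J$ over $\xkr$ should then yield a genuine stationary configuration exhibiting the prescribed oscillation.

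Concretely, I would first fix the parameters. Given the tolerances $\epsilon=(\epsilon_i)_{i\in\Z}$, set on each block a $\delta$ smaller than the relevant $\epsilon_i$ and read off Yu's gap $m(\delta)$ from Theorem~\ref{theo:yu_main}; the required gap sequence $m=(m_i)_{i\in\Z}$ is then defined by taking, on the $i$th block, a fixed multiple of this $m(\delta)$. For each truncation level $N$ I would apply Theorem~\ref{theo:yu_main} on the index range $-N\le j\le N$, placing Yu's auxiliary sequence $q$ at the centres of the target intervals determined by $k=(k_i)_{i\in\Z}$. This produces, for every $N$, a stationary configuration $x^{(N)}$ with $x^0_i<x^{(N)}_i<x^1_i$ for all $i$ that is $\delta$-close to the correct periodic pattern near each prescribed point and asymptotic to $x^{-N}$ and $x^{N}$ at the two ends.

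Because every $x^{(N)}$ lies in the order interval between the constant patterns $u^0$ and $u^1$, the sequence is coordinatewise bounded, and a diagonal extraction yields a subsequence converging in the product topology \eqref{t_metric} to some $x=(x_i)_{i\in\Z}$. The stationarity equation \eqref{stationary} is a three-term relation in $(x_{i-1},x_i,x_{i+1})$ whose coefficients are the one-sided derivatives $\partial_2 h$ and $\partial_1 h$, which exist under $(h_1)$--$(h_6)$ by Mather's theorem; passing to the pointwise limit then shows that $x$ is again a configuration for $h$. The closeness conditions of item~(2) are closed constraints on finitely many coordinates of each fixed block, and every $x^{(N)}$ with $N$ large enough already satisfies them, hence so does $x$; since the blocks where $x$ tracks $x^0$ alternate with those where it tracks $x^1$, the limit retains infinitely many transitions.

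Two points require real work. First, Theorem~\ref{theo:yu_main} only guarantees closeness at the two endpoints $q_i\pm m$, whereas item~(2) demands closeness on the whole interval; I would bridge this with a trapping lemma stating that a locally minimal segment confined to $(x^0,x^1)$ and $\delta$-close to $x^0$ at both ends of a long interval stays within $C\delta$ of $x^0$ throughout---a monotone-twist shadowing estimate built from the strict inequality $(h_5)$. Second, and the main obstacle, is recovering the strict inequalities of item~(1): the pointwise limit gives only $x^0\le x\le x^1$, and for a possibly non-minimal stationary configuration a touching point need not force coincidence, so a naive comparison argument fails. This is exactly where the variational formulation on $\xkr$ is indispensable: a minimiser of $J$ that touched $x^0$ or $x^1$ could be strictly decreased by a one-sided inward perturbation, again by the strict inequality $(h_5)$, contradicting minimality, so the minimiser is automatically interior, hence stationary, and lies strictly between the two periodic patterns. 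Establishing existence, lower semicontinuity and the coercivity of $J$ on $\xkr$ needed for this minimiser to exist is the technical heart of the argument.
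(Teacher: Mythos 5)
Your proposal correctly names the paper's variational framework---the renormalized functional $J$ and the constrained space $\xkr$---but the argument you actually develop is a different one (a diagonal limit of the finite transition orbits of Theorem \ref{theo:yu_main}), and the two halves of your write-up are never connected. The configuration you construct is a product-topology limit of Yu's orbits $x^{(N)}$, whereas the interiority argument you invoke at the end applies to a minimizer of $J$ on $\xkr$; nothing in the proposal shows that the limit of the $x^{(N)}$ minimizes $J$, nor that the $x^{(N)}$ are constrained minimizers on the segments where you want to apply your ``trapping lemma'' (Theorem \ref{theo:yu_main} as stated only gives closeness at the isolated points $q_i\pm m$, not minimality of the intervening blocks). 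The paper instead takes the direct route throughout: it shows $J$ is well defined and bounded below on $\xkr$ (using $\sum_i\rho_i<\infty$ and Lipschitz continuity), extracts a minimizer $x^\ast$ by sequential compactness of $\xkr$ (Proposition \ref{prop:min}), and then proves $x^\ast$ is stationary and satisfies (1) and (2).

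The genuine gap is in that last step. The boundary of $\xkr$ is not the set of configurations touching $u^0$ or $u^1$ (that degeneracy is dispatched by Lemmas \ref{lemm:estimate_periodic1} and \ref{lemm:estimate_periodic2}); it is the set where some pinning constraint $|x_{k_i}-u^{j}|=\rho_i$ is active, and a minimizer sitting there need not satisfy the stationarity equation at $i=k_i$. Ruling this out is exactly where the hypothesis \eqref{hetero_gap} must enter, and your proposal never uses it. The paper chooses $\rho$ so that $u^{j}\pm\rho_i$ lies outside $I^{+}_0(u^0,u^1)$, resp.\ $I^{-}_0(u^0,u^1)$, so that Lemma \ref{lemm:gap_estimate} yields a positive energy gap $e_j$ between the pinned and unpinned heteroclinic levels; it then chooses the $k_i$ far enough apart that, by Lemmas \ref{lemm:distance} and \ref{lemm:distance_2}, the segment of $x^\ast$ between consecutive pinning points can be spliced onto a true minimal heteroclinic with matching error at most $C(\delta_{2i-1}+\delta_{2i})<e_j/2$, so that if a constraint were active the surgery would strictly decrease $J$, a contradiction. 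Your ``one-sided inward perturbation by $(h_5)$'' addresses only the touching of $u^0$ or $u^1$, which is not where the difficulty lies; without the gap-plus-surgery step neither stationarity at the $k_i$ nor, ultimately, the strict inequalities in conclusion (1) is established.
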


%We give some remarks.
%In Theorem \ref{theo:main_mtm},  two distinct fixed points on $\R \times [a,b]$ may coincide when viewed as points on $\T \times [a,b]$.

This paper is organized as follows.
Section \ref{sec:pre} deals with Yu's results in \cite{Yu2022} and related remarks.
In Section \ref{sec:pf}, our main results are stated.
We first give the proof of the case of $\alpha=0$ and
then see the generalized cases.
Section \ref{sec:add} provides, as additional discussions,
a special example and the estimate of the number of the obtained infinite transition orbits.
%Section \ref{sec:example_h} provides the special case of $h$.
%In the last section, we extend the result of the infinite transition orbits to a more general representation.
%In the fourth section, we give interesting examples.

%\newpage

\section{Preliminary}
\label{sec:pre}
In this section, we would like to introduce properties of $\eqref{pre-action}$ and minimal configurations using several useful results in \cite{Yu2022}.
%Though we do not use the assumption $(h_1)-(h_6)$ directly for our proof in Section \ref{sec:pf},they are needed because we use lemmata and propositions in \cite{Yu2022}.
Moreover, we study estimates of monotone heteroclinic orbits \ (1-transition orbits).

\subsection{Properties of minimal configurations}
Let $(u^0,u^1)$ be a neighboring pair of $\calm_0^\per$ and:
\begin{align}
\label{eq:x}
\begin{split}
X &= X(u^0,u^1)= \{ x = (x_i)_{i \in \Z} \mid u^0 \le x_i \le u^1 \ ({}^{\forall} i \in \Z) \},\\
X(n)&=X(n ; u^0,u^1)=\{ x =(x_i)_{i=0}^{n}  \mid u^0 \le x_i \le u^1 \ ({}^{\forall} i \in \{0, \cdots, n\}) \}, and\\
\hat{X}(n) &= \hat{X}(n ; u^0,u^1)=\{ x =(x_i)_{i=0}^{n}   \mid x_0=x_n, \ u^0 \le x_i \le u^1 \ ({}^{\forall} i \in \{0, \cdots, n\}) \}.
\end{split}
\end{align}

\begin{defi}[\cite{Yu2022}]
For $x \in X$, we set:
\[d(x):=\max_{0 \le i \le n} \min_{j \in \{0,1\}} |x_i -u^j|.\]
For any $\delta > 0$, let:
\begin{align}
\label{phi}
\phi(\delta):= \inf_{n \in \Z^+}
\inf \left\{
\sum_{i=0}^{n-1} a_i(x) \mid x \in \hat{X}(n) \ {\text{and}} \ d(x) \ge \delta
\right\}.
\end{align}
\end{defi}

\begin{lemm}[Lemma 2.7, \cite{Yu2022}]
\label{lemm:yu_lower}
The function $\phi$ is continuous and  satisfies $\phi(\delta)>0$ if $\delta >0$; $\phi(\delta)=0$ if $\delta=0$.
It increases monotonically with respect to $\delta$.
Moreover,
for any $n \in \N$ and $x \in \hat{X}(n)$ satisfying
\[
\min_{j=0,1}  |x_i -u^j| \ge \delta,\ (i=1,\cdots,n-1),
\] 
then
\[\sum_{i=0}^{n-1} a_i(x) \ge n \phi(\delta)\]
and for any $n \in \N$ and $x \in {X}(n)$,
\end{lemm}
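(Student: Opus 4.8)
The plan is to dispatch the structural assertions in the order monotonicity, the boundary values $\phi(0)=0$ and $\phi(\delta)>0$, the linear lower bound, and finally continuity, building the two hard facts (positivity and the linear estimate) on top of the easy ones.

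\textbf{Monotonicity and the value at $0$.} Monotonicity is immediate from the nesting of the admissible sets: if $0<\delta\le\delta'$, then every loop $x$ with $d(x)\ge\delta'$ also satisfies $d(x)\ge\delta$, so $\phi(\delta')$ is an infimum over a smaller family and $\phi(\delta)\le\phi(\delta')$. For the value at $0$ I would first record the nonnegativity $\sum_{i=0}^{n-1}a_i(x)\ge 0$ for every loop $x\in\hat{X}(n)$: since $u^0$ and $u^1$ are minimal $(1,0)$-periodic configurations both realizing the minimal periodic action $c$ of \eqref{c}, minimality compares any closed configuration of period $n$ lying in $[u^0,u^1]$ against a constant periodic configuration and gives $\sum_{i=0}^{n-1}h(x_i,x_{i+1})\ge nc$. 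Hence $\phi\ge 0$; and the constant loop $x_i\equiv u^0$ (for which each $a_i=h(u^0,u^0)-c=0$ and $d(x)=0$) shows $\phi(0)\le 0$, so $\phi(0)=0$.

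\textbf{Positivity.} The first genuinely hard point is $\phi(\delta)>0$ for $\delta>0$, which I would prove by contradiction. Suppose $\phi(\delta)=0$ and choose loops $x^{(k)}\in\hat{X}(n_k)$ with $d(x^{(k)})\ge\delta$ and $\sum_{i}a_i(x^{(k)})\to 0$. Replacing each $x^{(k)}$ by an action minimizer subject to fixing its endpoints and a coordinate realizing $d\ge\delta$, I may assume each $x^{(k)}$ is a minimal segment, so Bangert's compactness of minimal configurations applies. The delicate case is $n_k$ unbounded; here I would use the linear accumulation estimate below to bound the length, or else pass to a limiting minimal configuration directly. Either way one extracts an element of $\calm_0$ contained in $[u^0,u^1]$, asymptotic to the pair, with vanishing excess action and with some coordinate at distance $\ge\delta$ from both $u^0$ and $u^1$; by the nonnegativity above this limit is a minimal $(1,0)$-periodic configuration strictly between $u^0$ and $u^1$, contradicting that $(u^0,u^1)$ is a neighboring pair of $\calm_0^\per$. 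Thus $\phi(\delta)>0$.

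\textbf{The linear lower bound, continuity, and the segment case.} For $\sum_{i=0}^{n-1}a_i(x)\ge n\phi(\delta)$ under the hypothesis $\min_{j}|x_i-u^j|\ge\delta$ on the interior coordinates, the natural route is a per-step barrier argument: show that each increment $a_i(x)=h(x_i,x_{i+1})-c$ attached to a $\delta$-far coordinate is bounded below by $\phi(\delta)$, by comparing the single transition against the optimal short loop realizing the $\delta$-barrier, and then sum over the $n$ indices. I expect the \emph{main obstacle} to be the bookkeeping at the loop's endpoints: the coordinate $x_0=x_n$ need not itself be $\delta$-far, so controlling the boundary increments and obtaining the constant exactly $n$ (rather than $n-1$) is precisely where the closure condition $x_0=x_n$ and the definition of $\phi$ as an infimum over \emph{all} lengths must be used jointly. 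Continuity then follows from monotonicity, since a monotone function has only jump discontinuities: I would rule these out by semicontinuity, using $\phi(\delta)>0$ and the accumulation estimate for lower semicontinuity, and approximating an almost-minimizing loop for $\delta$ by loops with slightly perturbed deviation (invoking the continuity of $h$) for the one-sided limits. Finally, the truncated clause concerning $x\in X(n)$ is handled by the same per-step barrier applied to open segments, the closure condition being replaced by explicit control of the two endpoints.
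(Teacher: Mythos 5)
The paper itself offers no proof of this lemma---it is quoted verbatim from \cite{Yu2022} (and the statement is visibly truncated in the source)---so the comparison below is against the standard argument that the citation points to. Your treatment of the soft parts is fine: monotonicity from nesting, $\phi\ge 0$ and $\phi(0)=0$ from the fact that closed loops in $[u^0,u^1]$ have action at least $nc$ (periodic minimizers are minimal and, by Bangert, $(1,0)$-periodic), and the positivity argument by contradiction is the right idea. Two caveats there: for unbounded lengths $n_k$ your fallback of ``using the linear accumulation estimate to bound the length'' is circular, since that estimate reads $\sum a_i\ge n\phi(\delta)$ and is vacuous precisely when $\phi(\delta)=0$; and the limit object in that case is not periodic, so the contradiction must come from Lemma \ref{lemm:positive} (equivalently $c_\ast>0$), not from the neighboring-pair property of $\calm_0^\per$ directly.

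The genuine gap is the ``per-step barrier'' you propose for the linear lower bound. The claim that each increment $a_i(x)=h(x_i,x_{i+1})-c$ attached to a $\delta$-far coordinate is itself bounded below by $\phi(\delta)$ is false: $h(x_i,x_{i+1})$ is evaluated at an off-diagonal pair, and nothing forces $h(x_i,x_{i+1})-c\ge\phi(\delta)$; indeed $h(x,y)-c$ can be negative, as the paper itself points out in Section \ref{sec:example_h} when explaining why \eqref{per_min_ineq} fails in general. Summing a false pointwise bound cannot produce the factor $n$. The factor $n$ comes instead from the \emph{closedness} of the loop via an Aubry-type exchange argument: around a closed loop the multiset of first coordinates $\{x_0,\dots,x_{n-1}\}$ coincides with the multiset of second coordinates $\{x_1,\dots,x_n\}$, so the Monge condition $(h_3)$ (optimality of the monotone, i.e.\ diagonal, coupling of a multiset with itself) yields
\[
\sum_{i=0}^{n-1} h(x_i,x_{i+1}) \;\ge\; \sum_{i=0}^{n-1} h(x_i,x_i),
\]
and each diagonal term with $x_i$ at distance $\ge\delta$ from both $u^0$ and $u^1$ satisfies $h(x_i,x_i)-c\ge\phi(\delta)$ because the length-one loop $(x_i,x_i)\in\hat X(1)$ is admissible in the infimum defining $\phi$. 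This single rearrangement step also delivers the nonnegativity you invoke earlier and is the mechanism your proposal is missing; the endpoint bookkeeping you worry about is then only the question of whether $x_0=x_n$ is itself $\delta$-far, which accounts for the discrepancy between $n\phi(\delta)$ and $(n-1)\phi(\delta)$ in the (garbled) statement.
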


\begin{lemm}[Lemma 2.8, \cite{Yu2022}]
\label{lemm:finite_bdd}
For any $n \in \N$ and $x \in {X}(n)$ satisfying $d(x) \ge \delta$,
\[
\sum_{i=0}^{n-1} a_i(x) \ge \phi(\delta)-C|x_n-x_0| \ge -C|x_n-x_0|.\]
\end{lemm}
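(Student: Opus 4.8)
The plan is to deduce this bound for an arbitrary (non-closed) configuration $x \in X(n)$ from the closed-loop quantity $\phi(\delta)$ by \emph{closing up} $x$ into an element of $\hat{X}(n)$, paying for the modification with a single Lipschitz term. First I would use that $d(x) = \max_{0 \le i \le n} \min_{j \in \{0,1\}} |x_i - u^j| \ge \delta$ and that the maximum runs over a finite index set, so there is a \emph{far point}, i.e.\ an index $i^*$ with $\min_{j} |x_{i^*} - u^j| \ge \delta$. The whole argument turns on making sure this far point survives the closing-up, which forces a small case distinction according to where $i^*$ sits.

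If the far point can be taken with $i^* \le n-1$, I define the closed configuration $\tilde{x} = (x_0, x_1, \dots, x_{n-1}, x_0) \in \hat{X}(n)$ by overwriting the last entry with $x_0$; this stays in the admissible range since $u^0 \le x_0 \le u^1$. Only the final action term changes, and by the Lipschitz estimate of Remark \ref{rem:delta_bangert}(b),
\[
a_{n-1}(\tilde{x}) - a_{n-1}(x) = h(x_{n-1}, x_0) - h(x_{n-1}, x_n) \le C|x_0 - x_n|.
\]
Because the untouched entry at $i^* \le n-1$ still witnesses $\min_j|\tilde{x}_{i^*} - u^j| \ge \delta$, we get $d(\tilde{x}) \ge \delta$, so $\tilde{x}$ is admissible in the definition of $\phi(\delta)$. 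In the remaining case the only maximizer is $i^* = n$ (which forces $n \ge 1$, hence $i^* > 0$); then I close up at the opposite end by setting $\hat{x} = (x_n, x_1, \dots, x_{n-1}, x_n) \in \hat{X}(n)$ and invoking the companion Lipschitz inequality $h(x_n, x_1) - h(x_0, x_1) \le C|x_n - x_0|$. Now the preserved endpoint $x_n$ guarantees $d(\hat{x}) \ge \delta$.

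In either case I have produced a configuration in $\hat{X}(n)$ with $d \ge \delta$ whose total action exceeds $\sum_{i=0}^{n-1} a_i(x)$ by at most $C|x_n - x_0|$. Since $\phi(\delta)$ is by definition the infimum of the action over exactly such closed configurations, this yields
\[
\phi(\delta) \le \sum_{i=0}^{n-1} a_i(x) + C|x_n - x_0|,
\]
which rearranges to the first inequality of the statement. The second inequality is then immediate from $\phi(\delta) > 0$ (Lemma \ref{lemm:yu_lower}). I do not anticipate a genuine obstacle: the only delicate point is the case distinction ensuring that the endpoint I overwrite is never the \emph{unique} far point, so that the perturbed loop still certifies $d \ge \delta$; once the Lipschitz constant $C$ from Remark \ref{rem:delta_bangert}(b) is in hand, the estimate is purely mechanical.
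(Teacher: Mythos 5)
Your proof is correct, and it is the natural argument; note that the paper itself does not reproduce a proof of Lemma \ref{lemm:finite_bdd} but defers to the cited reference, recording only that the proof ``requires $(h_3)$''. Your closing-up construction uses nothing beyond the Lipschitz bound of Remark \ref{rem:delta_bangert}(b) together with Lemma \ref{lemm:yu_lower}: you pass from $x\in X(n)$ to an element of $\hat{X}(n)$ by overwriting one endpoint, pay $C|x_n-x_0|$ for the single action term that changes, and compare with the infimum defining $\phi(\delta)$. The one delicate point is indeed the case distinction guaranteeing that the overwritten endpoint is never the unique index witnessing $d(x)\ge\delta$, and you resolve it correctly (the degenerate case $n=1$, where the closed-up loop is a constant pair, also goes through). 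The hypothesis $(h_3)$ that the paper flags enters your argument only indirectly, through the nonnegativity $\phi(\delta)\ge 0$ supplied by Lemma \ref{lemm:yu_lower}, which you need both for the second inequality of the statement and for the first one to carry content; since that lemma is quoted as a black box in this paper, channelling the structural input through it is a legitimate packaging of the proof rather than a gap.
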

\begin{proof}
See \cite{Yu2022}. This proof requires $(h_3)$.
\end{proof}

\begin{lemm}[Lemma 2.10, \cite{Yu2022}]
\label{lemm:positive}
If $x \in X$ satisfies  $|x_i - u^0|$ (resp. $|x_i - u^1|$) as $|i| \to \infty$ and $x_i \neq u^0$ (resp. $x_i \neq u^1$) for some $i \in \Z$,
then $I(x)>0$.
\end{lemm}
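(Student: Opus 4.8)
The plan is to reduce the bi-infinite sum $I(x)$ to its symmetric partial sums and feed growing finite segments into the estimate of Lemma~\ref{lemm:finite_bdd}, which already carries exactly the right boundary error term. Without loss of generality (the $u^1$ case is identical by the symmetric roles of $u^0,u^1$ and periodicity $(h_1)$), assume $|x_i-u^0|\to 0$ as $|i|\to\infty$ while $x_{i_0}\neq u^0$ for some $i_0$; since $x\in X$ we have $u^0\le x_i\le u^1$, so in fact $x_{i_0}>u^0$. I would then split into two cases according to whether $x$ ever takes a value strictly in the interior $(u^0,u^1)$.

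In the generic case there is an index $i_0$ with $u^0<x_{i_0}<u^1$. Put $\delta:=\min\{x_{i_0}-u^0,\ u^1-x_{i_0}\}>0$. For every $N\ge |i_0|$ the finite segment $(x_i)_{i=-N}^{N}$, reindexed, lies in $X(2N)$ and satisfies $d\ge \min_{j}|x_{i_0}-u^j|=\delta$, so Lemma~\ref{lemm:finite_bdd} gives
\[
S_N:=\sum_{i=-N}^{N-1}a_i(x)\ \ge\ \phi(\delta)-C\,|x_N-x_{-N}|.
\]
Because $x_N,x_{-N}\to u^0$, the right-hand side converges to $\phi(\delta)$, which is strictly positive by Lemma~\ref{lemm:yu_lower}. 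Passing to the limit, $I(x)=\lim_{N\to\infty}S_N\ge \phi(\delta)>0$. The only care needed is that $I(x)$ coincides with the limit of the symmetric partial sums; this holds because the homoclinic decay $x_i\to u^0$ forces $a_i(x)\to 0$ and makes the renormalized action well-defined, and in any event the displayed bound already yields $\liminf_N S_N\ge\phi(\delta)$.

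The remaining, degenerate case is when $x$ never takes an interior value, i.e. $x_i\in\{u^0,u^1\}$ for all $i$; then the segment distances all vanish ($d=0$), the barrier $\phi$ gives nothing, and Lemma~\ref{lemm:finite_bdd} is vacuous. Here I would compute $I(x)$ by hand. Since $x_i\to u^0$ and $x\neq u^0$, the set $\{i:x_i=u^1\}$ is finite and nonempty, consisting of $k\ge 1$ maximal blocks of consecutive indices. The pairs $(x_i,x_{i+1})$ equal to $(u^0,u^0)$ or $(u^1,u^1)$ contribute $h(u^0,u^0)-c=h(u^1,u^1)-c=0$, while each block contributes exactly one rising pair $(u^0,u^1)$ and one falling pair $(u^1,u^0)$, so
\[
I(x)=k\bigl(h(u^0,u^1)+h(u^1,u^0)-2c\bigr).
\]
Applying $(h_5)$ with $\xi=\xi'=u^0<\eta=\eta'=u^1$ and using $h(u^0,u^0)=h(u^1,u^1)=c$ shows $h(u^0,u^1)+h(u^1,u^0)-2c>\int_{u^0}^{u^1}\!\int_{u^0}^{u^1}p>0$, whence $I(x)>0$.

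I expect the main obstacle to be precisely this degenerate case: it lies outside the reach of the barrier function $\phi$ (every distance to $\{u^0,u^1\}$ is zero), so the strict positivity cannot come from Lemma~\ref{lemm:finite_bdd} and must instead be extracted directly from the crossing inequality $(h_5)$. A secondary technical point is the identification $I(x)=\lim_N S_N$ in the generic case, which amounts to the well-definedness of the renormalized action along homoclinic configurations and can be isolated as a preliminary remark.
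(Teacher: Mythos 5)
Your proof is correct, but a direct comparison is not possible in the usual sense: the paper states this lemma without any proof, importing it wholesale from Yu (Lemma 2.10 of \cite{Yu2022}), so the only benchmark is the toolkit the paper quotes, and your argument is a legitimate self-contained derivation from exactly that toolkit (Lemmas \ref{lemm:yu_lower} and \ref{lemm:finite_bdd} together with the crossing hypotheses). Two points deserve emphasis. First, your isolation of the degenerate case $x_i \in \{u^0,u^1\}$ for all $i$ is not pedantry but genuinely necessary: there $d=0$, so Lemma \ref{lemm:finite_bdd} only gives $S_N \ge -C|x_N-x_{-N}| \to 0$, i.e.\ $I(x)\ge 0$, and strict positivity must indeed come from a crossing inequality; note that for this step the weaker $(h_3)$ already suffices (take $\underline{\xi}=\underline{\eta}=u^0$ and $\bar{\xi}=\bar{\eta}=u^1$ to get $h(u^0,u^1)+h(u^1,u^0) > 2c$), so invoking $(h_5)$ is correct but stronger than needed. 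Second, on the identification $I(x)=\lim_N S_N$: the decay $a_i(x)\to 0$ that you cite does not by itself exclude conditional-convergence pathologies of the bi-infinite sum, but, as you observe, your estimate bounds $\liminf_N S_N$ from below by $\phi(\delta)$ for every sufficiently large window containing $i_0$ (Lemma \ref{lemm:finite_bdd} applies to asymmetric windows just as well), which is enough under any reasonable reading of $I$; the well-definedness itself is Yu's Proposition 2.9, the analogue for $I$ of the paper's Lemma \ref{lemm:welldefi}, so your ``preliminary remark'' caveat is correctly placed rather than a gap.
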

In using a minimizing method to get a stationary configuration,
we need to check that each component of the obtained minimizer is not equal to $u^0 $ or $u^1$.
This follows from the next lemmas.
\begin{lemm}[Lemma 2.11, \cite{Yu2022}]
\label{lemm:estimate_periodic1}
For any $\delta \in (0, u^1-u^0]$, if $(x_i)_{i=0}^{2}$ satisfies:
\begin{enumerate}
\item $x_i \in [u^0,u^1]$ for all $i=0,1,2$;
\item $x_1 \in [u^1-\delta,u^1]$, and $x_0 \neq u^1$ or $x_2 \neq u^1$; and
\item $h(x_0,x_1,x_2) \le h(x_0, \xi,x_2)$ for all $\xi \in [u^1-\delta,u^1] $,
\end{enumerate}
then $x_1 \neq u^1$.
This still holds
if we replace every $u^1$ by $u^0$ and every $[u^1-\delta,u^1]$ by $[u^0, u^0+\delta]$.
\end{lemm}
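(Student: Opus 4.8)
The plan is to argue by contradiction: assume $x_1=u^1$ and show that one can strictly decrease the action of the middle point by pushing $x_1$ slightly below $u^1$, contradicting hypothesis (3). The whole argument is local near the right endpoint $u^1$ of the admissible interval and rests on two ingredients. First, the constant configuration $u^1$ is minimal, hence stationary, because $(u^0,u^1)$ is a neighboring pair of $\calm_0^\per$ and elements of $\calm_0^\per$ are constant (being $(1,0)$-periodic); thus it satisfies \eqref{stationary}. Second, $(h_5)$ (equivalently $(\tilde h)$, i.e. $\partial_1\partial_2 h<0$) forces the partial derivatives of $h$ to be strictly monotone, and this strictness is what will convert the hypothesis ``$x_0\neq u^1$ or $x_2\neq u^1$'' into a strict inequality ruling out $x_1=u^1$.

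First I would introduce the one–variable function $\psi(\xi):=h(x_0,\xi)+h(\xi,x_2)=h(x_0,\xi,x_2)$ on $[u^1-\delta,u^1]$. By hypothesis (3), $\psi$ attains its minimum over this interval at the right endpoint $\xi=x_1=u^1$, so its left derivative there must satisfy $\psi'_-(u^1)\le 0$. (Here I invoke Mather's result, quoted above, that the one–sided derivatives $\partial_2 h$ and $\partial_1 h$ exist even when $h$ fails to be differentiable; if $h\in C^1$ this is immediate.) It therefore suffices to prove $\psi'_-(u^1)>0$, and this will be the contradiction.

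Next I would compute $\psi'_-(u^1)=\partial_2 h(x_0,u^1)+\partial_1 h(u^1,x_2)$ and subtract the stationarity equation for the constant configuration $u^1$, namely $\partial_2 h(u^1,u^1)+\partial_1 h(u^1,u^1)=0$, to obtain
\[
\psi'_-(u^1)=\big[\partial_2 h(x_0,u^1)-\partial_2 h(u^1,u^1)\big]+\big[\partial_1 h(u^1,x_2)-\partial_1 h(u^1,u^1)\big].
\]
From $(h_5)$ one reads off that $\xi\mapsto\partial_2 h(\xi,\cdot)$ and $\eta\mapsto\partial_1 h(\cdot,\eta)$ are strictly decreasing (divide the cross–difference inequality by the second increment and let it tend to $0$). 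Since $x_0\le u^1$ and $x_2\le u^1$ by hypothesis (1), both bracketed terms are $\ge 0$, and by hypothesis (2) at least one of $x_0<u^1$, $x_2<u^1$ holds, which makes the corresponding bracket strictly positive. Hence $\psi'_-(u^1)>0$, contradicting $\psi'_-(u^1)\le 0$, so $x_1\neq u^1$.

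Finally, the $u^0$ version is the mirror image: on $[u^0,u^0+\delta]$ the minimum sits at the left endpoint, so minimality forces $\psi'_+(u^0)\ge 0$, while the same monotonicity (now with $x_0,x_2\ge u^0$ and at least one strict) yields $\psi'_+(u^0)<0$, and I would simply record that the computation is identical after reflecting the inequalities. The one delicate point, and the step I expect to require the most care, is the non-differentiable case: I must make sure the stationarity equation and the strict monotonicity of the partials are read with the correct one–sided derivatives. This is exactly what Mather's theorem and $(h_5)$ supply, and with them in hand the remainder of the argument is elementary.
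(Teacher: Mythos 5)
The paper does not actually prove this lemma: its ``proof'' is the single line ``See \cite{Yu2022}. These proofs require $(h_4)$ and $(h_5)$,'' so there is no in-paper argument to match yours against, and your proposal must be judged as a self-contained proof. As such it is correct, and it is essentially the infinitesimal version of the comparison argument one would expect: assuming $x_1=u^1$, hypothesis (3) puts the minimum of $\psi(\xi)=h(x_0,\xi)+h(\xi,x_2)$ at the right endpoint, forcing $\psi'_-(u^1)\le 0$; subtracting the stationarity relation \eqref{stationary} for the constant configuration $u^1$ (valid because $u^1\in\calm_0^\per$ is minimal, hence locally minimal, and Mather's theorem supplies the one-sided derivatives and the identity $\partial_2h(u^1,u^1)+\partial_1h(u^1,u^1)=0$) reduces everything to the strict monotonicity of $\partial_2h(\cdot,u^1)$ and $\partial_1h(u^1,\cdot)$, which hypothesis (2) converts into $\psi'_-(u^1)>0$. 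Two remarks. First, you parenthetically call $(h_5)$ ``equivalently $(\tilde h)$''; Remark \ref{rem:delta_bangert}(a) states explicitly that $(\tilde h)$ is sufficient but not necessary for $(h_2)$--$(h_5)$, so this aside is inaccurate --- but harmless, since your monotonicity step uses only $(h_5)$ itself. Second, the one step that genuinely needs $(h_5)$ rather than $(h_3)$ is the survival of strictness in the limit of difference quotients: from $(h_3)$ alone the strict cross-difference inequality degenerates to $\partial_2h(x_0,u^1)\ge\partial_2h(u^1,u^1)$ after passing to the limit, which would kill the proof; the integral lower bound in $(h_5)$ gives the quantitative version
\[
\partial_2 h(x_0,u^1)-\partial_2 h(u^1,u^1)\ \ge\ \int_{x_0}^{u^1}p(t,u^1)\,dt\ >\ 0 \quad\text{when } x_0<u^1,
\]
and similarly for the other bracket. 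You gesture at exactly this (``divide the cross-difference inequality by the second increment and let it tend to $0$''), but in a written version this computation should be displayed, since it is the only place where the hypothesis ``$x_0\neq u^1$ or $x_2\neq u^1$'' actually does work.
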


\begin{lemm}[Lemma 2.12, \cite{Yu2022}]
\label{lemm:estimate_periodic2}
For any $n_0$ and $n_1 \in \N$ with $n_0 < n_1$, if a finite configuration $x=(x_i)_{i=n_0}^{n_1}$ satisfies:
\begin{enumerate}
\item $x_i \in [u^0,u^1]$ for all $i=n_0, \cdots, n_1$ and
\item for any $(y_{i})_{i=n_0}^{n_1}$ satisfying $y_{n_0}=x_{n_0}$, $y_{n_1}=x_{n_1}$, and $y_i \in [u^0,u^1]$,
\[
h(x_{n_0},x_{n_0+1}, \cdots, x_{n_1-1}, x_{n_1}) \le h(y_{n_0},y_{n_0+1}, \cdots, y_{n_1-1}, y_{n_1}),
\]
\end{enumerate}
then $x$ is a minimal configuration.
Moreover, if $x$ also satisfies $x_{n_0} \notin \{u^0,u^1\}$ or  $x_{n_1} \notin \{u^0,u^1\}$,
then $x_i \notin \{u^0,u^1\} $ for all $i=n_{0}+1, \cdots, n_1-1$.
\end{lemm}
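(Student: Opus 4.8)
The plan is to prove the two assertions in turn: first I upgrade the constrained minimality in hypothesis (2) to genuine minimality of $x$ (among \emph{all} competitors, not only those confined to $[u^0,u^1]$), and then I use the non-crossing condition $(h_4)$ to force every interior entry off $\{u^0,u^1\}$.

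The engine for the first part is the elementary cross inequality: for any two finite configurations $w=(w_i)_{i=n_0}^{n_1}$ and $z=(z_i)_{i=n_0}^{n_1}$,
\[
h\bigl(\min(w,z)\bigr)+h\bigl(\max(w,z)\bigr)\le h(w)+h(z),
\]
where $\min$ and $\max$ are taken entrywise and $h(\cdot)$ denotes the finite action $\sum_{i=n_0}^{n_1-1}h(\cdot_i,\cdot_{i+1})$. This is checked link by link: on a link where $w$ and $z$ stay ordered the two sides coincide, while on a link where they cross the recombination is exactly the ``sorted'' pairing, which $(h_3)$ makes strictly cheaper. Now take an arbitrary competitor $y$ with $y_{n_0}=x_{n_0}$, $y_{n_1}=x_{n_1}$ that need not lie in $[u^0,u^1]$. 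Applying the cross inequality with $z$ the constant configuration $u^1$ and noting that $\max(y,u^1)$ has endpoints $(u^1,u^1)$, the minimality of $u^1\in\calm_0^\per$ gives $h(u^1)\le h(\max(y,u^1))$; cancelling yields $h(\min(y,u^1))\le h(y)$. Repeating the step with $u^0$ produces the truncation $\bar y:=\mathrm{med}(u^0,y,u^1)$, which lies in $[u^0,u^1]$, keeps the endpoints $x_{n_0},x_{n_1}$ (these are already in $[u^0,u^1]$), and satisfies $h(\bar y)\le h(y)$. Hypothesis (2) then gives $h(x)\le h(\bar y)\le h(y)$, so $x$ minimizes over the whole segment among all competitors; a routine cut-and-paste argument (a cheaper configuration on some subsegment would splice into a cheaper global competitor) propagates this to every subsegment, so $x$ is minimal.

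For the second assertion, suppose toward a contradiction that $x_k=u^1$ for some interior index $k$ (the case $x_k=u^0$ is symmetric). Since $x$ is now minimal, the triple $(x_{k-1},x_k,x_{k+1})=(x_{k-1},u^1,x_{k+1})$ is a minimal segment, and so is $(u^1,u^1,u^1)$ as a restriction of the minimal configuration $u^1$; the two share the middle value $u^1$. Were they distinct, $(h_4)$ would force $(x_{k-1}-u^1)(x_{k+1}-u^1)<0$, which is impossible because $x_i\le u^1$ for all $i$ makes both factors nonpositive. Hence $x_{k-1}=x_{k+1}=u^1$, and iterating outward propagates $x_i=u^1$ all the way to the endpoints, giving $x_{n_0}=x_{n_1}=u^1$ and contradicting the assumption that $x_{n_0}\notin\{u^0,u^1\}$ or $x_{n_1}\notin\{u^0,u^1\}$.

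I expect the main obstacles to be bookkeeping rather than conceptual. The delicate points are to verify the cross inequality carefully on crossing links (this is where $(h_3)$ enters) and to track endpoints so that $\bar y$ remains an admissible competitor for hypothesis (2)---which works precisely because $x_{n_0},x_{n_1}\in[u^0,u^1]$---and, in the second part, to ensure the outward propagation reaches $n_0$ and $n_1$ (each step at an interior index $k$ produces equalities at $k\pm1$, so the induction terminates exactly at the endpoints). One should also note that, since $h$ need only be continuous, the touching-implies-coinciding step must be run through the combinatorial form of $(h_4)$ rather than through any differentiable Euler--Lagrange uniqueness statement.
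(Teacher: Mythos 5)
Your proposal is correct. Bear in mind that the paper itself contains no proof of this lemma: its ``proof'' is the single line ``See \cite{Yu2022}. These proofs require $(h_4)$ and $(h_5)$,'' so the only comparison available is with that outsourced argument, and your route is the standard Aubry--Mather one that such a proof would follow. Concretely: the entrywise $\min/\max$ rearrangement inequality, verified link by link from $(h_3)$, played against the constant configurations $u^0$ and $u^1$ (which are genuinely minimal because $(u^0,u^1)$ is a neighboring pair in $\calm_0^{\per}$, so their finite segments are minimal among \emph{unconstrained} competitors) truncates an arbitrary competitor to $\bar y=\mathrm{med}(u^0,y,u^1)$ without increasing the action; the endpoint bookkeeping works precisely because $x_{n_0},x_{n_1}\in[u^0,u^1]$ forces $\max(y,u^1)$ to have endpoints $(u^1,u^1)$ and keeps $\bar y$ admissible for hypothesis (2). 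This upgrades the constrained minimality to unconstrained minimality on the full segment, and your cut-and-paste remark correctly extends it to all subsegments, which is what the paper's definition of ``minimal'' demands. Your second half is also sound: both $(x_{k-1},u^1,x_{k+1})$ and $(u^1,u^1,u^1)$ are minimal triples sharing the middle entry, so $(h_4)$ plus $x_i\le u^1$ forces them to coincide, and the outward induction terminates exactly at $i=n_0,n_1$, contradicting the endpoint hypothesis; the $u^0$ case is symmetric via $x_i\ge u^0$. One genuine point of difference worth noting: you use only $(h_3)$ and $(h_4)$, whereas the paper flags $(h_4)$ and $(h_5)$; since the paper records that $(h_5)\Rightarrow(h_3)$, your argument is formally more economical, and the quantitative strength of $(h_5)$ (the integrated strict crossing bound) is plausibly needed for the companion Lemma \ref{lemm:estimate_periodic1} rather than for this one. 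Your closing caveat is also apt: since $h$ is merely continuous, running the touching argument through the combinatorial form of $(h_4)$, rather than any Euler--Lagrange uniqueness, is exactly the right move.
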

\begin{proof}[Proof of the two lemmas above]
See \cite{Yu2022}. These proofs require $(h_4)$ and $(h_5)$.
\end{proof}

Moreover, we can replace $\alpha=0$ with arbitrarily other rational numbers as seen below.
\begin{defi}[Definition 5.1, \cite{Yu2022}]
For $\alpha=p/q \in \Q \backslash \{0\}$, we set:
\begin{align*}
%X_{\alpha}(q;x^-,x^+)&:=\{x=(x_i)_{i=0}^{q} \mid x_i^- \le x_i \le x_i^+ (i=0, \cdots ,q)\}\\
%X_{\alpha}(q;x^-,x^+)&:=\{x \in X(q) \mid x_q =x_0 + p\}\\
X_{\alpha}(x^-,x^+)&:=\{x=(x_i)_{i \in \Z} \mid x_i^- \le x_i \le x_i^+ (i \in \Z)\}.
\end{align*}
where $x^-$ and $x^+$ is in $\calm_{\alpha}^{\per}$ and $(x^-_0,x^+_0)$ is a  neighboring pair in $\calm_{\alpha}^{\per}$.
\end{defi}

\begin{defi}[Definition 5.2, \cite{Yu2022}]
Let $h_i \colon \R^2 \to \R$ be a continuous function for $i=1,2$.
For $h_1$ and $h_2$, we define $h_1 \ast h_2 \colon \R^2 \to \R$ by
\[
h_1 \ast h_2(x_1,x_2) = \min_{\xi \in \R} (h_1(x_1, \xi) + h_2(\xi, x_2)).
\]
We call this the \it{conjunction} of $h_1$ and $h_2$.
\end{defi}

Using the conjunction, we define a function $H \colon \R^2 \to \R$ for $\alpha=p/q$  by:
\[
H(\xi,\xi') = h^{*q}(\xi,\xi'+p),
\]
where
$h^{*q}(x,y)=h_1 \ast h_2 \ast \cdots \ast h_q(x,y)$ and 
 $h_i=h$ for all $i = 1,2, \cdots , q$.

\begin{defi}[Definition 5.5, \cite{Yu2022}]
For any $y =(y_i)_{i \in \Z} \in X(x_0^-,x_0^+)$,
we define $x=(x_i)_{i \in \Z} \in X_{\alpha}(x^-,x^+)$ as follows:
\begin{enumerate}
\item $x_{iq}=y_i + ip$ and
\item $(x_j)_{j=iq}^{(i+1)q}$ satisfies
\[
h(x_{iq}, \cdots, x_{(i+1)q}) =H(x_{iq},x_{(i+1)q}) = H(y_i,y_{i+1}),
\]
i.e.,  $(x_j)_{j=iq}^{(i+1)q}$ is a minimal configuration of $h$.
\end{enumerate}
\end{defi}
Although we focus on the case of rotation number $\alpha=0$, we may apply our proof to all rational rotation numbers from the following.
%(We see the details in the next section.)
\begin{prop}[Proposition 5.6, \cite{Yu2022}]
\label{prop:alpha_configuration}
Let $y \in X(x_0^-,x_0^+)$ and $x \in X_{\alpha}(x^-,x^+)$ be defined as above.
If $y$ is a stationary configuration of $H$, 
then $x$ must be a stationary configuration of $h$.
\end{prop}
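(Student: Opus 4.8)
The plan is to verify the stationarity equation \eqref{stationary} for $x$ index by index, splitting $\Z$ into \emph{interior} indices, those $j$ with $iq<j<(i+1)q$ for some $i$, and \emph{junction} indices of the form $j=iq$. The interior indices are the easy half and require no information about $y$: by construction each block $(x_j)_{j=iq}^{(i+1)q}$ is a minimal configuration of $h$ with its endpoints $x_{iq},x_{(i+1)q}$ held fixed, hence it is locally minimal at every interior index. Mather's result quoted above then gives
\[
\partial_2 h(x_{j-1},x_j)+\partial_1 h(x_j,x_{j+1})=0
\]
for all $j$ with $iq<j<(i+1)q$, even when $h$ is only Lipschitz.

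The heart of the matter is the junction indices $j=iq$, where I would use the envelope (edge-derivative) property of the conjunction. Writing a minimizing $q$-chain realizing $h^{*q}(\xi_0,\eta_0)$ as $(w_0=\xi_0,w_1,\dots,w_{q-1},w_q=\eta_0)$, the only summand of the $q$-step action touching the right endpoint is $h(w_{q-1},\cdot)$, so differentiating the minimum in its second slot yields $\partial_2 h^{*q}(\xi_0,\eta_0)=\partial_2 h(w_{q-1},\eta_0)$, and symmetrically $\partial_1 h^{*q}(\xi_0,\eta_0)=\partial_1 h(\xi_0,w_1)$. Applied to the left block, which realizes $h^{*q}(x_{(i-1)q},x_{iq})$ and whose last edge is $(x_{iq-1},x_{iq})$, this gives $\partial_2 h^{*q}(x_{(i-1)q},x_{iq})=\partial_2 h(x_{iq-1},x_{iq})$; applied to the right block, whose first edge is $(x_{iq},x_{iq+1})$, it gives $\partial_1 h^{*q}(x_{iq},x_{(i+1)q})=\partial_1 h(x_{iq},x_{iq+1})$.

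It then remains to match these with the derivatives of $H$ that appear in the stationarity of $y$. Using the periodicity $(h_1)$ of $h$, inherited by $h^{*q}$ and hence by $H$, to realign the translated blocks (the block endpoints differ from $(y_{i-1},y_i)$ and $(y_i,y_{i+1})$ by integer multiples of $p$), together with the defining relation $H(\xi,\xi')=h^{*q}(\xi,\xi'+p)$, I would deduce
\[
\partial_2 h(x_{iq-1},x_{iq})=\partial_2 H(y_{i-1},y_i),\qquad
\partial_1 h(x_{iq},x_{iq+1})=\partial_1 H(y_i,y_{i+1}).
\]
Adding these and invoking the stationarity of $y$ for $H$ at index $i$, namely $\partial_2 H(y_{i-1},y_i)+\partial_1 H(y_i,y_{i+1})=0$, produces exactly \eqref{stationary} for $x$ at $j=iq$, which together with the interior case settles all indices.

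The step I expect to be the main obstacle is justifying the edge-derivative formula rigorously in the present setting, where $h$ need not be $C^1$ and the minimizing $q$-chain realizing $h^{*q}$ need not be unique. The corresponding upper bounds come for free by freezing the interior points and perturbing only the moving endpoint, but the matching lower bounds, and the fact that the relevant one-sided derivatives coincide, must be extracted from Mather's differentiability theorem for minimal configurations and from the strict monotonicity encoded in $(h_4)$ and $(h_5)$, which control the extreme edges of the minimizer and force the one-sided derivatives to agree.
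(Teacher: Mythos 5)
The paper itself offers no proof of this proposition; it is imported verbatim from \cite{Yu2022} (Proposition 5.6 there), so there is nothing in this manuscript to compare your argument against line by line. That said, your proposal is correct and follows the standard route one would expect (and which Yu's proof takes): interior indices $iq<j<(i+1)q$ are handled by the minimality of each block with fixed endpoints together with Mather's one-sided differentiability result, and the junction indices $j=iq$ are handled by the envelope identities $\partial_2 h^{*q}(\xi,\eta)=\partial_2 h(w_{q-1},\eta)$, $\partial_1 h^{*q}(\xi,\eta)=\partial_1 h(\xi,w_1)$ for a minimizing chain, combined with $(h_1)$ to realign the translated blocks and the stationarity of $y$ for $H$. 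You correctly flag the only delicate point, namely justifying the envelope formula when $h$ is merely Lipschitz and the minimizing chain is not unique; this is exactly where $(h_4)$--$(h_6)$ and Mather's theorem must be invoked, as you indicate.
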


\subsection{Some remarks for heteroclinic orbits}

Let $X^0$ and $X^1$ be given by:
\begin{align*}
X^0&=\{x \in X \mid |x_i - u^1| \to 0 \ (i \to \infty), |x_i - u^0| \to 0 \ (i \to -\infty)\} \ and\\
X^1&=\{x \in X \mid |x_i - u^1| \to 0 \ (i \to -\infty), |x_i - u^0| \to 0 \ (i \to \infty)\}.
\end{align*}
By considering a local minimizer (precisely, a global minimizer in $X^0$ or $X^1$),
Yu \cite{Yu2022} proved the existence of heteroclinic orbits, which Bangert showed in \cite{Bangert1988},
as per the following proposition.
\begin{prop}[Theorem 3.4 and Proposition 3.5, \cite{Yu2022}]
There exists a stationary configuration $x $ in $X^0$ (resp. $X^1$) satisfying $I(x)=c_0$ (resp. $I(x)=c_1$),
where
\[c_0=\inf_{x \in X^0} I(x), \ c_1=\inf_{x \in X^1} I(x)\]
Moreover, $x$ is strictly monotone, i.e., $x_i<x_{i+1}$ (resp. $x_i>x_{i+1}$)  for all $i \in \Z$.
\end{prop}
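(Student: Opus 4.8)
\section*{Proof proposal}

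The plan is to prove the claim for $X^0$; the statement for $X^1$ then follows by the index reflection $i \mapsto -i$, which sends $X^0$ to $X^1$ and preserves $I$ up to the relabeling of the two plateaus. I will use the direct method of the calculus of variations with the renormalized action $I$ of \eqref{pre-action} as the functional. First I would check that $c_0$ is a finite real number. For the upper bound, the step configuration $\bar x$ with $\bar x_i = u^0$ for $i \le 0$ and $\bar x_i = u^1$ for $i \ge 1$ lies in $X^0$, and since $h(u^0,u^0)=h(u^1,u^1)=c$ by \eqref{c}, every term $a_i(\bar x)$ vanishes except $a_0$, so $c_0 \le I(\bar x) = h(u^0,u^1)-c < \infty$. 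For the lower bound, any $x \in X^0$ satisfies $|x_N - x_{-N}| \le u^1-u^0$, so Lemma \ref{lemm:finite_bdd} applied on the window $[-N,N]$ gives $\sum_{i=-N}^{N-1} a_i(x) \ge -C(u^1-u^0)$, and letting $N \to \infty$ yields $I(x) \ge -C(u^1-u^0)$, hence $c_0 > -\infty$. I also record the translation symmetry $I(\sigma x)=I(x)$, where $(\sigma x)_i = x_{i+1}$, because $a_i(\sigma x)=a_{i+1}(x)$; this invariance is what will keep the transition from escaping to infinity.

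Next I would choose a minimizing sequence $x^n \in X^0$ with $I(x^n) \to c_0$, and normalize it by the shift $\sigma$ so that, writing $m^\ast = (u^0+u^1)/2$, the index $0$ is the last one at which $x^n$ lies below $m^\ast$, i.e. $x^n_0 \le m^\ast < x^n_1$. Since $X=X(u^0,u^1)$ is a closed subset of the compact product space $\prod_i [u^0,u^1]$, Tychonoff's theorem lets me extract a subsequence converging pointwise (in the sense of \eqref{t_metric}) to some $x^\ast \in X$. The normalization passes to the limit as $x^\ast_0 \le m^\ast \le x^\ast_1$, so $x^\ast$ is neither the constant $u^0$ nor the constant $u^1$.

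The heart of the argument is then to show $I(x^\ast) \le c_0$ and $x^\ast \in X^0$. Each $a_i$ is continuous because $h$ is, so every finite-window sum is continuous under pointwise convergence and $\sum_{i=-N}^{N-1} a_i(x^\ast) = \lim_n \sum_{i=-N}^{N-1} a_i(x^n)$; combined with the uniform tail lower bound furnished by Lemma \ref{lemm:finite_bdd}, this delivers the lower semicontinuity $I(x^\ast) \le \liminf_n I(x^n) = c_0 < \infty$. To see that the boundary behavior survives in the limit I would invoke the strict barrier of Lemma \ref{lemm:yu_lower}: were $x^\ast$ to fail to approach $u^0$ as $i \to -\infty$ or $u^1$ as $i \to +\infty$, there would be infinitely many disjoint windows on which $x^\ast$ stays a distance $\ge \delta$ from both plateaus, each contributing at least $\phi(\delta)>0$ to $I(x^\ast)$ and forcing $I(x^\ast)=+\infty$, a contradiction. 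The normalization $x^\ast_0 \le m^\ast \le x^\ast_1$ together with the ordering condition $(h_4)$ fixes which plateau is reached at each end, so that $x^\ast \in X^0$ with $I(x^\ast)=c_0$.

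Finally, since $x^\ast$ minimizes $I$ over $X^0$, modifying it on any finite window to a constrained minimizer with the same endpoints and values in $[u^0,u^1]$ cannot lower $I$; hence $x^\ast$ satisfies hypothesis (2) of Lemma \ref{lemm:estimate_periodic2}, so it is a minimal configuration, its interior components satisfy $x^\ast_i \notin \{u^0,u^1\}$, and by Mather's result it obeys the stationary equation \eqref{stationary}. Strict monotonicity I would obtain by comparing $x^\ast$ with its shift $\sigma x^\ast$ through the non-crossing property encoded in $(h_4)$, sharpened by the strict inequality $(h_5)$: both are minimal with the same asymptotics, and this comparison forces $x^\ast_i < x^\ast_{i+1}$ for every $i$. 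I expect the main obstacle to be the simultaneous control of lower semicontinuity and the loss of compactness at infinity---ruling out that the minimizing transition either disperses into several escaping excursions or slides off to $\pm\infty$---which is precisely where the strict barrier $\phi(\delta)>0$ of Lemma \ref{lemm:yu_lower} and the shift-normalization of the minimizing sequence are indispensable.
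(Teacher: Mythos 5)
Your overall skeleton --- shift-normalized minimizing sequence, Tychonoff compactness of $X$, continuity of finite-window sums, Lemma \ref{lemm:estimate_periodic2} plus Mather's result for stationarity, and an ordering/crossing argument for monotonicity --- is the natural direct-method route (the paper itself imports this result from \cite{Yu2022} without reproducing the proof, and that proof runs along these lines). But the heart of your argument is asserted rather than proved. Lemma \ref{lemm:finite_bdd} gives for a window $[N,M]$ only $\sum_{i=N}^{M-1} a_i(x^n) \ge -C|x^n_M-x^n_N| \ge -C(u^1-u^0)$, a uniform $O(1)$ bound; it is \emph{not} a tail bound that tends to $0$ in $N$ uniformly in $n$. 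With only this, your chain of inequalities yields $I(x^\ast) \le c_0 + 2C(u^1-u^0)$, not $I(x^\ast)\le c_0$. What lower semicontinuity actually requires is uniform localization of the transition: for every $\delta>0$ there is $N$, independent of $n$, with $|x^n_i-u^0|\le\delta$ for $i\le -N$ and $|x^n_i-u^1|\le\delta$ for $i\ge N$ along the minimizing sequence. That is precisely the escaping/multiple-excursion scenario you acknowledge only in your closing sentence; excluding it is not a side remark but the main content, and it needs the strictly positive cost of an extra pair of transitions, i.e.\ $c_\ast>0$ from Corollary \ref{co:positive_het} (or the quantitative estimates behind Lemma \ref{lemm:gap_estimate}), compared against $I(x^n)<c_0+\epsilon$. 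The barrier $\phi(\delta)>0$ and the last-crossing normalization alone do not deliver it.

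Two further steps are flawed as stated. First, your argument that a limit failing to approach the plateaus would stay a distance $\ge\delta$ from both on infinitely many windows is incorrect: $x^\ast$ could oscillate between $\delta$-neighborhoods of $u^0$ and $u^1$, and a crossing can even jump over the middle band in a single step ($x_i\le u^0+\delta$, $x_{i+1}\ge u^1-\delta$), so no index with $\min_j |x_i-u^j|\ge\delta$ need occur and Lemma \ref{lemm:yu_lower} gives nothing; the correct tool is Proposition \ref{prop:finite}, already quoted in the paper, which yields tail convergence directly from $I(x^\ast)<\infty$. Second, $(h_4)$ does not ``fix which plateau is reached at each end.'' Your normalization is in fact stronger than you use at $+\infty$: since $0$ is the \emph{last} index below $m^\ast$, $x^n_i>m^\ast$ for all $i\ge1$ passes to $x^\ast_i\ge m^\ast$ for $i\ge1$, which combined with Proposition \ref{prop:finite} forces $x^\ast_i\to u^1$ as $i\to+\infty$. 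But at $-\infty$ nothing you wrote prevents the pointwise limit from tending to $u^1$ (the transition of $x^n$ sliding to $-\infty$, leaving a dip homoclinic to $u^1$); ruling this out again requires the $c_\ast$-accounting above. Minor repairs: the reflection $i\mapsto -i$ does not preserve $I$ unless $h$ is symmetric --- it replaces $h$ by $\tilde h(\xi,\eta)=h(\eta,\xi)$, which satisfies $(h_1)$--$(h_6)$, so the $X^1$ case follows by rerunning the argument, not by invariance; and the ``moreover'' clause of Lemma \ref{lemm:estimate_periodic2} needs a window endpoint off $\{u^0,u^1\}$, which must be secured via Lemma \ref{lemm:estimate_periodic1} before you can conclude $x^\ast_i\notin\{u^0,u^1\}$ for all $i$.
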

Let:
\begin{align*}
&\calm^0(u^0,u^1)=\{x \in X \mid c_0=\inf_{x \in X^0} I(x)\} \ and\\
&\calm^1(u^0,u^1)=\{x \in X \mid c_1=\inf_{x \in X^1} I(x)\}.
\end{align*}

Set
\begin{align}
\label{cstar}
c_\ast:=I(x^0) + I(x^1),
\end{align}
where $x^i \in \calm^i(u^0,u^1)$ $(i=0,1)$.
From the above and Lemma \ref{lemm:positive}, we immediately obtain the following corollary.
\begin{co}
\label{co:positive_het}
$c_{\ast}>0$
\end{co}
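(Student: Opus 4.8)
The plan is to fuse the two monotone heteroclinics into a single orbit that is homoclinic to $u^0$, and then read off the strict positivity of $c_\ast$ from Lemma \ref{lemm:positive}, using Lemma \ref{lemm:yu_lower} to control strictness. Fix minimizers $x^0 \in \calm^0(u^0,u^1)$ and $x^1 \in \calm^1(u^0,u^1)$, so that $I(x^0)=c_0$, $I(x^1)=c_1$ and $c_\ast=c_0+c_1$. By the preceding proposition $x^0$ is strictly increasing with $x^0_i\to u^0$ as $i\to-\infty$ and $x^0_i\to u^1$ as $i\to+\infty$, while $x^1$ is strictly decreasing with $x^1_i\to u^1$ as $i\to-\infty$ and $x^1_i\to u^0$ as $i\to+\infty$. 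For $N\in\N$ I would set
\[
w^N_i=
\begin{cases}
x^0_{i+N}, & i\le 0,\\
x^1_{i-N}, & i\ge 1.
\end{cases}
\]
Then $w^N_i\to u^0$ as $|i|\to\infty$, so $w^N$ is homoclinic to $u^0$, and since $w^N_0=x^0_N\in(u^0,u^1)$ it is not the constant configuration $u^0$.

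First I would verify that $I(w^N)\to c_\ast$. Splitting the action at the junction,
\[
I(w^N)=\sum_{j\le N-1}a_j(x^0)+a_0(w^N)+\sum_{j\ge 1-N}a_j(x^1),
\]
where $a_0(w^N)=h(x^0_N,x^1_{1-N})-c\to h(u^1,u^1)-c=0$ by \eqref{c} and continuity of $h$, while the two partial sums converge to $I(x^0)=c_0$ and $I(x^1)=c_1$ because the bi-infinite series defining $I(x^0)$ and $I(x^1)$ converge and hence have vanishing tails. Thus $I(w^N)\to c_0+c_1=c_\ast$.

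Applying Lemma \ref{lemm:positive} to each $w^N$ gives $I(w^N)>0$, but passing to the limit only yields $c_\ast\ge 0$; upgrading this to strict positivity is the main obstacle, since the pointwise bounds carry no uniform gap. To close it I would extract a lower bound independent of $N$ from the excursion amplitude. Put $\delta_0:=\max_i\min_{j\in\{0,1\}}|x^0_i-u^j|$, which is positive since $x^0$ is strictly increasing with all values in $(u^0,u^1)$. For $N$ large the block $\{w^N_i\}_{i\le 0}$ reproduces the values $\{x^0_j\}_{j\le N}$, so some index attains $\min_j|w^N_i-u^j|\ge\delta_0$. Fixing such an $N$ and truncating to $(w^N_i)_{i=-M}^{M}$ with $M$ large, then resetting the two endpoints to $u^0$ (a modification whose action cost tends to $0$ as $M\to\infty$ because $w^N_{\pm M}\to u^0$), produces a closed configuration in $\hat X(2M)$ with $d\ge\delta_0$; by the definition \eqref{phi} of $\phi$ its action is at least $\phi(\delta_0)$. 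Letting $M\to\infty$ gives $I(w^N)\ge\phi(\delta_0)$ for every large $N$, and since $\phi(\delta_0)>0$ by Lemma \ref{lemm:yu_lower}, I would conclude $c_\ast=\lim_N I(w^N)\ge\phi(\delta_0)>0$. The delicate point is precisely this last step: Lemma \ref{lemm:positive} alone is not quantitative enough, so the strictness genuinely rests on the uniform loop estimate encoded in $\phi$.
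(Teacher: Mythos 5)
Your argument is correct, but it takes a genuinely different route from the paper's. The paper's proof is a short surgery with no limiting procedure: since the minimizers $x^0 \in \calm^0(u^0,u^1)$ and $x^1 \in \calm^1(u^0,u^1)$ are strictly monotone in opposite directions, they cross exactly once; setting $x^+_i=\max\{x^0_i,x^1_i\}$ and $x^-_i=\min\{x^0_i,x^1_i\}$ and applying $(h_3)$ at the crossing gives $c_\ast=I(x^0)+I(x^1)\ge I(x^+)+I(x^-)$, and Lemma \ref{lemm:positive}, applied to the two nonconstant configurations $x^+$ and $x^-$ (asymptotic to $u^1$, resp.\ $u^0$, at both ends), already yields strict positivity. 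You instead glue translates of $x^0$ and $x^1$ into the homoclinic-type configurations $w^N$, show $I(w^N)\to c_\ast$, and recover strictness in the limit through the quantitative loop estimate $\phi(\delta_0)>0$ of Lemma \ref{lemm:yu_lower} --- correctly diagnosing that Lemma \ref{lemm:positive} alone evaporates to $c_\ast\ge 0$ under the limit. Your route is longer but buys something the paper's does not: the effective lower bound $c_\ast\ge\phi(\delta_0)$ in terms of the excursion amplitude $\delta_0$ of the heteroclinic. Two points should be tightened. First, the vanishing of the one-sided tails $\sum_{j\ge N}a_j(x^0)$ and $\sum_{j\le -N}a_j(x^1)$ is not a formal consequence of convergence of a bi-infinite sum (the paper defines such sums via symmetric partial sums, cf.\ Lemma \ref{lemm:welldefi}, and a conditionally convergent bi-infinite series need not have vanishing one-sided tails); for these minimizers it follows from minimality plus Lipschitz continuity, which is exactly the content of the proof of Lemma \ref{lemm:finite_het}, so you should invoke that lemma rather than treat it as automatic. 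Second, your manual endpoint-resetting re-derives Lemma \ref{lemm:finite_bdd}: applying that lemma directly to the truncation $(w^N_i)_{i=-M}^{M}$, which satisfies $d\ge\delta_0$ and has both endpoints near $u^0$, gives $\sum_{i=-M}^{M-1}a_i(w^N)\ge\phi(\delta_0)-C\lvert w^N_M-w^N_{-M}\rvert$ at once and shortens the argument.
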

\begin{proof}
Choose  $x^0 \in \calm^0(u^0,u^1)$ and $x^1 \in \calm^1(u^0,u^1)$ arbitrarily.
From monotonicity, $x^0$ and $x^1$ intersect exactly once.
We define $x^+$ and $x^-$ in $X$ by
$x^+_i := \max\{x^0_i,x^1_i\}$ and $x^-_i := \min\{x^0_i,x^1_i\}$.
By $(h_3)$ and Lemma \ref{lemm:positive},
\[
c_{\ast}=I(x^0) + I(x^1) \ge I(x^+) + I(x^-) >0.
\]
This completes the proof.
\end{proof}

%%%%%%%%%%%%%%%%%%5
\if0
Next, we consider a minimal configuration under fixed ends.
For $0<a,b< (u^1-u^0)/2$, let:
\begin{align*}
Y^{\ast,0}(n,a,b) &= X(n) \cap \{x_0=u^0+a\} \cap  \{x_n=u^0+b\} \ and\\
Y^{\ast,1}(n,a,b) &= X(n) \cap \{x_0=u^1-a\} \cap  \{x_n=u^1-b\}
\end{align*}
and  $y^0(n,a,b)=(y^0_i) \in Y^{\ast,0}(n,a,b) $ be a finite configuration satisfying:
\[\sum_{i=0}^{n-1} {a_i}(y^0(n,a,b)) = \min_{x \in Y^{\ast,0}(n,a,b)} \sum_{i=0}^{n-1} {a_i}(x).
\]

The assertion of the next lemma may appear confusing, but it is useful for our proof in Section \ref{sec:pf}.
\begin{lemm}
\label{lemm:one_delta}
For any $m_0, m_1 \in \Z_{\ge 0} (m_0<m_1)$ and $\rho_0, \rho_1$, and $\delta$ with $\rho_0 +  \rho_1 + 2\delta <c_{\ast}/2C$,
there exists $n_0$ such that 
for all $n \ge n_0$,
there exists $l \in [m_0, n-m_1] \cap \Z$ satisfying
$|y^0_l(n,\rho_0, \rho_1) - u^0| \le \delta$.
A similar argument holds if $u^0$ and $y^0$ are replaced by $u^1$ and $y^1$.
\end{lemm}
\begin{proof}
We first consider the case $m_0=m_1=0$.
Let $z=(z_i)_{i=0}^{n}$ be given by  $z_0=y^\ast_0$, $z_n=y^\ast_n$ and $z_{i} = u^0$ otherwise.
Clearly, for any $n \in \Z_{>0}$, 
\[\sum_{i=0}^{n} a_{i}(y^\ast_l(n,\rho_0, \rho_1)) \le
\sum_{i=0}^{n} a_{i}(z) \le C(\rho_1+\rho_2) < \frac{c_{\ast}}{2}.\]
%For $i$ even, $A_i = a_i$ and 
On the other hand, Lemma \ref{lemm:finite_bdd} and the definition of $y^0$ imply that if 
$x \in X(n) \cap \{\min |x_i - u^j| \ge \delta\}$, then
\begin{align*}
\sum_{i=0}^{n} a_i(x) \ge n \phi(\delta) - C|\rho_1-\rho_0|.
\end{align*}
and $\sum_{i=0}^{n} a_i(x)>{c_{\ast}}/{2}$  for sufficiently large $n$, which is a contradiction.
The above remark implies that for sufficiently large $n$, there exists $l \in [0, n] \cap \Z$ 
such that $|y_l(n,\rho_0, \rho_1) - u^0| \le \delta$ or $|y_l(n,\rho_0, \rho_1) - u^1| \le \delta$.
That is, 
either of the following two conditions holds:
\begin{enumerate}
\item[(a)] There exists $l \in [0, n] \cap \Z$ such that $|x_l(n,\rho_0, \rho_1) - u^0| \le \delta$ and
$|x_i(n,\rho_0, \rho_1) - u^1|>\delta $ for all $i \in [0,n] \cap \Z$, or
\item[(b)] There exists $l \in [0, n] \cap \Z$ such that $|x_l(n,\rho_0, \rho_1) - u^1| \le \delta$. 
\end{enumerate}
To prove our claim for $m_0=m_1=0$, it suffices to show that the case of $(b)$  does not occur for sufficiently large $n$.
If $(b)$ holds, then by Corollary \ref{co:positive_het}, 
\begin{align*}
\sum_{i} a_i(x) > c_{\ast} -C(\rho_1+\rho_2) -2C\delta> \frac{c_{\ast}}{2},
\end{align*}
which is a contradiction. 
\if0
Next we fix $m_0,m_1 \in \Z_{>0}$ arbitrarily.
It also suffices to show that the case of $(b)$  does not occur.
By Lipschitz continuity, 
\[
\sum a_i(y^0(n,a,b)) + \sum a_i(y^0(n,a,b)) \le C(y^0_{m_0}-u^0)+ C(y^0_{n-m_1}-u^0)
\]
and if $x \in X(n) \cap \{\min |x_i - u^j| \ge \delta\}$, then
\begin{align*}
\sum_{i=m_0}^{n-m_1-1} a_i(x) \ge (n-m_1-m_0) \phi(\delta) - C|\rho_1-\rho_0|,
\end{align*}
which is contradiction for any $n$ satisfying $|n-m_1-m_0| \gg 1 $.
\fi
\end{proof}

In fact, $y^i(n,a,b)$ can be asymptotic to $u^i$ any number of times,
as per the following lemma.
\begin{lemm}
\label{lemm:finite_delta}
For any $k$, $\rho_0, \rho_1$ and $\delta$ with $\rho_0 +  \rho_1 + 2\delta <c_{\ast}/2C$,
there exists $n_0$ such that 
for all $n \ge n_0$,
there exist $l_1, \cdots, l_k \in [0, n] \cap \Z$ satisfying
$|y^0_{l_i}(n,\rho_0, \rho_1) - u^0| \le \delta$ for all $i =1, \cdots, k$.
A similar argument holds if $u^0$ and $y^0$ are replaced by $u^1$ and $y^1$.
\end{lemm}
\begin{proof}
We only discuss the case where $k=2$.
If we assume that the lemma is false,
then we set $j_1=\lfloor n/2 \rfloor$.
For some $j_0 \in  [0, n] $, there exists $x=(x_i)_{i=j_0}^{j_0+j_1}$
satisfying $|x_l(n,\rho_0, \rho_1) - u^0| > \delta$ for all $i \in [j_0,\cdots,j_0+j_1] \cap \Z$.
On the other hand, $(h_1)$ and Lemma \ref{lemm:one_delta} imply that for sufficiently large $j_1$, it holds that $\sum a_i(x) > c_{\ast}/2$,
which is a contradiction. 
Other cases are shown in the same way.
\end{proof}
\fi

\begin{lemm}
\label{lemm:finite_het}
For any $\epsilon >0$, there exist $n_0 \in \N$ and $x \in \calm^0(u^0,u^1)$ (resp. $x \in \calm^1(u^0,u^1))$ such that $\sum_{i=-n}^{n-1} a_i(x) \in (c_0 -\epsilon, c_0 + \epsilon)$ $(resp. \sum_{i=-n}^{n-1} a_i(x) \in (c_1 -\epsilon, c_1 + \epsilon))$ for all $n \ge n_0$.
\end{lemm}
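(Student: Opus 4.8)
The plan is to show that for \emph{any} $x \in \calm^0(u^0,u^1)$ the two one-sided partial sums of $I(x)$ converge; then the symmetric partial sums $\sum_{i=-n}^{n-1} a_i(x)$ converge to $I(x)=c_0$, and the asserted $\epsilon$-statement is nothing but the definition of this limit, with $n_0=n_0(\epsilon)$ the rank from which the symmetric partial sum stays within $\epsilon$ of $c_0$. Such an $x$ exists and, by the existence-and-monotonicity proposition stated above (\cite{Yu2022}), is strictly monotone with $x_i\to u^0$ as $i\to-\infty$ and $x_i\to u^1$ as $i\to+\infty$. I will treat the $\calm^0$ case; the $\calm^1$ case is identical after exchanging the roles of $u^0$ and $u^1$. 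Note in particular that the choice of $x$ is immaterial: any minimizer works, so the existential ``there is an $x$'' is satisfied by picking any element of $\calm^0(u^0,u^1)$.

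First I would establish a Cauchy estimate for the forward sums $F_n:=\sum_{i=0}^{n-1}a_i(x)$. For $0\le m<n$ (with $n\ge m+2$; the single-term gap $n=m+1$ is bounded directly) the segment $(x_i)_{i=m}^{n}$ is minimal among configurations in $[u^0,u^1]$ with the same endpoints, so I may bound $\sum_{i=m}^{n-1}a_i(x)$ from above by the action of the comparison configuration $y$ with $y_m=x_m$, $y_n=x_n$, and $y_i=u^1$ for $m<i<n$. All interior terms vanish because $h(u^1,u^1)=c$, leaving $\sum_{i=m}^{n-1}a_i(x)\le [h(x_m,u^1)-c]+[h(u^1,x_n)-c]$, which by the Lipschitz bound of Remark~\ref{rem:delta_bangert}(b) is at most $C\bigl(|x_m-u^1|+|x_n-u^1|\bigr)$.

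For the matching lower bound I would invoke Lemma~\ref{lemm:finite_bdd} on the window $[m,n]$: since $x$ is strictly monotone the restriction has positive $d$, so $\sum_{i=m}^{n-1}a_i(x)\ge -C|x_n-x_m|\ge -C\bigl(|x_m-u^1|+|x_n-u^1|\bigr)$. Combining the two bounds gives $|F_n-F_m|\le C\bigl(|x_m-u^1|+|x_n-u^1|\bigr)\to 0$ as $m,n\to\infty$, because $x_i\to u^1$; hence $(F_n)$ is Cauchy and converges to some $F_\infty$. The same argument, using comparison paths through $u^0$ and the estimates near $u^0$, shows the backward sums $B_m:=\sum_{i=-m}^{-1}a_i(x)$ converge to some $B_\infty$. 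Then $\sum_{i=-n}^{n-1}a_i(x)=B_n+F_n\to B_\infty+F_\infty$, and since this symmetric limit is by construction $I(x)=c_0$, the lemma follows by taking $n_0$ so large that $|B_n+F_n-c_0|<\epsilon$ for $n\ge n_0$.

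The main obstacle is the lower bound: the terms $a_i(x)$ need not be nonnegative (they are negative precisely because $x$ is not constant), so a priori the partial sums could oscillate and fail to converge. This is exactly where Lemma~\ref{lemm:finite_bdd}, and through it hypothesis $(h_3)$, is essential. A secondary point to verify carefully is the admissibility of the comparison configuration (it lies in the relevant $X(\cdot)$ and shares endpoints with the minimal segment) and the identification of the symmetric limit with $c_0$, which rests on $I(x)$ being the sum of its two convergent one-sided tails; Lemma~\ref{lemm:positive} guarantees moreover that this common value $c_0$ is strictly positive, consistent with both transitions being genuinely present.
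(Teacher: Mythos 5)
Your argument is correct and is essentially the paper's own proof: both control the tails $\sum_{i\ge n}a_i(x)$ and $\sum_{i<-n}a_i(x)$ by $C$ times the distances of $x_n$ and $x_{-n}$ from $u^1$ and $u^0$, using a comparison configuration resting at $u^1$ (resp.\ $u^0$) for the upper bound and the lower bound of Lemma \ref{lemm:finite_bdd}, the only difference being that the paper states the tail estimate directly while you phrase it as a Cauchy criterion for the one-sided partial sums. (Your closing appeal to Lemma \ref{lemm:positive} is inessential --- and strictly speaking that lemma concerns configurations tending to the \emph{same} $u^j$ at both ends, so it does not apply to the heteroclinic $x$ --- but nothing in your proof depends on it.)
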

\begin{proof}
For sufficiently large  $n_0$,
there exists $y \in \calm^0$ such that for any $n \ge n_0$,
\[
y_{-n}-u^0 < \epsilon/2C, \text{and} \ u^1-y_n< \epsilon/2C.
\]
Since $c_0=\sum_{i \in \Z} a_i(y)$
by the minimality of $y$,
we get:
\begin{align*}
\left| \sum_{i=-n}^{n-1} a_i(y) -c_0 \right|
= \left| \sum_{i <-n} a_i(y) +  \sum_{i \ge n} a_i(y) \right|
\le C( (y_{-n}-u^0) + (u^1-y_n))
<  \epsilon
\end{align*}
as desired.
A similar way is valid for the rest of the proof.
\end{proof}
We will check the properties of the `pseudo' minimal heteroclinic orbits.
Under the assumption $\eqref{hetero_gap}$, the following lemma holds.
\begin{lemm}[Proposition 4.1, \cite{Yu2022}]
\label{lemm:gap_estimate}
Assume $\eqref{hetero_gap}$ holds.
For any $\epsilon>0$, there exist $\delta_i \in (0, \epsilon)$ $(i=1,2,3,4)$ and positive constants  $e_0=e_0(\delta_1,\delta_2)$ and $e_1=e_1(\delta_3,\delta_4)$ satisfying:
\begin{align*}
\inf\{ I(x) \mid x \in X^0, x_0 =u_0+\delta_1  \text{ or }  x_0=u_1-\delta_2\} &= c_0+e_0 \ and\\
\inf\{ I(x) \mid x \in X^1, x_0 =u_1-\delta_3  \text{ or }  x_0=u_0+\delta_4\} &= c_1+e_1.
\end{align*}
\end{lemm}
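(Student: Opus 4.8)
The plan is to reduce the statement to a structural fact about the set $I^+_0(u^0,u^1)$ of initial values of minimal heteroclinics, and then to exploit shift invariance. For $v\in[u^0,u^1]$ set $\beta_0(v):=\inf\{I(x)\mid x\in X^0,\ x_0=v\}$, so that $\beta_0(v)\ge c_0$. First I would show that this constrained infimum is attained inside $X^0$: take a minimizing sequence, pass to a pointwise limit (the ambient set $[u^0,u^1]^{\Z}$ is compact in the topology $\eqref{t_metric}$), and use the tail estimates of Section~\ref{sec:pre} (Lemma~\ref{lemm:finite_bdd}, Lemma~\ref{lemm:positive} and Proposition~\ref{prop:finite}) exactly as in the proof that $\calm^0(u^0,u^1)\ne\emptyset$, to guarantee that the limit is again heteroclinic and that $I$ is lower semicontinuous. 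Consequently $\beta_0(v)=c_0$ precisely when $v$ is the initial value of a global minimizer, i.e. $\beta_0^{-1}(c_0)=I^+_0(u^0,u^1)$, and $\beta_0(v)>c_0$ for every $v\notin I^+_0(u^0,u^1)$. It therefore suffices to produce, for each $\epsilon>0$, one value $u^0+\delta_1\in(u^0,u^0+\epsilon)$ and one value $u^1-\delta_2\in(u^1-\epsilon,u^1)$ lying in the complement $(u^0,u^1)\setminus I^+_0(u^0,u^1)$; then $e_0:=\min\{\beta_0(u^0+\delta_1),\beta_0(u^1-\delta_2)\}-c_0>0$ does the job, and the $X^1$/$I^-_0$ side is identical after time reversal.

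The gap hypothesis $\eqref{hetero_gap}$ supplies a single gap: there is an open interval with $(p,q)\cap I^+_0(u^0,u^1)=\emptyset$. Since the values of any one minimal heteroclinic are strictly monotone and accumulate at both $u^0$ and $u^1$, a maximal such gap has endpoints $u^0<p<q<u^1$ with $p,q\in\overline{I^+_0(u^0,u^1)}$. The central idea is to spread this one gap toward both endpoints using the shift. Because $I$ and $X^0$ are invariant under $(\sigma x)_i=x_{i+1}$, the set $\calm^0(u^0,u^1)$ of global minimizers is shift invariant; in particular no minimizer ever takes a value in $(p,q)$. I would then verify that through each $v\in I^+_0(u^0,u^1)$ there passes a \emph{unique} global minimizer: a min/max comparison via $(h_3)$/$(h_5)$ forces two competitors through $v$ to coincide, using stationarity $\eqref{stationary}$ and the strict monotonicity of $\partial_1 h(v,\cdot)$ coming from $(h_5)$. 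Hence $T(v):=x_1$ is well defined, and the same min/max argument (ruling out $x_1\ge x_1'$ when $x_0<x_0'$) shows that $T$ extends to a strictly increasing homeomorphism of $\overline{I^+_0(u^0,u^1)}$ onto itself, fixing $u^0,u^1$ and satisfying $T(v)>v$ in the interior.

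Finally I would run the dynamics of $T$. Every interior orbit satisfies $T^{n}v\to u^1$ and $T^{-n}v\to u^0$, and $T$ carries complementary gaps to complementary gaps bijectively; therefore the forward iterates $T^{n}\big((p,q)\big)$ are gaps collapsing onto $u^1$, while the backward iterates $T^{-n}\big((p,q)\big)$ are gaps collapsing onto $u^0$. In particular, for any $\epsilon>0$ some $T^{-N}\big((p,q)\big)\subset(u^0,u^0+\epsilon)$ and some $T^{M}\big((p,q)\big)\subset(u^1-\epsilon,u^1)$, which yields $\delta_1,\delta_2\in(0,\epsilon)$ with $u^0+\delta_1,\ u^1-\delta_2\notin I^+_0(u^0,u^1)$, completing the argument; $\delta_3,\delta_4$ and $e_1$ follow verbatim for $X^1$ from the second half of $\eqref{hetero_gap}$.

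I expect the main obstacle to be the construction and control of $T$: establishing uniqueness of the minimizer through a prescribed initial value and its strict monotonicity, so that $T$ is a genuine order-preserving homeomorphism permuting the gaps, is exactly where $(h_4)$--$(h_5)$, stationarity, and the non-crossing of action-$c_0$ configurations enter. It is this step, rather than the softer compactness and attainment arguments, that turns a single gap into gaps accumulating at \emph{both} endpoints.
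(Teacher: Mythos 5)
The paper does not actually prove this lemma: it is quoted as Proposition 4.1 of \cite{Yu2022} and followed by ``We omit the proof,'' so there is no in-paper argument to compare against. Judged on its own, your reconstruction is sound and is, in structure, the expected one: (i) reduce the claim to producing points of $(u^0,u^1)\setminus I^+_0(u^0,u^1)$ within $\epsilon$ of \emph{both} endpoints, plus strict positivity of the constrained infimum off $I^+_0(u^0,u^1)$; (ii) propagate the single gap furnished by \eqref{hetero_gap} to both endpoints using shift-invariance of $\calm^0(u^0,u^1)$ and the strict monotonicity of minimal heteroclinics. For (ii) you do not really need $T$ to extend to a homeomorphism of the closure: the map $v=x_0\mapsto x_1$ is just the shift read off on initial values, its well-definedness and strict monotonicity are exactly the total ordering of $\calm^0(u^0,u^1)\cup\{u^0,u^1\}$ coming from $(h_4)$/$(h_5)$, and an order-preserving bijection of the closed set $I^+_0(u^0,u^1)$ already permutes complementary gaps, whose endpoints converge to $u^1$ forward and $u^0$ backward because they lie on heteroclinics. (Your reading that the relevant condition is $u^0+\delta_1,\,u^1-\delta_2\notin I^+_0(u^0,u^1)$ is the correct one; it matches condition $(p_1)$ in Step 1 of the main proof.)

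The one place where your sketch leans on ``softer compactness'' than is actually available is step (i). Since $h(x,y)-c$ need not be nonnegative (see Section \ref{sec:example_h}), $I$ is not obviously lower semicontinuous, and a minimizing sequence in $X^0$ pinned at $x_0=v$ can a priori degenerate to a limit converging to $u^0$ at both ends or to $u^1$ at both ends. Ruling this out, and getting $\beta_0(v)>c_0$ for $v\notin I^+_0(u^0,u^1)$, requires the quantitative surgery estimates (Lemma \ref{lemm:finite_bdd}, Lemma \ref{lemm:positive}, Corollary \ref{co:positive_het} and the Lipschitz bound of Remark \ref{rem:delta_bangert}(b)): one cuts the competitor near $u^0$ or $u^1$ and shows a degenerate limit would force $I(x^n)\ge I(x)+c_0-o(1)$ with $I(x)>0$, a contradiction. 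You gesture at this correctly by citing the proof that $\calm^0(u^0,u^1)\neq\emptyset$, but it is worth recording that this, together with the total-ordering input for uniqueness, is where all the hypotheses beyond \eqref{hetero_gap} are consumed.
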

We omit the proof.
As a result, we need to choose each $\delta_i$ small enough satisfying $\delta_1, \delta_2 \in I^+_{0}(u^0,u^1)$ and $\delta_3, \delta_4 \in I^-_{0}(u^0,u^1)$.
It is immediately shown that:
\begin{lemm}
\label{lemm:finite_fake_het}
Let $x \in X^0$ (resp. $x \in X^1)$ be satisfy $I(x)=c_0+e_0$ (resp. $I(x)=c_1+e_1$).
Then,
for any $\epsilon >0$, there exist $n_0 \in \Z_{\ge 0}$ such that $\sum_{i=-n}^{n-1} a_i(x) \in (c_0+e_0 -\epsilon, c_0+e_0 + \epsilon)$
$(resp. \sum_{i=-n}^{n-1} a_i(x) \in (c_1+e_1 -\epsilon, c_1+e_1 + \epsilon))$ for all $n \ge n_0$.
\end{lemm}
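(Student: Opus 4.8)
The plan is to imitate the proof of Lemma \ref{lemm:finite_het} verbatim, replacing the genuine minimizer $y \in \calm^0$ by the constrained minimizer $x$ and the value $c_0$ by $c_0+e_0$; the $X^1$ case is entirely symmetric and I would only remark that it follows by interchanging the roles of $u^0$ and $u^1$. The one fact that drives everything is that $I(x)=c_0+e_0$ is \emph{finite}, so the doubly-infinite series $\sum_{i\in\Z}a_i(x)$ converges and its two tails must vanish; consequently the symmetric partial sum $\sum_{i=-n}^{n-1}a_i(x)$ approaches $I(x)=c_0+e_0$, which is exactly the assertion.

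First I would exploit $x\in X^0$ to fix $n_0$: since $x_{-n}\to u^0$ and $x_n\to u^1$, I choose $n_0\ge 1$ so that $|x_{-n}-u^0|<\epsilon/2C$ and $|u^1-x_n|<\epsilon/2C$ for every $n\ge n_0$, where $C$ is the Lipschitz constant of Remark \ref{rem:delta_bangert}(b). Using $I(x)=c_0+e_0$ I then write the deviation of the partial sum from its limit as the two tails,
\[
\sum_{i=-n}^{n-1}a_i(x)-(c_0+e_0)=-\Bigl(\sum_{i<-n}a_i(x)+\sum_{i\ge n}a_i(x)\Bigr),
\]
and reduce the lemma to bounding $\bigl|\sum_{i\ge n}a_i(x)\bigr|\le C|u^1-x_n|$ and $\bigl|\sum_{i<-n}a_i(x)\bigr|\le C|x_{-n}-u^0|$.

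For the upper estimate of the right tail I would compare $x$ with the competitor that agrees with it up to index $n$ and then jumps to $u^1$; since $n\ge 1$ this competitor still lies in $X^0$ and coincides with $x$ at $i=0$, so the constrained minimality of $x$ applies and yields $\sum_{i\ge n}a_i(x)\le h(x_n,u^1)-c\le C|u^1-x_n|$. For the matching lower estimate I would splice the constant $u^1$ onto the left of the tail, obtaining a configuration in $X$ asymptotic to $u^1$ at both ends and not identically $u^1$; Lemma \ref{lemm:positive} then forces its action to be positive, which rearranges to $\sum_{i\ge n}a_i(x)\ge c-h(u^1,x_n)\ge -C|u^1-x_n|$. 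The same two comparisons with $u^0$ control the left tail, and summing the bounds gives the deviation $<\epsilon$. The step that needs the most care is precisely this tail estimate for the \emph{constrained} minimizer rather than a genuine minimal configuration: one must verify that each comparison configuration stays in $X^0$ and respects the single constraint at $i=0$ (so that minimality of $x$ is still usable), and that the splicing argument meets the hypotheses of Lemma \ref{lemm:positive}. Once these admissibility checks are in place, the conclusion follows exactly as in Lemma \ref{lemm:finite_het}.
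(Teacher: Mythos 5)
Your proposal is correct and matches the paper's intent: the paper gives no separate proof (it introduces the lemma with ``It is immediately shown that''), the expected argument being exactly the one you carry out, namely the proof of Lemma \ref{lemm:finite_het} with $c_0$ replaced by $c_0+e_0$ and the tail bounds $\bigl|\sum_{i\ge n}a_i(x)\bigr|\le C|u^1-x_n|$, $\bigl|\sum_{i<-n}a_i(x)\bigr|\le C|x_{-n}-u^0|$ obtained by the comparison/splicing arguments you describe. Your explicit admissibility checks for the constrained minimizer (competitors staying in $X^0$ and preserving the value at $i=0$) are a welcome elaboration of a step the paper leaves implicit.
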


%\newpage

\section{The proofs of our main theorem and some remarks}
\label{sec:pf}
\subsection{Variational settings}
\label{subsec:setting}
%Set  $\tilde{I}_0:=(u_0,u_1) \backslash I_0$ and  $\tilde{I}_1:=(u_0,u_1) \backslash I_1$.
%We assume a gap condition for periodic and heteroclinic configurations, i.e., $\tilde{I}_0$ and $\tilde{I}_1$ are nonempty sets.
Let $(u^0,u^1)$ be a neighboring pair of $\calm^\per_{0}$ and set:
\begin{align}
\label{def:kp}
\begin{split}
K&=\left\{ k=(k_i)_{i \in \Z} \subset \Z \mid k_0=0,  k_i < k_{i+1} \right\}, and:\\
\calc(n;a,b)&=\min  \left\{ \sum_{i=0}^{n-1} h(x_i,x_{i+1}) \mid {x \in X(n),x_0=a,x_n=b} \right\}.
\end{split}
\end{align}
 (See $\eqref{eq:x}$ for the definition of $X$.)
For $k \in K$, 
set $I_i = [k_i, k_{i+1}-1] \cap \Z$
and
 $|I_i|=|k_i-k_{i+1}|$
 for each $i \in \Z$.
 Now we define the renormalized function $J$ by:
\[
J(x)=J_k(x)=\sum_{j \in \Z} A_j(x),
\]
where
$A_j(x)=h(x_j,x_{j+1}) -c(j)$ and:
\begin{align*}
c(j) =
\begin{dcases}
%\calc(|I_{2i}|,u^i,u^i)/|I_{2i}|& \ (j \in I_{2i} )\\
\frac{\calc(|I_{2i+1}|,u^i,u^{i+1})}{|I_{2i+1}|}&\ (j \in I_{2i+1} \ \text{for some} \ i \in \Z )\\
\frac{\calc(|I_{2i}|,u^i,u^i)}{|I_{2i}|}  &\ (\text{otherwise})
\end{dcases}
.
\end{align*}

\begin{rem}
$(a)$ 
The existence of the minimum value $\calc(n;a,b)$ is guaranteed by $(h_2)$.
$(b)$ Theorem 5.1  in \cite{Bangert1988}
shows that $x \in \calm_{\alpha}^{\per}$ has minimal period $(q,p)$ with $q$ and $p$ relatively prime.
It indicates that:
\[
c =\frac{\calc(|I_{2i}|,u^0,u^0)}{|I_{2i}|}=
\frac{\calc(|I_{2i}|,u^1,u^1)}{|I_{2i}|}.
\]
\end{rem}
Next, we set:
\begin{align*}
P&=\left\{ \rho=(\rho_i)_{i \in \Z} \subset \R_{>0} \mid 0 <\rho_i <\frac{u^1-u^0}{2} \ ({}^{\forall} i \in \Z), \  \sum_{i \in \Z} \rho_i <\infty  \right\}.
\end{align*}
For $k \in K$ and $\rho \in P$, the set $\xkr$ is given by:
\[
%\xkr =\{ x \in \R^\Z  \mid  \text{$0 \le  x_{k_i} - u_{k_i}^0 \le \rho_i$ if $i \equiv 0, 1$  and  $0 \le  u_{k_i}^1 - x_{k_i} \le \rho_i$ if $i \equiv -1, 2$} \}
\xkr =
 \bigcap_{i \in \Z}
 \left\{
\left( \bigcap_{i \equiv 0,1}  Y^0(k_i,\rho_i) \right) \cap \left( \bigcap_{i \equiv -1,2}  Y^1(k_i,\rho_i) \right)
\right\}
,
\]
where
\[
Y^j(l,p) = \{x \in X \mid |x_l - u^j| \le p \} \ (j=0,1)
\]
 and $ a \equiv b$ means $a \equiv b \ (\mathrm{mod} \ 4)$.
 (See $\eqref{eq:x}$ for the definition of $X(n)$.)
It is easily seen that each element of $\xkr$ has infinite transitions.
\begin{figure}[htbp]
    \centering
    \includegraphics[width=8.5cm, angle=-90]{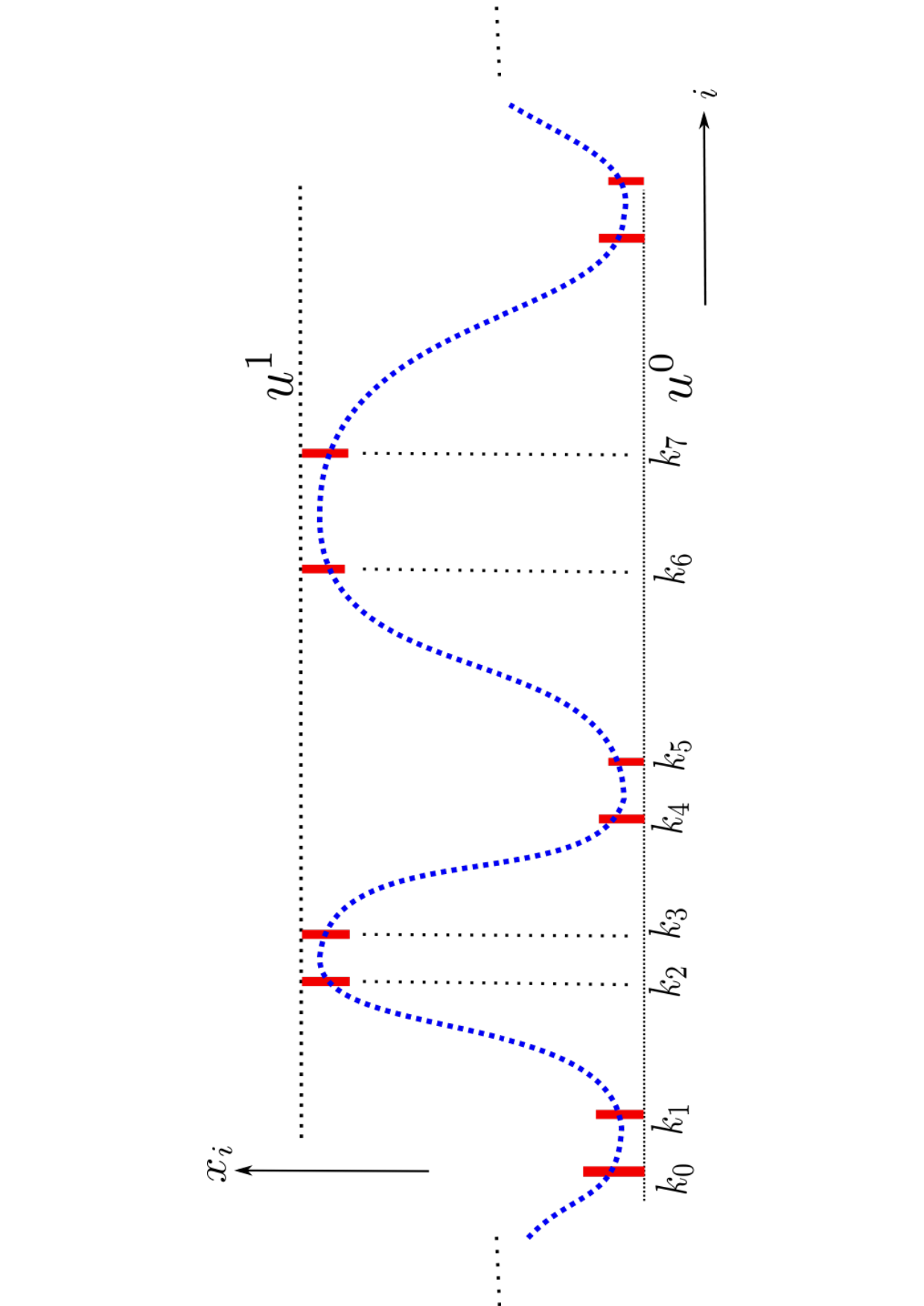}
  \caption{An element of $\xkr$}
\end{figure}
Notice that since compactness and sequential compactness are equivalent in the presence of the second countability axiom,
$X$ is a  sequentially compact set by Tychonoff's theorem.
Clearly, $\xkr$ is a closed subset of $X$, so the set $\xkr$ is also sequentially compact.

\if0
\begin{lemm}
\label{lemm:cpt}
The set $X$  is sequentially compact.
\end{lemm}
\begin{proof}
By Tychonoff's theorem, $X$ is a compact set.
Since a compact set on a metric space is sequentially compact,
we only need to define the metric on $X$ as
\begin{align*}
d(x,y) &= \sum_{i \in \Z} \frac{|x_i - y_i|}{2^{|i|}}.
\end{align*}

It suffices to check that $X$ is metrizable.
Let $d \colon X \times X  \to \R$ be given by
\begin{align*}
d(x,y) &= \sum_{i \in \Z} \frac{|x_i - y_i|}{2^{|i|}}
\end{align*}
Clearly, $d$ is a metric function.
We show that convergence on  $d$ and  $\eqref{t_metric}$ is equivalent.
Since for all $i \in \Z$
\[
\frac{|x_i - y_i|}{2^{|i|}} \le  d(x,y),
\]
it is sufficient to show that for each $x,y \in X$, the function $d(x,y)$ goes to $0$ if $\eqref{t_metric}$ holds.
Let $(x^n)$ be a convergence sequence to $y$.
There is a constant $M >0$ such that for all $j \in \Z$ 
\begin{align*}
d(x^{n},y) \le  c(j,n)+ \frac{M}{2^{|j|}}
\end{align*}
where
$c(j,n) =  \sum_{i \le |j|} {|x_i^{n} - y_i|}/{2^{|i|}}$.
Notice that for each $j \in \Z$, $c(j,n) \to 0$ as $n \to \infty$.
Thus, for any $\epsilon > 0$, we can take $i_0$ and $n_0$ such that
$ {M}/{2^{|i_0|}}< {\epsilon}/{2}$ and  $c(i_0,n_0)  < {\epsilon}/{2}$,
thus completing the proof.

\end{proof}
\fi

%Notice that $(h_1)$ implies that the values of $c^{\pm}_i$ depend on $\rho_i, \rho_{i+1}$ and $k_{i+1}-k_i$.

As a basic property of $J$, 
we first show that $J(x)$ can be finite unlike $I(x)$ even if $x$ is an infinite transition orbit.
\begin{lemm}
\label{lemm:upper}
If $\rho \in P$, then there exists $y=(y_i)_{i \in \Z} \in \xkr$ such that $J(y) =0$ for all $k \in K$.
\end{lemm}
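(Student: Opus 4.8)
The plan is to build $y$ by hand, block by block along the partition $\{I_\ell\}_{\ell\in\Z}$ of $\Z$, so that every block contributes exactly $0$ to $J$. First I fix the values at the nodes $k_i$: I set $y_{k_i}=u^0$ when $i\equiv 0,1\ (\mathrm{mod}\ 4)$ and $y_{k_i}=u^1$ when $i\equiv 2,3\ (\mathrm{mod}\ 4)$. Since this makes $|y_{k_i}-u^j|=0\le\rho_i$ at each node $k_i$ carrying a $Y^j$-constraint, the membership $y\in\xkr$ will hold as soon as the remaining entries stay in $[u^0,u^1]$; note that only $\rho_i>0$ (part of $\rho\in P$) is used here, not summability.

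Next I fill in the bonds on each block $I_\ell=[k_\ell,k_{\ell+1}-1]$, following the two cases in the definition of $c(j)$ and writing $u^i:=u^{i\bmod 2}$. A staying block $I_{2i}$ has both endpoints (at the nodes $k_{2i}$ and $k_{2i+1}$) anchored at the same value $u^i$, and there I take the constant segment $y_j\equiv u^i$; recalling $h(u^0,u^0)=h(u^1,u^1)=c$ from \eqref{c} together with $c=\calc(|I_{2i}|,u^i,u^i)/|I_{2i}|$, this gives $A_j(y)=h(u^i,u^i)-c=0$ for every $j\in I_{2i}$. A transition block $I_{2i+1}$ has its endpoints (at $k_{2i+1}$ and $k_{2i+2}$) anchored at $u^i$ and $u^{i+1}$, and there I take any segment realizing $\calc(|I_{2i+1}|,u^i,u^{i+1})$; such a minimizer exists because the minimum defining $\calc$ is attained (by $(h_2)$) and automatically lies in $[u^0,u^1]$, since $\calc$ is a minimum over $X(|I_{2i+1}|)$. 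Because consecutive segments agree at the shared node, they glue to a single $y=(y_i)_{i\in\Z}\in X$, and by the previous paragraph $y\in\xkr$.

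The block computation is then immediate: on any block $I_\ell$ the subtracted constants sum to $\sum_{j\in I_\ell}c(j)=|I_\ell|\cdot\bigl(\calc(|I_\ell|,\cdot,\cdot)/|I_\ell|\bigr)=\calc(|I_\ell|,\cdot,\cdot)$, which is exactly the action $\sum_{j\in I_\ell}h(y_j,y_{j+1})$ realized by the chosen segment; hence $\sum_{j\in I_\ell}A_j(y)=0$ for every $\ell$.

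The one point requiring care is the meaning of the doubly infinite series $J(y)=\sum_{j\in\Z}A_j(y)$ and why it equals the sum of the block contributions. This is clean in our construction: on every staying block $A_j(y)$ vanishes term by term, while each transition block is finite with block-sum $0$, so the partial sums taken along node boundaries $\sum_{j=k_{-n}}^{k_n-1}A_j(y)=\sum_{\ell=-n}^{n-1}\sum_{j\in I_\ell}A_j(y)$ are identically $0$; letting $n\to\infty$ (so that $k_{-n}\to-\infty$ and $k_n\to+\infty$) yields $J(y)=0$. This termwise cancellation is exactly the normalization effect that keeps $J$ finite on infinite-transition configurations, in contrast to $I$, which Proposition \ref{prop:finite} forces to be $+\infty$. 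I expect this grouping-and-convergence bookkeeping to be the only genuine (and minor) obstacle; the interval-by-interval algebra itself is routine.
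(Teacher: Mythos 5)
Your construction is essentially identical to the paper's proof: choose $y$ block by block so that on each block the action $\sum_j h(y_j,y_{j+1})$ realizes the corresponding minimum $\calc$, making every block-sum of $A_j$ vanish and hence $J(y)=0$. You merely spell out details the paper leaves implicit (the node values, existence of the block minimizers via $(h_2)$, membership in $\xkr$, and the grouping of the doubly infinite sum), all of which check out.
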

\begin{proof}
By the definition of $J$, we can choose a configuration $y \in \xkr$ satisfying
 $\sum_{i=k_j}^{k_{j+1}}A_i(y)=0$ for all $j \in \Z$ by
 taking $y$ such that it satisfies $\sum_{i=k_j}^{k_{j+1}}h(y_i,y_{i+1})=\calc(|I_{2j+1}|,u^j,u^{j+1})$
 or $\sum_{i=k_j}^{k_{j+1}}h(y_i,y_{i+1})=\calc(|I_{2j}|,u^j,u^{j})$.
\end{proof}

The above lemma implies that $J$ overcomes the problem referred to in Proposition \ref{prop:finite}.
Next, we show that $J$ is bounded below.

\begin{lemm}
\label{lemm:lower}
If $\rho \in P$, then there is a constant $M \in \R$ such that $J(x) \ge M (> -\infty)$ for all $x \in \xkr$.
\end{lemm}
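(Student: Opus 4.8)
The plan is to exploit the block structure that is built into $J$. Partition $\Z$ into the consecutive blocks $I_m=[k_m,k_{m+1}-1]\cap\Z$ and write $J(x)=\sum_{m\in\Z}S_m(x)$ with $S_m(x)=\sum_{j\in I_m}A_j(x)$. By the very definition of $c(j)$ the normalization is constant on each block, and $\lvert I_m\rvert\,c(j)=\calc(\lvert I_m\rvert;u^{a},u^{b})$ for the appropriate reference pair $u^{a},u^{b}\in\{u^0,u^1\}$ (namely $u^{a}=u^{b}=u^{i}$ on an even block $I_{2i}$, and $u^{a}=u^{i}\neq u^{i+1}=u^{b}$ on an odd block $I_{2i+1}$). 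Hence $S_m(x)=\sum_{j\in I_m}h(x_j,x_{j+1})-\calc(\lvert I_m\rvert;u^{a},u^{b})$, and it suffices to bound each $S_m$ below by a summable quantity.

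First I would bound a single block. Since $x\in\xkr\subset X$, the finite segment $(x_j)_{j=k_m}^{k_{m+1}}$ is an admissible competitor in the minimization defining $\calc$ with its own endpoints, so $\sum_{j\in I_m}h(x_j,x_{j+1})\ge\calc(\lvert I_m\rvert;x_{k_m},x_{k_{m+1}})$. It then remains to compare $\calc$ at the true endpoints $x_{k_m},x_{k_{m+1}}$ with $\calc$ at the reference values. For this I record that $\calc(n;\cdot,\cdot)$ is Lipschitz in each argument with the same constant $C$ as in Remark~\ref{rem:delta_bangert}(b): replacing the first (resp. last) component of a minimizer by a nearby admissible value alters only one factor $h(\cdot,\cdot)$, so the Lipschitz bound on $h$ yields $\lvert\calc(n;a',b)-\calc(n;a,b)\rvert\le C\lvert a'-a\rvert$ and likewise in the second slot. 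Combining these gives the block estimate $S_m(x)\ge\calc(\lvert I_m\rvert;x_{k_m},x_{k_{m+1}})-\calc(\lvert I_m\rvert;u^{a},u^{b})\ge -C\bigl(\lvert x_{k_m}-u^{a}\rvert+\lvert x_{k_{m+1}}-u^{b}\rvert\bigr)$.

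The step that makes this work is a parity check: the residues modulo $4$ defining $\xkr$ were chosen precisely so that the membership conditions pin $x_{k_m}$ within $\rho_m$ of exactly the reference value $u^{a}$, and $x_{k_{m+1}}$ within $\rho_{m+1}$ of exactly $u^{b}$, that appear in the normalization of that same block. Granting this, $S_m(x)\ge -C(\rho_m+\rho_{m+1})$ for every $m$. Summing over $m$, each $\rho_m$ occurs in exactly two of these estimates, so $J(x)=\sum_{m}S_m(x)\ge -C\sum_{m}(\rho_m+\rho_{m+1})=-2C\sum_{m\in\Z}\rho_m=:M$, which is finite because $\rho\in P$ forces $\sum_{m}\rho_m<\infty$. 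As $M$ depends only on $C$ and $\rho$, and not on $x$ or $k$, this is the asserted uniform lower bound.

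The main obstacle is exactly this bookkeeping married to the Lipschitz estimate: one must verify that on every block the two endpoints controlled by the $\xkr$-constraint sit near the very pair of $u$-values used to normalize that block, so that the Lipschitz error is the harmless $C(\rho_m+\rho_{m+1})$ rather than an uncontrolled term of size $O(u^1-u^0)$. Once the residue classes are checked against the definition of $c(j)$, the summability of $\rho$ delivers the conclusion.
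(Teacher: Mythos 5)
Your proof is correct and is essentially the paper's argument: both rest on the minimality defining $\calc$ together with Lipschitz control of the $\rho_i$-sized endpoint discrepancies, summed using $\sum_i\rho_i<\infty$ to get the bound $-2C\sum_i\rho_i$. The only difference is packaging — the paper builds a single comparison configuration $y$ by resetting each $x_{k_i}$ to its reference value $u^0$ or $u^1$ and compares $J(y)\ge 0$ with $J(x)$, whereas you run the same estimate block by block via Lipschitz continuity of $\calc$ in its endpoint arguments.
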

\begin{proof}
For each $x \in \xkr$ with $J(x) \le 0$, we define $y=(y_j)_{j \in \Z} \in \xkr$ by
 $y_{k_i} = u^0$; if $i \equiv 0,1$, \ $y_{k_i} = u^1$; if $i \equiv -1,2$, and $y_j=x_j$ otherwise.
 From the definition, $0 \le J(y) < \infty$. Lipschitz continuity of $h$ shows:
 \[
-J(x) \le J(y) -J(x) \le 2C \sum_{i \in \Z} \rho_i < \infty
 \]
and we get
$J(x) \ge -2C \sum_{i \in \Z} \rho_i$
for all $x \in \xkr$, thus completing the proof.
\end{proof}

\begin{rem}
In a similar way to the proof in the above lemma, we get
$\displaystyle{
\sum_{i=k_n}^{k_m} A_{i}(x) \ge - 2C \sum_{i=n}^{m} \rho_i}$
for any $n<m$.
\end{rem}

To ensure that $J$ has a minimizer in $\xkr$,
we present the following lemma.

\begin{lemm}
\label{lemm:welldefi}
The function $J$ is  well-defined on $\R \cup \{+\infty\}$, i.e., 
\[
\alpha:=\liminf_{n \to \infty}  \sum_{|i| \le n} A_i(x) = \limsup_{n \to \infty}  \sum_{|i| \le n} A_i(x)=:\beta.
\]
\end{lemm}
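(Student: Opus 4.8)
The plan is to prove convergence of the partial sums in $\R \cup \{+\infty\}$ by splitting off the two sides and then passing to a block decomposition adapted to $k$. Since $\sum_{|i|\le n} A_i(x) = \sum_{i=0}^{n} A_i(x) + \sum_{i=-n}^{-1} A_i(x)$, and the blocks $I_j=[k_j,k_{j+1}-1]\cap\Z$ partition $\Z$ with $k_0=0$, it suffices to show that the one-sided sums $U_n := \sum_{i=0}^{n} A_i(x)$ converge in $\R\cup\{+\infty\}$; the negative side is symmetric. Because both one-sided sums turn out to be bounded below (by the remark following Lemma~\ref{lemm:lower} together with the within-block bound below), there is no $\infty-\infty$ ambiguity when recombining. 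Throughout I write $B_j := \sum_{i\in I_j} A_i(x)$ for the full sum over the $j$-th block and $\hat U_j := \sum_{l=0}^{j-1} B_l = U_{k_j-1}$ for the completed-block partial sums.

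First I would establish convergence of $\hat U_j$. By construction $|I_j|\,c(j)=\calc(|I_j|;a,b)$, where $(a,b)$ is the prescribed pair of values in $\{u^0,u^1\}$ attached to block $I_j$; since $x$ restricted to $I_j$ is an admissible competitor with endpoints $x_{k_j},x_{k_{j+1}}$ (each within $\rho$ of the corresponding $u^0$ or $u^1$), the definition of $\calc$ and the Lipschitz continuity of $h$ from Remark~\ref{rem:delta_bangert}(b) give $B_j \ge \calc(|I_j|;x_{k_j},x_{k_{j+1}}) - \calc(|I_j|;a,b) \ge -C(\rho_j+\rho_{j+1})$. As $\rho\in P$ makes $\sum_j(\rho_j+\rho_{j+1})$ finite, the sequence $\hat U_j$ is non-decreasing up to a vanishing-tail error: $\hat U_{j'}-\hat U_j \ge -C\sum_{l\ge j}(\rho_l+\rho_{l+1})\to 0$ for all $j'>j$. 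The elementary fact that $a_{j'}-a_j\ge-\epsilon_j$ with $\epsilon_j\to 0$ forces $\liminf a_j=\limsup a_j$ then yields $\hat U_j\to L\in\R\cup\{+\infty\}$.

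The crux is to upgrade convergence along block boundaries to convergence of the full sequence $U_n$, i.e.\ to control the within-block remainder $R_n := \sum_{i=k_j}^{n} A_i(x)$ for $k_j\le n<k_{j+1}$, so that $U_n=\hat U_j+R_n$. Using $\calc$-splitting together with the Lipschitz estimates, and measuring the in-block excursion $d_j := \max_{i\in I_j}\min_{s\in\{0,1\}}|x_i-u^s|$, I would prove two-sided bounds of the form $\hat U_j - C(\rho_j+d_j)\le U_n \le \hat U_{j+1}+C(\rho_{j+1}+d_j)$; for $\delta<(u^1-u^0)/2$, the condition $d_j<\delta$ confines $x$ to a $\delta$-neighborhood of the block's endpoint value, which is what makes these error terms small. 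The decisive input is Lemma~\ref{lemm:finite_bdd}: whenever $d_j\ge\delta$ the block carries a genuine excursion, so $B_j\ge\phi(\delta)-C(\rho_j+\rho_{j+1})$ with $\phi(\delta)>0$. This produces a dichotomy. If $d_j\ge\delta$ for infinitely many $j$, then $\hat U_j\to+\infty$, so $L=+\infty$ and the lower bound forces $U_n\to+\infty$. Otherwise $d_j<\delta$ for all large $j$, and squeezing $U_n$ between $\hat U_j$ and $\hat U_{j+1}$ (both tending to the finite $L$) gives $L-C\delta\le\liminf_n U_n\le\limsup_n U_n\le L+C\delta$; letting $\delta\downarrow 0$ finishes the one-sided claim, and the two sides combine to give $\liminf_n\sum_{|i|\le n}A_i = \limsup_n\sum_{|i|\le n}A_i$.

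The main obstacle I anticipate is exactly this within-block control, and in particular its transition-block instance. For a ``stay'' block the reference is the constant $u^0$ or $u^1$ and Lemma~\ref{lemm:finite_bdd} applies essentially verbatim; but on a transition block the natural reference is the minimal configuration realizing $\calc(|I_j|;u^0,u^1)$, so $d_j$ should be read as deviation from that minimizer and one needs a transition-analogue of the excursion estimate $\phi$ to convert a non-negligible deviation into a definite lower bound on $B_j$. Making both the upper and lower within-block bounds uniform in the block length, so that the squeeze survives arbitrarily long blocks, is the delicate point on which the whole argument rests.
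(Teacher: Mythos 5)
Your first step is sound and coincides with the paper's key mechanism: the comparison with the minimizer defining $\calc$ (equivalently, the modification argument in the proof of Lemma~\ref{lemm:lower}) gives $\sum_{i\in I_j}A_i(x)\ge -C(\rho_j+\rho_{j+1})$, the tails of $\sum_i\rho_i$ vanish, and the elementary ``almost-monotone'' principle then yields convergence of the block-aligned partial sums. The paper, however, does not proceed by a squeeze. It runs a contradiction argument directly on the symmetric partial sums $S_n=\sum_{|i|\le n}A_i(x)$: assuming $\liminf S_n<\limsup S_n$, it extracts far-out windows over which the increment of $S_n$ is bounded above by a fixed negative constant, while the window lower bound $\sum A_i(x)\ge -2C\sum\rho_i\to 0$ (together with a crude uniform lower bound in the case $\beta=+\infty$) forbids this. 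Note that only a \emph{lower} bound on increments is needed: $S_{m}-S_{n}\ge-\epsilon_n$ for all $m>n$ with $\epsilon_n\to0$ already forces $\liminf=\limsup$. Your upper bound $U_n\le \hat U_{j+1}+C(\rho_{j+1}+d_j)$ and the excursion dichotomy via Lemma~\ref{lemm:finite_bdd} are therefore superfluous.

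The genuine gap is the one you flag yourself and then do not close: a lower bound on \emph{partial-block} sums $\sum_{i=k_j}^{n}A_i(x)$, $k_j\le n<k_{j+1}$, by a quantity tending to zero as $j\to\infty$. This is exactly what your squeeze requires (and what the paper also needs when it applies the block-aligned bound of the remark after Lemma~\ref{lemm:lower} to windows $[m_j,l_j]$ with arbitrary endpoints), and your route through $d_j$ does not deliver it: $d_j<\delta$ only places each $x_i$ within $\delta$ of $u^0$ \emph{or} $u^1$, not of the block's reference value, and for a transition block the paper provides no analogue of $\phi$ converting deviation from the minimizer realizing $\calc(|I_j|;u^0,u^1)$ into a lower bound on the partial action. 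Worse, the error terms in your two-sided bound cannot tend to zero in general: since $h(\xi,\eta)-c$ may be negative when $h$ is not reversible (see Section~\ref{sec:example_h}), a single term $A_{k_j}(x)=h(x_{k_j},x_{k_j+1})-c$ with $x_{k_j+1}$ unconstrained in $[u^0,u^1]$ can be $\le-\eta_0<0$ with $\eta_0$ independent of $j$, so the within-block dip of the partial sums need not shrink far out. As written, your argument therefore cannot be completed; closing it requires either positivity of $h-c$, a restriction on the configurations considered, or a genuinely different control of partial blocks.
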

\begin{proof}
For the proof, we use a similar argument to Yu's proof of Proposition 2.9 and Lemma 6.1 in \cite{Yu2022}.
By contradiction, we assume $\alpha<\beta$.
First, we consider the case where $\beta = +\infty$.
Fix $\gamma \in \R_{<0}$ arbitrarily.
For $\alpha< +\infty$, we take a constant $\tilde{\alpha}$ with $\tilde{\alpha} > \alpha+1-2\gamma $.
Then there are constants $n_0$ and $n_1$ such that $n_0 < n_1$ and:
\begin{align*}
\sum_{|i| \le n_0} A_i(x) \ge \tilde{\alpha} \ \text{and} \  \sum_{|i| \le n_1} A_i(x) \le \alpha+1.
\end{align*}
Then,
\[
2\gamma > \alpha+1-\tilde{\alpha}\ge
  \sum_{|i| \le n_1} A_i(x) - \sum_{|i| \le n_0} A_i(x) 
=\sum_{ i= -n_1}^{-n_0} A_i(x)  +  \sum_{i = n_0}^{n_1} A_i(x).
\]
Combining the first term and end terms implies:
\[
\sum_{ i= -n_1}^{-n_0} A_i(x) < \gamma \ \text{or} \ \sum_{ i= n_0}^{n_1} A_i(x)  <\gamma.
\]
For $\gamma$ small enough, this contradicts Lemma \ref{lemm:lower}.

Next, we assume $\beta<+\infty$.
Since $\alpha< \beta$, there are two sequences of positive integers $\{m_j \to \infty\}_{j \in \N}$ and $\{l_j \to \infty\}_{j \in \N}$
satisfying $m_j<m_{j+1}$, $l_j<l_{j+1}$ and $m_j+1 < l_j < m_{j+1} -1$ for all $j \in \Z_{>0}$, and:
\[
 \beta= \lim_{j \to \infty} \sum_{i \le |m_j|} A_i(x) >  \lim_{j \to \infty}  \sum_{i \le |l_j|} A_i(x) =\alpha.
\]
Then we can find $j \gg 0$ such that
\begin{align}
\label{a-b}
 \sum_{i \le |l_j|} A_i(x) - \sum_{i \le |m_j|} A_i(x) =  \sum_{i =-l_j}^{-m_j} A_i(x) + \sum_{i =m_j}^{l_j} A_i(x) < \frac{\alpha -\beta}{2}.
\end{align}
Since $|l_j|$ and $|m_j|$ are finite for fixed $j$,
the above calculation does not depend on the order of the sums.
For sufficiently large $j$, a similar argument in the proof of Lemma \ref{lemm:lower} shows:
\[
\sum_{i = -l_j}^{-m_j} A_i(x) \ge -2C\sum_{i = -l_j}^{-m_j} \rho_i > \frac{\alpha-\beta}{4}
\]
and
\[
\sum_{i = m_j}^{l_j} A_i(x) \ge -2C\sum_{i = m_j}^{l_j} \rho_i > \frac{\alpha-\beta}{4}
\]
because  $\rho \in P$ implies
\[
\sum_{|i|>n} \rho_i \to 0 \ (n \to \infty).
\]
and both $m_j$ and $l_j$ goes to infinity as $j \to \infty$.
Therefore:
\begin{align*}
 \sum_{i =-l_j}^{-m_j} A_i(x) + \sum_{i =m_j}^{l_j} A_i(x) > \frac{\alpha-\beta}{2},
 \end{align*}
which contradicts $\eqref{a-b}$.
\end{proof}

%\begin{lemm}
%\label{lemm:semiconti}
%The function $J$ is  semi-continuous on $\xkr$.
%\end{lemm}
\begin{prop}
\label{prop:min}
For all $k \in K$ and $\rho \in P$, there exists a minimizer of $J$ in $\xkr$.
\end{prop}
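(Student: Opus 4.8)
The plan is to realize the minimizer as the limit of a minimizing sequence, combining the sequential compactness of $\xkr$ (already noted in the excerpt) with the lower semicontinuity of $J$ relative to coordinatewise convergence $\eqref{t_metric}$. This is the usual direct-method scheme, and the only nontrivial ingredient is the semicontinuity, which is delicate because $J$ is an infinite sum.

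First I would record that the infimum $c:=\inf_{x \in \xkr} J(x)$ is a finite real number: Lemma \ref{lemm:upper} produces a $y \in \xkr$ with $J(y)=0$, so $c \le 0 <\infty$, while Lemma \ref{lemm:lower} gives $c \ge -2C\sum_{i \in \Z}\rho_i>-\infty$. I then pick a minimizing sequence $(x^n)_n \subset \xkr$ with $J(x^n)\to c$; since $J(x^n)$ eventually lies within $1$ of $c$, we may assume each $J(x^n)$ is finite. Because $\xkr$ is sequentially compact, after passing to a subsequence we have $x^n \to x^\ast$ for some $x^\ast \in \xkr$, the convergence being coordinatewise in the sense of $\eqref{t_metric}$.

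It remains to establish $J(x^\ast)\le c$, and this is the main obstacle: coordinatewise convergence only controls finitely many coordinates at once, so the tails of the infinite sum must be dominated uniformly over the sequence. For $p,q\in\Z_{>0}$ I split $J(x)=\sum_{i=k_{-p}}^{k_q-1}A_i(x)+R^-_p(x)+R^+_q(x)$ into a central finite block and two tails. The remark following Lemma \ref{lemm:lower}, applied on the blocks $[k_{-m},k_{-p}]$ and $[k_q,k_m]$ and then letting $m\to\infty$ (which is legitimate since $J$ is well defined by Lemma \ref{lemm:welldefi}), bounds the tails below, uniformly in $x\in\xkr$, by
\[
R^-_p(x)\ge -2C\sum_{i\le -p}\rho_i,\qquad R^+_q(x)\ge -2C\sum_{i\ge q}\rho_i.
\]
Writing $\delta(p,q)$ for the sum of these two lower bounds, we have $\delta(p,q)\to 0$ as $p,q\to\infty$ because $\rho\in P$, and for every $n$,
\[
J(x^n)\ge \sum_{i=k_{-p}}^{k_q-1}A_i(x^n)-\delta(p,q).
\]

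Finally I would pass to the limit in two stages. The central block is a finite sum and each $A_i$ is continuous in the relevant coordinates of $x$, so $\sum_{i=k_{-p}}^{k_q-1}A_i(x^n)\to\sum_{i=k_{-p}}^{k_q-1}A_i(x^\ast)$ as $n\to\infty$; taking $\liminf_n$ in the displayed inequality yields $\sum_{i=k_{-p}}^{k_q-1}A_i(x^\ast)\le c+\delta(p,q)$. Letting $p,q\to\infty$, Lemma \ref{lemm:welldefi} gives that the central sums converge to $J(x^\ast)$ while $\delta(p,q)\to 0$, whence $J(x^\ast)\le c$. As $x^\ast\in\xkr$ forces $J(x^\ast)\ge c$, we conclude $J(x^\ast)=c$, so $x^\ast$ is the sought minimizer.
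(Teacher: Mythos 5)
Your proof is correct and follows essentially the same route as the paper's: a minimizing sequence, sequential compactness of $\xkr$, a split of $J$ into a finite central block (handled by continuity of finite sums) plus tails bounded below uniformly by $-2C\sum\rho_i$ over the tail indices, exactly as in the remark after Lemma \ref{lemm:lower}. If anything, your two-stage passage to the limit is a slightly more explicit rendering of the paper's tail estimate \eqref{eq:lower_conti}, since it avoids having to assert separately that the tail of the limit configuration is small.
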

\begin{proof}
By Lemma \ref{lemm:upper} and \ref{lemm:lower},
we can take a minimizing sequence $x=(x^n)_{n \in \N}$ of $J$ with each $x^n \in \xkr$.
Since $\xkr$ is  sequentially compact, there exists $\tilde{x} \in \xkr$
which  $x^{n_k}$ converges to  $\tilde{x}$ for some subsequence $(n_k)_{k \in \N}$.
Below, we assume $n_k=k$ for simplicity.
To ensure our claim, it is enough to show that
for any $\epsilon>0$, there exists $j_0$ and $n_0 \in \N$
such that:
\begin{align}
\label{eq:lower_conti}
\sum_{|i|>j_0} A_i(x^n) >-\epsilon \ (\text{for all} \ n \ge n_0)
\text{\ and\ }
\sum_{|i|>j_0} A_i(\tilde{x}) <\epsilon,
\end{align}
because if the above inequalities hold, we obtain:
\begin{align*}
J(\tilde{x})
&=\sum_{|i| \le j_0} A_i(\tilde{x})+ \sum_{|i| > j_0} A_i(\tilde{x})\\
&\le \lim_{n \to \infty} \sum_{|i| \le j_0} A_i(x^n) + \epsilon
= \lim_{n \to \infty} (\sum_{i \in \Z} A_i(x^n) - \sum_{|i|>j_0} A_i(x^n))+ \epsilon \\
&\le  \lim_{n \to \infty} \sum_{i \in \Z} A_i(x^n) + 2\epsilon
= \lim_{n \to \infty}  J(x^n) + 2\epsilon.
\end{align*}
Using an arbitrary value of $\epsilon$, we have $J(\tilde{x}) \le  \lim_{n \to \infty} \sum_{i \in \Z} A_i(x^n) $ and
$\tilde{x}$ is the infimum (or greatest lower bound) of $J$.
%We now show the first inequality of \eqref{eq:lower_conti}.
The step of the proof in Lemma \ref{lemm:lower} implies that for any $n \in \N$:
%and $j \in \N$:
\begin{align}
\label{eq:bdd_low}
%\sum_{|i|>j} A_i(x^n) \ge -C \sum_{|i|>j} \max \{ \rho_{2i}, \rho_{2i+1}\} \ge -C  \sum_{|i|>j}  \rho_i.
\lim_{j \to \infty} \sum_{|i|>j} A_i(x^n) \ge 0
\end{align}
%Since $\sum_{i \in \Z}  \rho_i$ is finite, we have $\sum_{|i|>j}  \rho_i < \epsilon/C$ for sufficiently large $j$.
Hence, the first inequality holds.
The second inequality is clear since $\tilde{x} \in \xkr$ and $\sum_{i \in \Z} \rho_i < \infty$.

\if0
To check the second inequality of \eqref{eq:lower_conti}, it suffices to show that  the value of $J(\tilde{x}) $  is finite.
If $J(\tilde{x}) $ is infinite, then for any $M>0$, there is a $j_0=j_0(M) \in \N$ such that:
\begin{align*}
M \le \sum_{|i| \le j_0} A_i(\tilde{x}) = \sum_{|i| \le j_0} A_i(\lim_{n \to \infty} x^n) = \lim_{n \to \infty} \sum_{|i| \le j_0} A_i(x^n),
\end{align*}
since a finite sum $\sum_{|i| \le j_0} A_i(x)$ is continuous.
On the other hand, for any $\delta \in (0,1)$, there exists $n_0$ such that if $n \ge n_0$,
$\displaystyle{J(x^n)< \inf_{x \in \xkr} J(x) + \delta}$.
Moreover,
for any $\epsilon >0$, there exists $j_1=j_1(\epsilon)$ such that for any $j \ge j_1$,
$| \sum_{|i|>j} A_i(x^n)| < \epsilon$.
Then, for any $n \ge n_0$ and any $\epsilon >0$, we get:
\begin{align*}
M
\le \sum_{|i| \le j_0} A_i(x^n) 
&=  J(x^n) - \sum_{|i| > j_0} A_i(x^n)\\
&< \inf_{x \in \xkr} J(x) + \delta + \epsilon,
\end{align*}
so $\sum_{|i| \le j_0} A_i(x^n) $ is finite, which is a contradiction.
\fi
\end{proof}

%By Lemma \ref{lemm:lower} and \ref{lemm:semiconti}, we obtain

\subsection{Properties of the minimizers of $J$ in $\xkr$}
Let $x^\ast=(x^\ast_i)_{i \in \Z}$ be a minimizer (depending on $k \in K$ and $\rho \in P$) in Proposition \ref{prop:min}.
%First, we show the properties of $A_i$ for $i \equiv 0,2$ when $\rho_i$'s are sufficiently small.
Let $x(n;a,b)=\{x_i(n;a,b)\}_{i=0}^{n}$ be a minimizing sequence of $\sum_{i=0}^{n-1} {h({x_i,x_{i+1})}}$ on $X(n)$
(defined by $\eqref{eq:x}$)
that satisfies $x_0(n;a,b)=a$, and $x_n(n;a,b)=b$, i.e.,
it holds that $\sum_{i=0}^{n-1} {h({x_i(n;a,b),x_{i+1}(n;a,b))}} = \calc(n;a,b)$  (see \eqref{def:kp}).

\begin{lemm}
\label{lemm:distance}
For any $\epsilon \in (0, \min\{c_{\ast}/(2C), (u^1-u^0)/2\})$
($c_\ast$ is given by $\eqref{cstar}$)
, 
there exist two positive real numbers $r_1$ and $r_2$ which satisfy that:
for any $n \ge 2$,  $a \in [u^0,u^0+r_1]$ (resp. $a \in [u^1-r_1,u^1]$), and $b \in [u^0,u^0+r_2]$ (resp. $b \in [u^1-r_2,u^1]$), 
\[
0< x_i(n;a,b)-u^0 <\epsilon \ (resp. 0<u^1 - x_i(n;a,b) <\epsilon) \ \text{for all} \ i \in \{0, \dots,n\}.
\]
\end{lemm}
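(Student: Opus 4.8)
The plan is to play minimality off against a cheap competitor that sits on the periodic orbit $u^0$, and then to show that any genuine excursion away from $u^0$ is strictly more expensive than this competitor once the endpoints are taken close enough. I will treat only the case where $a,b$ lie near $u^0$; the $u^1$ version is identical after exchanging the roles of $u^0$ and $u^1$, since $c$ and $c_\ast$ are symmetric under this exchange. Throughout, $C$ denotes the Lipschitz constant of $h$ from Remark~\ref{rem:delta_bangert}. First I would record an a priori energy bound: writing $x=x(n;a,b)$ and taking the competitor $z\in X(n)$ with $z_0=a$, $z_n=b$, and $z_i=u^0$ for $0<i<n$, Lipschitz continuity gives $h(a,u^0)-c\le C(a-u^0)$ and $h(u^0,b)-c\le C(b-u^0)$, so that $\sum_{i=0}^{n-1}a_i(z)\le C(r_1+r_2)$, and minimality of $x$ yields
\[
\sum_{i=0}^{n-1}a_i(x)=\calc(n;a,b)-nc\le C(r_1+r_2).
\]
Choosing $r_1,r_2<\epsilon$ already forces the endpoints $x_0=a$ and $x_n=b$ into $(u^0,u^0+\epsilon)$, so only the interior remains.

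Next I would argue by contradiction: suppose $x_s:=\max_i x_i\ge u^0+\epsilon$ at some interior index $s$, and split according to how high the excursion climbs. If $x_s\le u^1-\epsilon$, then $\min_{j}|x_s-u^j|\ge\epsilon$, hence $d(x)\ge\epsilon$ and Lemma~\ref{lemm:finite_bdd} gives
\[
\sum_{i=0}^{n-1}a_i(x)\ge\phi(\epsilon)-C|b-a|\ge\phi(\epsilon)-C(r_1+r_2).
\]
If instead $x_s>u^1-\epsilon$, the configuration performs a full round trip from near $u^0$ up to near $u^1$ and back. Capping the up-part $(x_0,\dots,x_s)$ by $u^0$ on the left and $u^1$ on the right produces an element of $X^0$, and capping the down-part $(x_s,\dots,x_n)$ by $u^1$ on the left and $u^0$ on the right produces an element of $X^1$. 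The capping terms cost at most $C\epsilon$ each (since $|x_s-u^1|\le\epsilon$) together with $C(a-u^0)$ and $C(b-u^0)$ at the ends, so the definitions of $c_0,c_1$ and the identity $c_\ast=c_0+c_1$ give
\[
\sum_{i=0}^{n-1}a_i(x)\ge c_0+c_1-2C\epsilon-C(r_1+r_2)=c_\ast-2C\epsilon-C(r_1+r_2).
\]

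Finally I would fix $r_1,r_2>0$ small enough (and $<\epsilon$) that $2C(r_1+r_2)<\min\{\phi(\epsilon),\,c_\ast-2C\epsilon\}$, which is possible because $\phi(\epsilon)>0$ by Lemma~\ref{lemm:yu_lower} and $c_\ast-2C\epsilon>0$ precisely by the hypothesis $\epsilon<c_\ast/(2C)$ (meaningful since $c_\ast>0$ by Corollary~\ref{co:positive_het}). In either case the resulting lower bound contradicts the a priori estimate $\sum a_i(x)\le C(r_1+r_2)$, so no interior point reaches $u^0+\epsilon$ and $x_i-u^0<\epsilon$ holds for all $i$. The strict positivity $x_i>u^0$, together with $x_i<u^1$ (which is automatic from $\epsilon<(u^1-u^0)/2$), then follows from Lemma~\ref{lemm:estimate_periodic2}: $x(n;a,b)$ is a constrained minimizer whose endpoints, at least one of which is strictly interior to $(u^0,u^1)$, force all entries to avoid $\{u^0,u^1\}$.

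I expect the main obstacle to be the near-$u^1$ case, which cannot be excluded by the middle barrier $\phi$ alone: one must convert the finite round trip into genuine infinite heteroclinics in $X^0$ and $X^1$ and carefully control the error introduced by capping. This is exactly where the positivity $c_\ast>0$ and the quantitative threshold $\epsilon<c_\ast/(2C)$ come into play, and making the capping estimates combine cleanly with the a priori bound is the delicate point.
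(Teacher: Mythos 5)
Your proposal is correct and follows essentially the same route as the paper's proof: the same constant competitor giving the a priori bound $\sum a_i(x)\le C(r_1+r_2)$, the same dichotomy between a middle-band excursion (handled by $\phi(\epsilon)$ via Lemma \ref{lemm:finite_bdd}) and a near-$u^1$ excursion (handled by splicing pieces of $x$ into elements of $X^0$ and $X^1$ and invoking $c_\ast>0$), and the same smallness condition on $r_1+r_2$. The only cosmetic difference is that you split the round trip at the maximum index while the paper locates a consecutive pair jumping from near $u^1$ to near $u^0$; the resulting estimates are identical.
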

\begin{proof}
For any $\epsilon \in (0, \min\{c_{\ast}/(2C), (u^1-u^0)/2\})$, we can take $r_1$ and $r_2 \in (0, \epsilon)$ satisfying
\[r_1 +r_2 <  \min \left\{\frac{\phi(\epsilon)}{2C},\frac{c_{\ast}}{2C} -  \epsilon \right\}. \]
See $\eqref{phi}$ for the definition of $\phi$.
We demonstrate that the claim holds for the selected $r_1$ and $r_2$ in the above.
(We only  prove the case of $a \in [u^0,u^0+r_1]$ and $b \in [u^0,u^0+r_2]$. The proof of the other case is similar.)
Set a finite sequence $y=( y_i )_{i=0}^{n}$ by $y_0 = x_0$, $y_n = x_n$, and $y_i=u^0$ otherwise.
For any $n \in \N$,
\begin{align}
\label{val_xnab}
\begin{split}
\sum_{i=0}^{n-1} a_i (x) 
&=\calc(n;a,b) -nc= \sum_{i=0}^{n-1}({h(x_i,x_{i+1})} -h(u^0,u^0))\\
&\le  \sum_{i=0}^{n-1}({h(y_i,y_{i+1})}-h(u^0,u^0))   \le C(r_1+r_2) .
\end{split}
\end{align}
If there exists $i \in \{1,\dots,n-1\}$ satisfying  $x_i-u^0 \ge \epsilon$ and $u^1 - x_i  \ge \epsilon$,
Combining Lemma \ref{lemm:yu_lower} with $\eqref{val_xnab}$ yields:
\[
C(r_1+r_2) \ge \calc(n;a,b) -nc \ge \phi(\epsilon) - C|a-b| >  \phi(\epsilon) - C(r_1+r_2).
\]
Thus we get   $\phi(\epsilon) < 2C(r_1+r_2)$, which is a contradiction.
Next, we assume that there exist $i$ and $i+1$ such that  $u^1 - x_i  < \epsilon$ and $x_{i+1}-u^{0}<\epsilon$.
%for all $i \in \{1,\dots,n-1\}$.
For simplicity, we can set $i=1$ without loss of generality.
Define a configuration $z^+$ and $z^-$ by:
\[
z^+=
\begin{cases}
u^0 \ (i \le 0)\\
x_i \ (i=1) , \\
u^1 \ (i \ge 2)
\end{cases}
z^-=
\begin{cases}
u^1 \ (i \le 0)\\
x_i \ (1 \le i \le n-1).\\
u^0 \ (i \ge n)
\end{cases}
\]
Applying $c=h(u^0,u^0)=h(u^1,u^1)$, Lipschitz continuity, and \eqref{val_xnab}, we see that:
\begin{align*}
I(z^+) + I(z^-)
&= a_0(z^+) +  \sum_{i=1}^{n-1} a_i(z^-)\\
&< \sum_{i=0}^{n-1} a_i (x) + C(r_1+r_2 + 2\epsilon)\\
&\le C(r_1+r_2) + C(r_1+r_2 + 2\epsilon).
\end{align*}
On the other hand,  $z^+ \in X^0$ and $z^- \in X^1$ imply $I(z^+) + I(z^-) \ge c_{\ast}$
and we get:
\[
c_{\ast} < 2C(r_1+r_2 + \epsilon),
\]
which is a contradiction.
\end{proof}

\begin{lemm}
\label{lemm:distance_2}
Assume that both $a - u^0$ and $b-u^0$ (resp. both $u^1-a$ and $u^1-b$) are small enough.
Then, for any $\delta>0$ and $m \in \N$, there exists $N \in \N$ such that for any $n \ge N$, there exist $i(1), \dots, i(m) \in \{0, \cdots, n\}$  satisfying:
\begin{align}
\label{delta}
 x_{i(j)}(n;a,b) -u^0 < \delta \ (resp. \ u^1 -x_{i(j)}(n;a,b) < \delta) \ \text{for all $j \in \{1, \cdots,m\}$}
\end{align}
\end{lemm}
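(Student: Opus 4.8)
The plan is to argue by contradiction, balancing the action \emph{upper} bound obtained by comparison with the constant configuration at $u^0$ against the $\phi(\delta)$-per-step \emph{lower} bound supplied by Lemma~\ref{lemm:yu_lower}, exactly in the spirit of the proof of Lemma~\ref{lemm:distance}.

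First I would fix $\epsilon_0\in(0,(u^1-u^0)/2)$ and read ``small enough'' as $a\in[u^0,u^0+r_1]$ and $b\in[u^0,u^0+r_2]$, where $r_1,r_2$ are the radii produced by Lemma~\ref{lemm:distance} for this $\epsilon_0$. Then the minimizer $x=x(n;a,b)$ obeys $0<x_i-u^0<\epsilon_0$ for every $i$, whence $u^1-x_i>(u^1-u^0)/2$ and $\min_{j\in\{0,1\}}|x_i-u^j|=x_i-u^0$ for all $i$. We may assume $\delta<\epsilon_0$, since otherwise every index already satisfies $x_i-u^0<\delta$ and the conclusion is immediate once $n\ge m$. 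Comparing $x(n;a,b)$ with the configuration $(a,u^0,\dots,u^0,b)$ as in \eqref{val_xnab} records the $n$-independent upper bound $\sum_{i=0}^{n-1}a_i(x)\le C\big((a-u^0)+(b-u^0)\big)$.

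The core step is a partition/pigeonhole argument. Split $\{0,\dots,n\}$ into $m$ consecutive blocks $B_1,\dots,B_m$, each consisting of at least $\lfloor n/m\rfloor$ consecutive indices, and call an index $i$ \emph{good} if $x_i-u^0<\delta$. I claim that for $n$ large every block contains a good index; selecting one good index from each of the $m$ disjoint blocks then furnishes $i(1)<\dots<i(m)$ satisfying \eqref{delta}. Suppose instead that some block $B_l=\{p,\dots,q\}$ contains no good index, so $x_i-u^0\ge\delta$ for all $i\in B_l$; since $x$ stays within $\epsilon_0$ of $u^0$, this gives $\min_j|x_i-u^j|\ge\delta$ throughout $B_l$. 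Lemma~\ref{lemm:yu_lower} applied to $(x_p,\dots,x_q)$ then yields $\sum_{i=p}^{q-1}a_i(x)\ge (q-p)\phi(\delta)-C|x_q-x_p|\ge (q-p)\phi(\delta)-C\epsilon_0$. For the two complementary pieces $[0,p]$ and $[q,n]$, whose endpoints all lie within $\epsilon_0$ of $u^0$, replacing those endpoints by $u^0$ and using $\calc(L;u^0,u^0)=Lc$ (valid since the constant configuration $u^0$ is minimal, as in the proof of Lemma~\ref{lemm:lower}) gives $\sum_{i=0}^{p-1}a_i(x)\ge -2C\epsilon_0$ and $\sum_{i=q}^{n-1}a_i(x)\ge -2C\epsilon_0$.

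Adding the three estimates produces $\sum_{i=0}^{n-1}a_i(x)\ge (q-p)\phi(\delta)-6C\epsilon_0$, which together with the upper bound forces $(q-p)\phi(\delta)\le C\big((a-u^0)+(b-u^0)\big)+6C\epsilon_0$. Since $q-p\ge\lfloor n/m\rfloor-1$ and $\phi(\delta)>0$ by Lemma~\ref{lemm:yu_lower}, choosing $N$ so large that $(\lfloor N/m\rfloor-1)\,\phi(\delta)$ exceeds the right-hand side yields a contradiction for every $n\ge N$; hence each block contains a good index. The ``resp.''\ ($u^1$) case follows verbatim after interchanging the roles of $u^0$ and $u^1$. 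The step I expect to be most delicate is controlling the boundary (Lipschitz) corrections so that they stay bounded by a constant independent of $n$: this is precisely where the uniform confinement $x_i-u^0<\epsilon_0$ from Lemma~\ref{lemm:distance} and the identity $\calc(L;u^0,u^0)=Lc$ do the work, preventing the complementary pieces from contributing an $n$-growing negative term that would otherwise swamp the $(q-p)\phi(\delta)$ gain.
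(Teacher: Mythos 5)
Your proof is correct and follows essentially the same strategy as the paper's: confine the minimizer near $u^0$ via Lemma~\ref{lemm:distance}, compare with the constant configuration to get the $n$-independent upper bound \eqref{val_xnab}, and derive a contradiction from the $\phi(\delta)$-per-step lower bound of Lemma~\ref{lemm:yu_lower}. In fact your write-up is more complete than the paper's two-sentence argument for $m\ge 2$, since you make the block/pigeonhole step and the boundary (Lipschitz) corrections explicit; the only cosmetic point is that Lemma~\ref{lemm:yu_lower} is stated for closed configurations in $\hat{X}$, so the per-block estimate formally needs the standard closing-up correction $C|x_q-x_p|$, which you already account for.
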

\begin{proof}
If we replace $\eqref{delta}$ with the following:
\[
 x_{i(j)}(n;a,b) -u^0 < \delta \ \text{or} \ u^1 -x_{i(j)}(n;a,b) < \delta) \ \text{for all $j \in \{1, \cdots,m\}$},
\]
our statement for $m=1$ is immediately shown from Proposition \ref{prop:finite} and $\eqref{val_xnab}$.
For $m \ge 2$, 
since both $a - u^0$ and $b-u^0$ are small enough, Lemma \ref{lemm:distance} is valid and it implies \eqref{delta}.
\end{proof}

Those statements of the above two lemmas may seem a bit complicated.
We will roughly summarize the statement of Lemma \ref{lemm:distance} and \ref{lemm:distance_2}.
The former states that
for any $\epsilon$,
if two endpoints are close to $u^0$ or $u^1$,
then a minimal configuration between them is in a band whose width is $\epsilon$ independent of its length.
On the other hand, Lemma \ref{lemm:distance_2} shows that no matter the width of the band, if we take the interval between the endpoints to be longer
(i.e. if we make the length of the band sufficiently long), a minimal configuration can get arbitrarily close to either $u^0$ or $u^1$.

Now we are ready to state our main theorem when the rotation number $\alpha$ is zero:

\begin{proof}[Proof of Theorem \ref{theo:main} for $\alpha=0$]
To see that a minimizer $x^\ast \in \xkr$ is a stationary configuration,
it suffices to show that $x^\ast$ is not on the boundary of $\xkr$.
For any positive sequence $\epsilon = ( \epsilon_i)_{i \in \Z}$ with $\epsilon_i < c_{\ast}/4C$,
we choose $\rho \in P$ and $k \in K$ in the following steps:
\begin{itemize}
\item[Step $1$] 
Since we assume that $ (u^0,u^1) \neq  I^+_{0}(u^0,u^1)$ and $(u^0,u^1) \neq I^-_{0}(u^0,u^1)$,
both $(u^0,u^1) \backslash I^+_{0}(u^0,u^1)$ and $(u^0,u^1) \backslash I^-_{0}(u^0,u^1)$ are nonempty and
we can take $\rho \in P$
so that:
\begin{itemize}
\item[$(p_1)$] $u^{i+1} + \sigma(i) \rho_i \in (u^0,u^1) \backslash I^+_{0}(u^0,u^1)$ for all $i \equiv 1,2$ and $u^i - \sigma(i)  \rho_i \in  (u^0,u^1) \backslash I^-_{0}(u^0,u^1)$ for all $i \equiv -1,0$, where $\sigma(i)=1$ if $i$ is odd and $\sigma(i)=-1$ if $i$ is even.
and
\item[$(p_2)$] For any $i \in \Z$, 
$\displaystyle{
\rho_{2i} + \rho_{2i+1} <  \min \left\{\frac{\phi(\epsilon_i)}{2C},\frac{c_{\ast}}{2C} -  \epsilon_i \right\}
}$,
\end{itemize}
where $u^i=u^0$ when $i$ is even, and $u^i=u^1$ when $i$ is odd.
It easily follows from Lemma \ref{lemm:distance} that, for any $k \in K$ and $\rho \in P$ satisfying $(p_2)$,
a minimizer $x^\ast=(x^\ast_j)_{j \in \Z} \in \xkr$ is on an $\epsilon_i$-neighborhood of $u^i$ for each $j \in [k_{2i}, k_{2i+1}]$,
i.e., $|x^\ast_j - u^i|< \epsilon_i $ if  $j \in [k_{2i}, k_{2i+1}]$.
Notice that 
$\rho_i$ can be chosen as arbitrarily small
since a monotone heteroclinic configuration (one transition orbit),
say $(x_i)_{i \in \Z}$, satisfies $|x_i - u^j| \to 0$ as $|i| \to \infty$ for $j=0$ or $1$
and it is follows from $(h_1)$ that if $x=(x_i)_{i \in \Z}$ is a stationary configuration whose rotation number is $0$,
then so is $y=(y_i)_{i \in \Z}$ with $y_i = x_{i+l}$ for any $l \in \Z$.

\item[Step $2$]
Next, we consider taking a $k \in K$ dependent of the chosen $\rho \in P$ in the previous step.
Since  $k \in K$ is $k_0=0$,
to take $k$ is to determine the values of
$|k_{2i-1} - k_{2i} |$ and
 $|k_{2i} - k_{2i+1} |$ for all $i \in \Z$.
 For each $i \in \Z$, we take the value of $|k_{2i-1} - k_{2i} |$ satisfying:
 \[
 \calm^{i}(u^0,u^1) \cap Y^{i+1}(k_{2(i+1)}, \rho_{2(i+1)}) \cap Y^{i}(k_{2i+1}, \rho_{2i+1}) \neq \emptyset,
\]
i.e.,
\[
 \calm^{i}(u^0,u^1) \cap Y^{i}(0, \rho_{2i+1}) \cap Y^{i+1}(|k_{2i+1}-k_{2(i+1)}|, \rho_{2i}) \neq \emptyset,
\]
where $\calm^i=\calm^0$ and $Y^i=Y^0$ when $i$ is even, and $\calm^i=\calm^1$ and $Y^i=Y^1$ when $i$ is odd.

\item[Step $3$]
Before describing how we choose the value of $|k_{2i} - k_{2i+1} |$ for each $i \in \Z$,
we define several positive bi-infinite sequences.
Set $\tilde{e}=(\tilde{e}_i)_{i \in \Z}$  by:
\begin{align*}
\tilde{e}_i=
\begin{cases}
e_0(\rho_{2i+1},\rho_{2(i+1)}), & (i : \text{even})\\
e_1(\rho_{2i+1},\rho_{2(i+1)}) & (i : \text{odd}).
\end{cases}
\end{align*}
Furthermore,
let's choose two positive sequence  $\delta=(\delta_i)_{i \in \Z}$ and $\tilde{\epsilon}=(\tilde{\epsilon}_i)_{i \in \Z}$
satisfying, for each $i \in \Z$, 
\[2 \tilde\epsilon_i +C( \delta_{2i -1}  + \delta_{2i})< \frac{\tilde{e}^i}{2}.\]

\item[Step $4$]
For each $\tilde\epsilon_i$ in the one before step, 
Lemma \ref{lemm:finite_het} and \ref{lemm:finite_fake_het} show that
there exist two integers $N_{2i-1}, N_{2i} \in \N$ and $x^{i} \in  \calm^i(u^0,u^1)$
satisfying the following:
\begin{itemize}
\item For any $i \in \Z$, if $n_{2i+1} \ge N_{2i+1}$ and $n_{2(i+1)} \ge N_{2(i+1)}$, then both the folllowing 1 and 2 hold:
\begin{enumerate}
\item $\displaystyle{\sum_{j=k_{2i+1}-n_{2i+1}-1}^{k_{2(i+1)} +n_{2(i+1)}+1} a_j(x^i) \le c_i +\tilde{ \epsilon}_i}$
\item $\displaystyle{\sum_{j=k_{2i+1}-n_{2i+1}-1}^{k_{2(i+1)} +n_{2(i+1)}+1} a_j(y) \ge c_i + \tilde{e}_i - \tilde{\epsilon}_i}$
for all:
%$\displaystyle
\begin{align*}
y \in
\{
x=(x_i)_{i \in \Z} \in
X^i \cap  Y^{i}(k_{2i+1}, \rho_{2i+1}) &\cap Y^{i+1}(k_{2(i+1)}, \rho_{2(i+1)})  \\
&
\mid
|x_{k_{2i+1}} - u^{i}|= \rho_{2i+1} \ \text{or} \ |x_{k_{2(i+1)}} - u^{i+1}|= \rho_{2(i+1)}
\},
\end{align*}
\end{enumerate}
\end{itemize}
where $X^i=X^0$ when $i$ is even, and $X^i=X^1$ when $i$ is odd.
For the above ${(N_i)}_{i \in \Z}$, we construct in Step $4$, we take $|k_{2i}-k_{2i+1}|$ so that
\begin{itemize}
\item[$(k_1)$] $|k_{2i}-k_{2i+1}| \gg N_{2i} + N_{2i+1}$
\end{itemize} 
We will give more precise conditions for $k$ and explain the role of the sequence $\delta$ later.
\end{itemize}

We shall show that our statement holds for $\rho$ and $k$ along with the above steps.
Combining Lemmas \ref{lemm:estimate_periodic1} and \ref{lemm:estimate_periodic2} implies that $x^\ast_i \notin \{u^0,u^1\}$ for all $i \in \Z$.
We assume $x^\ast_{k_1} = u^0 + \rho_1$ for a contradiction argument.
Let $y=(y_i)_{i \in \Z}$ be:
\begin{align*}
y_i=
\begin{cases}
     x_i^\ast & i < k_1-n_1 \ \text{or} \ i >k_2+n_2\\
     x^1_i &  i \in [k_1-n_1,k_2+n_2 ]
\end{cases},
\end{align*}
where $x^1 = (x_i^1)_{i\in\Z}$ is given in Step $4$.
%Hereafter, $x_i^\ast$ will be written simply as $x_i$ unless there is potential for confusion.
It is sufficient to show that:
\[
\sum_{i \in [k_1-n_1-1,k_2+n_2+1 ]} A_i(y) < \sum_{i \in [k_1-l_1-1,k_2+n_2+1 ]} A_i(x^\ast).
\]
Applying Lemma \ref{lemm:distance} and \ref{lemm:distance_2} for $m=2$ shows that,
for $|k_{2i}-k_{2i+1}|$ sufficiently large, there exist $n_{2i-1} \ge N_{2i-1}$ and $n_{2i} \ge N_{2i}$ such that:
\[
| x^{i-1}_{k_{2i-1} + \sigma(i) n_{2i-1}}- y_{k_{2i-1} + \sigma(i) n_{2i-1}} | \le \delta_{2i-1}, \text{and} \  | x^i_{k_{2i} - \sigma(i)  n_{2i}} - y_{k_{2i}  - \sigma(i) n_{2i}} | \le \delta_{2i}.
\]
Applying Step $4$ and Lipschitz continuity of $h$, we see that:
\begin{align*}
& \sum_{i \in [k_1-n_1-1,k_2+n_2+1 ]}( A_i(x^\ast) - A_{i}(y)) = \sum_{i \in [k_1-n_1-1,k_2+n_2+1 ]}( a_i(x^\ast) - a_{i}(y))\\
 &\ge \tilde{e}_1 - 2\tilde{\epsilon}_1  + (h(x_{k_1-n_1-1},x_{k_1-n_1}) - h(x_{k_1-n_1-1},y_{k_1-n_1}) )+  (h(x_{k_2+n_2},x_{k_2+n_2+1}) - h(y_{k_2+n_2},x_{k_2+n_2+1})) \\
 &\ge  \tilde{e}_1 - 2\tilde{\epsilon}_1  - C(\delta_{1} + \delta_{2}) > \frac{\tilde{e}_1}{2}> 0.
\end{align*}
This completes the proof.
\end{proof}
%%%%%%
%以下，査読業者のチェックなし

\if0
\begin{theo}
For any $\epsilon=(\epsilon_{2i})_{i \in \Z}$ with $\epsilon_i >0$,
there are sequences $\rho=(\rho_i)_{i \in \Z} \in P$ and $\tilde{I}=(\tilde{I}_i)_{i \in \Z}$ such that
for any $k =(k_i)_{i \in \Z} \in K$ with $|k_i-k_{i+1}| \ge \tilde{I}_i$ for all $i \in \Z$,
for each $i \in \Z$, $|x_j^\ast-u^i| \le \epsilon_{j}$ for all $j \in [k_i,k_{i+1}] \cap \Z$
 where $x^\ast_{k, \rho}=(x^\ast_i)_{i \in \Z}$ 
\end{theo}
\begin{proof}
\end{proof}
\fi

Moreover, we immediately obtain the following:
\begin{proof}[Proof of Theorem \ref{theo:main} for $\alpha \in \Q \backslash \{0\}$]
Let
$x^+=(x^{+}_i)_{i \in \Z}$ and $x^-=(x^{-}_i)_{i \in \Z} \in \calm_{\alpha}^{\per}$ satisfy
that $(x^{-}_0, x^{+}_0)$ is a neighboring pair of $\calm_{\alpha}^{\per}$,
and
$(x_0,x_1, \cdots,x_q)$ satisfy $h(x_{0}, \cdots, x_{q}) =H(x_{0},x_{q})$.
It is clear that
if $|x_0-x_0^{\pm}| \to 0$ and $|x_q-x_q^{\pm}| \to 0$,
then  $|x_i-x_i^{\pm} | \to 0$ for all $i=1, \cdots, q-1$,
so the obtained local minimizer with rotation number $\alpha$ holds the second property of Theorem \ref{theo:main}.
Thus we get a desired stationary configuration from Proposition \ref{prop:alpha_configuration} and the proofs of Theorem \ref{theo:main} for $\alpha=0$.
\end{proof}

Though the previous discussion treats bi-infinite transition orbits,
we can construct one-sided infinite transition orbits
by replacing $c(j)$ of $J$ with:
\begin{align*}
\tilde{c}(j) =
\begin{dcases}
\frac{\calc(|I_{2i+1}|,u^i,u^{i+1})}{|I_{2i+1}|}&\ (j \in I_{2i+1} \ \text{for some} \  i \ge 0),\\
\frac{\calc(|I_{2i}|,u^i,u^i)}{|I_{2i}|}  & \ (\text{otherwise}),
\end{dcases}
\end{align*}
%(see $\eqref{c}$ for the definition of $c$)
and
$\xkr$ with:
\[
\tilde{X}_{k,\rho}(a,b)
=
 \bigcap_{i \in \Z}
 \left\{
\left( \bigcap_{i < 0}  Y^a(k_{bi},\rho_{0}) \right) \cap
\left( \bigcap_{i \equiv 0,1, i \ge 0}  Y^a(k_{bi},\rho_{bi}) \right) \cap \left( \bigcap_{i \equiv -1,2, i \ge 0}  Y^{|1-a|}(k_{bi},\rho_{bi}) \right)
\right\}
,
\]
where $a \in \{0,1\}$, $b \in \{-1,1\}$, $k \in K$ and $\rho \in P$.
Let $\tilde{J}$ be the replaced function instead of $J$, i.e.,
\[
\tilde{J}(x) = \sum_{j \in \Z} (h(x_j,x_{j+1})-\tilde{c}(j)).
\]
Notice that Proposition \ref{prop:finite} implies that if $x=(x_i)_{i \in \Z}$ satisfies that $\tilde{J}(x)$ is finite,
then $|x_i - x^{a}_{i}|\to 0$ \ $(bi \to \infty)$.
\if0
\begin{figure}[htbp]
    \centering
    \includegraphics[width=8.5cm]{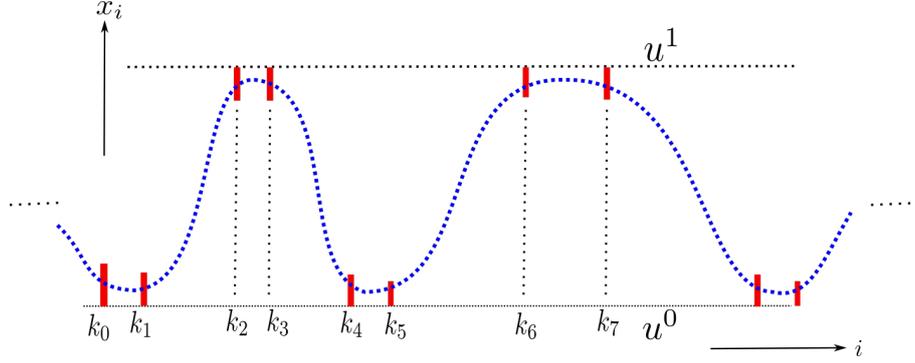}
  \caption{An element of $\tilde{X}_{k,\rho}(0,1)$}
\end{figure}
\fi
Thus we get:
\begin{theo}[]
\label{theo:main_2}
Assume the same condition of Theorem \ref{theo:yu_main}.
Then, for
any $a \in \{0,1\}$, $b \in \{-1,1\}$, and
positive sequence ${\epsilon}=(\epsilon_i)_{i \in \Z}$ with $\epsilon_i$ small enough,
there is an $m=\{m_i\}_{i \in \Z}$
such that for every sequence of integers $k=(k_i)_{i \in \Z}$ with $k_{i+1}-k_{i} \ge m_i$,
there is a stationary configuration $x$ satisfying:
\begin{enumerate}
\item $x_i^0<x_i<x_{i}^1$ for all $i \in \Z$;
\item for any $j \in \Z$, \ $|x_{i}-x^{2j+a}_{i}| \le \epsilon_i$  if $i \in [k_{4j}, k_{4j+1}]$ \text{and} \, 
$|x_{i}-x^{2j-1+a}_{i}| \le \epsilon_i$ if $i \in [k_{4j+2}, k_{4j+3}]$;
\item $|x_i - x^{a}_{i}|\to 0$ \ $(bi \to \infty)$.
\end{enumerate}
%For any $j \in \Z$, $x^j=x^0$, if $j$ is even, and $x^j=x^1$, if $j$ is odd.
%Under the same assumption as Theorem \ref{theo:yu_main},
%there exist uncountably infinitely many $\infty$-transition orbits.
\end{theo}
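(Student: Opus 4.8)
The plan is to reproduce, \emph{mutatis mutandis}, the $\alpha=0$ argument for Theorem~\ref{theo:main}, now with the renormalized functional $\tilde{J}$ in place of $J$ and the configuration space $\tilde{X}_{k,\rho}(a,b)$ in place of $\xkr$. First I would re-establish the four structural facts of Subsection~\ref{subsec:setting} for this modified problem: the analog of Lemma~\ref{lemm:upper} (there is a $y$ with $\tilde{J}(y)=0$, built by concatenating period-realizing minimal segments on the transition blocks and the constant configuration $u^a$ on the asymptotic block), the analog of Lemma~\ref{lemm:lower} (a uniform bound $\tilde{J}\ge M>-\infty$), the analog of Lemma~\ref{lemm:welldefi} (well-definedness of $\tilde{J}$ in $\R\cup\{+\infty\}$ through the split-tail estimate), and finally the analog of Proposition~\ref{prop:min} (existence of a minimizer $x^\ast$), the last using sequential compactness of $\tilde{X}_{k,\rho}(a,b)$ together with the lower-semicontinuity estimate \eqref{eq:lower_conti}. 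Since $\tilde{X}_{k,\rho}(a,b)$ is again a closed subset of the sequentially compact $X$, compactness is free.

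On the transition side (the blocks with index $i\ge 0$) the normalization $\tilde{c}$ coincides with $c$, so these arguments transcribe verbatim from the bi-infinite case. The only genuinely new point is the asymptotic block, where the configuration is pinned within the \emph{fixed} radius $\rho_0$ of $u^a$ at infinitely many sample points $k_{bi}$, $i<0$, and the normalization is again the periodic constant $c$. Here the naive bound $-2C\sum_i\rho_i$ of Lemma~\ref{lemm:lower} diverges, so instead I would confine the minimizer to a thin band about $u^a$ via Lemma~\ref{lemm:distance} (legitimate once $\rho_0$ is taken below the radii $r_1,r_2$ there) and then bound it below with the finite-window estimate of Lemma~\ref{lemm:finite_bdd}: on any window of the asymptotic block the partial action dominates $-C|x_n-x_0|$, and the endpoint separation is at most twice the band width, hence uniformly bounded independently of the window length. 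Combined with the transition-side bound this gives the analog of Lemma~\ref{lemm:lower}, and the well-definedness proof then follows exactly as in Lemma~\ref{lemm:welldefi}.

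With a minimizer $x^\ast$ in hand, the core is to show $x^\ast$ does not touch the boundary of $\tilde{X}_{k,\rho}(a,b)$, so that Lemmas~\ref{lemm:estimate_periodic1} and~\ref{lemm:estimate_periodic2} upgrade it to a stationary configuration with $u^0<x^\ast_i<u^1$ (conclusion~1). I would choose $\rho\in P$ and $k\in K$ by the same Steps~1--4 as in the $\alpha=0$ proof: the gap hypothesis \eqref{hetero_gap} lets me place each $u^i\pm\sigma(i)\rho_i$ in the complement of $I^\pm_0(u^0,u^1)$ and control the band width through $(p_2)$, while Lemmas~\ref{lemm:finite_het} and~\ref{lemm:finite_fake_het} furnish the energy gap $\tilde{e}_i$ between a true heteroclinic segment $x^i\in\calm^i(u^0,u^1)$ and any competitor that reaches a boundary radius $\rho_{2i+1}$ or $\rho_{2(i+1)}$. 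If $x^\ast$ hit the boundary, say $|x^\ast_{k_{2i+1}}-u^i|=\rho_{2i+1}$, I would splice in $x^i$ over the corresponding long block and estimate, via Step~4 and the Lipschitz property of $h$ (Remark~\ref{rem:delta_bangert}(b)), that the action strictly drops by at least $\tilde{e}_i-2\tilde{\epsilon}_i-C(\delta_{2i-1}+\delta_{2i})>\tilde{e}_i/2>0$, contradicting minimality; this is the verbatim analog of the displayed chain ending ``$>\tilde{e}_1/2>0$.''

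Conclusion~2 (each $x^\ast_i$ within the prescribed tolerance $\epsilon_i$ of the appropriate periodic orbit on its band) then follows from Lemma~\ref{lemm:distance} under the $(p_2)$ choice, exactly as for $\alpha=0$, and conclusion~3 ($|x^\ast_i-x^a_i|\to 0$ as $bi\to\infty$) is immediate from the remark preceding the theorem: $\tilde{J}(x^\ast)<\infty$ forces, through Proposition~\ref{prop:finite}, convergence to the single periodic orbit $u^a$ on the asymptotic side. The hard part, as flagged above, is precisely the asymptotic-side lower bound and well-definedness: because $\rho_0$ is not summable there, one cannot reuse the $\sum_i\rho_i<\infty$ bookkeeping and must instead extract a length-uniform bound from Lemma~\ref{lemm:finite_bdd} and defer the actual tail convergence to Proposition~\ref{prop:finite}. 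Everything else is a transcription of the $\alpha=0$ argument, which is why the result is ``immediate.''
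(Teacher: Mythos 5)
Your route is the intended one: the paper offers no separate argument for Theorem~\ref{theo:main_2} beyond introducing $\tilde{c}$, $\tilde{X}_{k,\rho}(a,b)$ and $\tilde{J}$ and remarking that Proposition~\ref{prop:finite} yields conclusion~3, so your block-by-block transcription of the $\alpha=0$ proof, with the gap hypothesis \eqref{hetero_gap} driving the boundary-avoidance step and Proposition~\ref{prop:finite} supplying the asymptotic convergence, is exactly what ``Thus we get'' is meant to abbreviate. You also correctly isolate the only genuinely new feature, namely the asymptotic block where the configuration is pinned to $u^a$ with the \emph{fixed} radius $\rho_0$ at infinitely many sample points.

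However, your treatment of that block has one step that does not close as stated. Lemma~\ref{lemm:finite_bdd} does give the length-uniform lower bound you want (any window with both endpoints at constraint points contributes at least $-2C\rho_0$), so the analog of Lemma~\ref{lemm:lower} is fine. But the claim that well-definedness ``follows exactly as in Lemma~\ref{lemm:welldefi}'' is not justified: that proof hinges on $\sum_{|i|>n}\rho_i\to 0$, which makes the negative defect of far-out windows vanish and forces $\liminf=\limsup$. On the asymptotic side the defect of a far-out window is only bounded below by $-2C\rho_0$, which does not vanish. Since $h(\xi,\eta)-c$ can be strictly negative off the diagonal (the paper only excludes this under the symmetry \eqref{rev} of Section~\ref{sec:example_h}; for a fixed point with nonzero momentum $y_0=\partial_1h(u^0,u^0)$ one has $h(u^0-ty_0,u^0+ty_0)<c$ for small $t>0$), one can build $x\in\tilde{X}_{k,\rho}(a,b)$ whose asymptotic blocks alternately contribute about $-\eta$ and $+\eta$ with $\eta$ of order $\rho_0$, so that the symmetric partial sums oscillate and $\tilde{J}(x)$ is not well defined. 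You need an extra ingredient here: either impose summable pinning radii on the asymptotic side as well (after which the whole bi-infinite bookkeeping transfers verbatim and Proposition~\ref{prop:finite} still gives conclusion~3), or define $\tilde{J}$ as a $\liminf$, run the compactness and comparison arguments with that convention, and verify well-definedness a posteriori only for the minimizer. As written, Lemma~\ref{lemm:distance} cannot substitute for this, since it controls minimal segments rather than arbitrary elements of the configuration space on which $\tilde{J}$ must be defined before minimizing.
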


%Billiard maps
\if0
\subsection{Billiard maps}
Next, we consider a billiard map that does not satisfy the twist condition.
Let $f \colon \R \to \R$ be a  positive smooth function satisfying $f(x)=f(x+1)$ for all $x \in \R$.
Set an area $D=D(f)$ by
\[
D = \{ (x,y) \in \R^2 \mid -f(x) \le y \le f(x)\}.
\]
Let $s$ be an arc-length parameter, i.e., $s$ is given by
\[
s=\int_{0}^{x} \sqrt{1 + f'(\tau)^2} d \tau =:g(x)
\]
and $x$ is represented by $x=g^{-1}(s)$. Moreover, we set
\[
\tilde{f}(s) = f(g^{-1}(s)).
\]
We can see a variational structure of billiard maps for the above settings.
Consider
\[
h(s,s') = d_{\mathrm{E}}(a_{+}(s),a_{-}(s')),
\]
where 
$a_{\pm}(s):=(g^-(s), \pm \tilde{f}(s))$
and
$d_{\mathrm{E}}$ is Euclidean metric on $\R^2$.
We check that $h$ satisfies $(h_1)$-$(h_5)$.
\fi

\section{Additional remarks}
\label{sec:add}
\subsection{The number of infinite transition orbits}
\label{sec:general}
We first see that Theorem \ref{theo:main} and \ref{theo:main_2} show the existence of uncountable many infinite transition orbits.
We can take $k \in K$ and $\rho \in P$ given in Theorem \ref{theo:main} so that for all $i \in \N$, $k_{i}-k_{i-1} < k_{i+1}-k_{i} $ and $ k_{-i} - k_{-(i+1)} <k_{-i+1}-k_{-i} $.
For $j=(j_i)_{i \in \Z} \in K$, set:
\begin{align*}
X_j=
 \bigcap_{i \in \Z}
 \left\{
\left( \bigcap_{i \equiv 0,1}  Y^0(k_{j_i},\rho_i) \right) \cap \left( \bigcap_{i \equiv -1,2}  Y^1(k_{j_i},\rho_i) \right)
\right\}
\end{align*}
Let  $x^\ast(j)$ be a minimizer of $J$ on $X_j$, i.e., 
\[
J(x^\ast(j))=\inf_{x \in X_j} J(x).
\]
%Let $j^0=(j^0_i)_{i \in \Z}$ be a sequence of $j^0_i=i$.
The previous section deals with the case of $j^0=(j^0_i=i)_{i \in \Z}$.
It is easily seen that 
if $l \neq m \in K$, then $x^\ast(l)$ and $x^\ast(m)$ are different and
we immediately get the following theorem.
\begin{theo}
Let $\# \chi_1 $ and $\# \chi_2$   be the number of infinite transition orbits in Theorem  \ref{theo:main} and \ref{theo:main_2}.
Then $\# \chi_1 =\# \chi_2 = \# \R$.
\end{theo}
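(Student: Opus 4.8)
The plan is to sandwich each of $\#\chi_1$ and $\#\chi_2$ between $\#\R$ from below and $\#\R$ from above and then conclude by the Cantor--Schr\"oder--Bernstein theorem (indeed $\#\R \le \#\chi_i \le \#\R$ already forces equality). The upper bound is immediate: every infinite transition orbit is a configuration in $X \subset \R^\Z$, and $\#(\R^\Z) = (2^{\aleph_0})^{\aleph_0} = 2^{\aleph_0} = \#\R$, so $\#\chi_1 \le \#\R$ and $\#\chi_2 \le \#\R$. Thus the real task is to exhibit $\#\R$ many pairwise distinct orbits, for which the family $\{x^\ast(j)\}_{j \in K}$ built above is the natural candidate.

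First I would compute $\#K$. Using $j_0 = 0$, the gap map $j = (j_i)_{i \in \Z} \mapsto (j_{i+1} - j_i)_{i \in \Z}$ is a bijection between $K$ and the set $(\Z_{>0})^\Z$ of bi-infinite sequences of positive integers (the sequence is recovered from its gaps together with $j_0 = 0$). Hence $\#K = \aleph_0^{\aleph_0}$, and the squeeze $2^{\aleph_0} \le \aleph_0^{\aleph_0} \le (2^{\aleph_0})^{\aleph_0} = 2^{\aleph_0}$ gives $\#K = 2^{\aleph_0} = \#\R$.

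Next I would verify that $j \mapsto x^\ast(j)$ is a well-defined injection of $K$ into $\chi_1$. For each $j \in K$ the reindexed sequence $(k_{j_i})_{i \in \Z}$ again lies in $K$, being strictly increasing with zeroth term $k_{j_0} = k_0 = 0$; moreover, because the gaps $k_{i+1} - k_i$ were chosen strictly increasing in $|i|$ and already large, the gaps $k_{j_{i+1}} - k_{j_i}$ are at least as large as the corresponding original gaps and still satisfy the largeness condition $(k_1)$ used in the proof of Theorem \ref{theo:main}. Therefore $x^\ast(j)$ is exactly the orbit furnished by Theorem \ref{theo:main} for the admissible pair $((k_{j_i})_{i \in \Z}, \rho)$, so $x^\ast(j) \in \chi_1$. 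Combined with the injectivity recorded just above the statement, this yields $\#\R = \#K \le \#\chi_1 \le \#\R$, whence $\#\chi_1 = \#\R$. Repeating the construction with $\tilde{J}$ and $\tilde{X}_{k,\rho}$ in place of $J$ and $\xkr$ gives an analogous injection $K \hookrightarrow \chi_2$, so $\#\chi_2 = \#\R$ as well.

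The only non-formal ingredient, and hence the step I expect to be the main obstacle, is the injectivity statement that $x^\ast(l) \ne x^\ast(m)$ whenever $l \ne m$: reassigning the constraint indices through $j$ must genuinely move the minimizer rather than return the same configuration. I would deduce this from Lemmas \ref{lemm:distance} and \ref{lemm:distance_2}: the former confines $x^\ast(j)$ to an $\epsilon_i$-band around $u^i$ on each plateau, while the transition blocks force it to leave every such band, so that the plateau and transition lengths of $x^\ast(j)$ are read off from the strictly increasing, hence pairwise distinct, gaps $k_{j_{i+1}} - k_{j_i}$; this lets one recover $j$ from $x^\ast(j)$ and so establishes injectivity. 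Everything else reduces to the cardinal arithmetic above.
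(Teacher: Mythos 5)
Your proposal is correct and follows essentially the same route as the paper: both index the orbits by $j \in K$ through the reindexed constraint sets $X_j$ and their minimizers $x^\ast(j)$, reducing the theorem to $\#K = \#\R$ together with injectivity of $j \mapsto x^\ast(j)$. The only differences are cosmetic — you compute $\#K$ by cardinal arithmetic ($\aleph_0^{\aleph_0}=2^{\aleph_0}$) where the paper encodes decimal expansions of positive reals into gap sequences, and you supply a sketch of the injectivity (via Lemmas \ref{lemm:distance} and \ref{lemm:distance_2}) and of the admissibility of the reindexed $(k_{j_i})$, both of which the paper merely asserts as clear.
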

\begin{proof}
We only discuss the case of Theorem  \ref{theo:main}.
For any real number $r \in \R_{>0}$, we can choose a corresponding bi-infinite sequence ${(a_i)}_{i \in \Z} \subset \Z_{\ge 0}$.
(For example,  when $r=12.34$, $a_{-1}=1, a_0=2, a_1=3, a_2=4$ and $a_i =0$ otherwise.)
The proof is straightforward by setting $a_i := j_{i+1} - j_i -1$ for $i \in \Z$.
It is also clear that if $r_1 \neq r_2$, 
each corresponding stationary configuration is different.
A similar proof is valid for Theorem \ref{theo:main_2}.
\end{proof}

\if0
 To obtain a more general representation of the variational structure for transition orbits,
set $A = \{A_0,A_1\}$ and:
\begin{align*}
\xkr(A) = \left( \bigcap_{i \in A_0}  Y^0(k_{i},\rho_i) \right) \cap \left( \bigcap_{i \in A_1}  Y^1(k_{i},\rho_i) \right),
\end{align*}
where $A_0$ and $A_1$ are sets so that  $A_0 \cap A_1 = \emptyset$.
For $A$, we set:
\[
\cals:=\cals(A)=\{i \in \Z \mid i \in A_j  \text{\ and \ } i+1 \in A_{|j-1|} \, (j=0 \text{\, or \,}1) \},
\]
and
\[
\tilde{P} := \tilde{P}(\cals) = \left\{ \rho= (\rho_i)_{i \in \Z} \mid 0<\rho_i<(u^1-u^0)/2, \ \sum_{i \in \cals} \rho_i <\infty \right\}.
\]
Clearly, it always holds that $\sum_{i \in \cals} \rho_i <\infty$ if $\# \cals < \infty$.
When $A_0 \cup A_1 = \Z$, for $k \in K$ and $\rho \in\tilde{P}$,
let $\tilde{J} \colon \R^\Z \to \R$ be given by:
\begin{align}
\label{re:actionJ}
\tilde{J}(x) :=\tilde{J}_{k,\rho,A}(x) = \sum_{i \in \Z} B_i(x),
\end{align}
where
\begin{align}
\label{action}
B_i(x)=
\begin{cases}
      \{\sum_{j \in I_i}  h(x_j,x_{j+1})\} - |I_i|c & { i , i+1\in A_0}  \text{\ or  \ } { i , i+1\in A_1}  \\
      \{ \sum_{j \in I_i} h(x_j,x_{j+1}) \} -c_i^{+}& {i \in A_1 \text{\ and \ } i+1 \in A_0}\\
      \{ \sum_{j \in I_i} h(x_j,x_{j+1} ) \} - c_i^{-}&  {i \in A_0 \text{\ and \ } i+1 \in A_1}
\end{cases}.
\end{align}
For example, Section \ref{sec:pf} dealt with the case of $A_0 = \{i \in \Z \mid i \equiv 0,1 \ (\text{mod} \ 4)\}$
and $A_1=\{i \in \Z \mid i \equiv -1,2 \ (\text{mod} \ 4)\}$, so $\cals=\Z$.

 Let $x^\ast(k,\rho,A)$ be a minimizer on $\xkr(A) $, i.e.,
 $x^\ast(k,\rho,A)$ satisfies:
\begin{align*}
\tilj(x^\ast(k,\rho,A))= \inf_{x \in \xkr(A) } \tilj(x).
\end{align*}
It is easily seen that there exists a minimizer  $x^\ast(k,\rho,A)$
by a similar discussion of Proposition \ref{prop:finite} and \ref{prop:min}.
In a similar way of proving in Theorem \ref{theo:main},
we obtain the following.
\begin{theo}
Assume that $\eqref{hetero_gap}$ and there is no $i \in \Z$ such that 
$i-1 \in A_{|j-1|}$, $i \in A_{j}$, and $i+1 \in A_{|j-1|}$ for $j=0$ or $1$.
Then,
for any $\epsilon>0$,
there exist two sequences $k \in K$ and $\rho \in \tilde{P}$ with $\rho_i < \epsilon$ for all $i \in \Z$
such that $x^\ast(k,\rho,A)$ is a stationary configuration
and it has $\# \cals$ transitions.
\end{theo}
\begin{proof}
First, we consider $\# \cals=1$.
Only this case does not require $\eqref{hetero_gap}$ (but, of course, a neighboring pair $(u^0,u^1)$ is needed).
Set $A_0 =\Z_{\le 0}$, $A_1=\Z_{>0}$,
 $k \in K$ is given by $k_i=i$ \, $(i \in A_0) $  and $k_{i+1} = k_{i}+1$  \, $(i \in A_1)$, and
 $\rho \in \tilde{P}$ is a constant sequence, i.e., $\rho_i = \epsilon_0 < \epsilon$ for all $i \in \Z$.
We define $k_1$ later.
Clearly, $I(x)-\tilj(x)$ is a finite value when  $\# \cals < \infty$, and
 Proposition \ref{prop:finite} implies that $|x^\ast_i - u^0| \to 0$ as $i \to -\infty$ and   $|x^\ast_i - u^1| \to 0$ as $i \to \infty$
 for all $k, \rho$.
 By using $(h_1)$ we can assume that $u^0<x^\ast_0< u^0 + \rho_0$.
 
 Hereafter, we abbreviate $x^\ast$ as $x$.
 We first show that $x$ is strictly monotone for $i \in A_0$, i.e., $x_{i-1} < x_{i}$ for all $i \in \Z$.
 The following proof of monotonicity is similar to Proposition 3.5 of \cite{Yu2022}.
 Assume $x_j=x_{j+1}$ for some $j \in \Z$. Lemma \ref{lemm:estimate_periodic1} implies $x_j \in (u^0,u^1)$. Then $h(x_j,x_j)-c>0$.
Set $\bar{x} = (\cdots, x_{j-1}, x_{j+1}, \cdots)$ satisfying $x_0 = \bar{x}_0$ and $\bar{x} \in \xkr(A)$, then $\tilj(x)>\tilj(\bar{x})$.
Next, we assume that $(x_j -x_{j-1})(x_j -x_{j+1})>0$ for some $j \in \Z$. By using $(h_3)$,
\begin{align*}
&\tilj(x) - \tilj(\bar{x})\\
&> \tilj(x) - (\tilj(\bar{x})  + h(x_j,x_j)-c) \\
&> h(x_{j-1},x_j) + h(x_{j},x_{j+1}) -(h(x_{j+1},x_{j+1}) +h(x_{j},x_j) ) >0,
\end{align*}
which is a contradiction.

Now we check that there is $k_1 \in \Z_{>0}$ satisfying $x_{k_1}> u^1 - \rho_1$. 
We assume that for any $k_1 \in \Z$, $x_{k_1}= u^1 - \rho_1$.
The monotonicity of $x$ implies that
if $x_{k_1}= u^1 - \rho_1$, then
\[
\min_{j \in \{ 0,1\}} |u^j-x_i| \ge \min\{x^0-u^0,\rho_1\} (=:\delta)>0
\] for $i=0, \cdots, k_1$.
Applying Lemma \ref{lemm:yu_lower} and Lipschitz continuity, we obtain:
\begin{align*}
\sum_{i=0}^{k_1-1} a_i(x) \ge k_1 \phi(\delta) - C|x_{k_1}-x_0|.
\end{align*}
The left side goes to infinity as $k_1 \to \infty$, which is a contradiction.
Hence, there is $k_i \in \Z_{>0}$ such that  $x_{k_1}> u^1 - \rho_1$.
Moreover, by Lemma \ref{lemm:estimate_periodic1} and \ref{lemm:estimate_periodic2}, it holds that $x_i \in (u^0,u^1)$ for all $i \in \Z$.

Secondly, we consider the case of $\# \cals \ge 2$.
For example, set $A_0 = \Z \backslash A_1$ and $A_1 = \{1,2,\cdots, n\}$.
Lemma \ref{lemm:finite_delta} means that
for any $\rho \in \tilde{P}$, there exists $k \in K$ such that a minimizer of $\tilj_{k,\rho,\{A_0,A_1\}}$ on $\xkr(\{  A_0,A_1\})$ and $\xkr(\{ A_0, \{1,n\}\})$ are the same.
The case of $A_0 = \Z \backslash A_1$ and  $A_1 = \{1, 2\}$ is shown in a similar way to the proof of Theorem \ref{theo:inf_transition}.
In the same manner, we can show the remaining cases.
\end{proof}
\fi

\subsection{A special case}
\label{sec:example_h}
We will give a special example at the end of this paper.
In the previous section,  we cannot generally show:
\begin{align}
\label{per_min_ineq}
h(x,y) -c \ge 0.
\end{align}
Therefore the proof of Proposition \ref{prop:finite} is somewhat technical.
However, as we will see later, \eqref{per_min_ineq} holds if $h$ satisfies:
\begin{align}
\label{rev}
h(x,y) = h(y,x).
\end{align}
This is kind of natural because the analogy of \eqref{per_min_ineq} for differential equations
 holds in variational structures of potential systems with reversibility (see \cite{Rabinowitz2008}).
One of the examples satisfying  \eqref{rev} is the Frenkel-Kontorova model  \cite{AubrDaer1983, FrenKont1939}
and the corresponding $h$ is given by:
\begin{align}
\label{fkmodel}
h(x,y)=\frac{1}{2} \left\{ C(x-y)^2 + V(x) + V(y) \right\},
\end{align}
where $C$ is a positive constant and $V(x)=V(x+1)$ for all $x \in \R $.
Since  $\partial_1 \partial_2 h \le -C < 0$,
Remark \ref{rem:delta_bangert} implies that \eqref{fkmodel} satisfies $(h_1)$-$(h_5)$.
Using $\eqref{rev}$, we can easily show the following lemma, which implies $h(x,y) -c \ge 0$.
\begin{lemm}
If a continuous function $h \colon \R^2 \to \R$ satisfies ($h_1$)-($h_3$) and $\eqref{rev}$,
then all minimizers of $h$ are $(1,0)$-periodic, i.e.,
\[
\inf_{x \in \R} h(x,x) = \inf_{(x,y) \in \R^2} h(x,y).
\]
\end{lemm}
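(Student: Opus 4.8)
The inequality $\inf_{x \in \R} h(x,x) \ge \inf_{(x,y) \in \R^2} h(x,y)$ is immediate, since the diagonal is a subset of $\R^2$. The plan is therefore to prove the reverse inequality by showing that the global infimum of $h$ over $\R^2$ is \emph{attained on the diagonal}. Once a global minimizer $(x^\ast, y^\ast)$ is produced, the reversibility $\eqref{rev}$ together with the strict inequality $(h_3)$ will force $x^\ast = y^\ast$, and the two infima will coincide.

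First I would establish that the infimum is attained. By $(h_1)$ the function $h$ is invariant under the diagonal translation $(x,y) \mapsto (x+1, y+1)$, so it descends to a continuous function $g(\bar{x}, s) := h(x, x+s)$ on the cylinder $(\R/\Z) \times \R$, where $\bar{x}$ denotes $x$ modulo $1$ and $s = y - x$. Assumption $(h_2)$ gives $g(\bar{x}, s) \to \infty$ as $s \to +\infty$, uniformly in $\bar{x}$. To control the opposite direction I would invoke reversibility: $h(x, x+s) = h(x+s, x) = h\bigl(x+s, (x+s) + (-s)\bigr)$, and since $-s \to +\infty$ as $s \to -\infty$, applying $(h_2)$ again yields $g(\bar{x}, s) \to \infty$ uniformly as $s \to -\infty$. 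Hence $g$ is coercive in $s$, uniformly in $\bar{x}$, so its infimum is realized on a compact set $(\R/\Z) \times [-S, S]$ and is attained by continuity. This produces a global minimizer $(x^\ast, y^\ast)$ of $h$.

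The heart of the argument is then short. Suppose, for contradiction, that $x^\ast \ne y^\ast$; after swapping the two arguments (using $\eqref{rev}$) I may assume $x^\ast < y^\ast$. Applying $(h_3)$ with $\underline{\xi} = \underline{\eta} = x^\ast$ and $\bar{\xi} = \bar{\eta} = y^\ast$ gives
\[
h(x^\ast, x^\ast) + h(y^\ast, y^\ast) < h(x^\ast, y^\ast) + h(y^\ast, x^\ast) = 2\, h(x^\ast, y^\ast),
\]
where the equality uses $\eqref{rev}$. Since $h(x^\ast, y^\ast)$ is the global minimum, both $h(x^\ast, x^\ast)$ and $h(y^\ast, y^\ast)$ are at least $h(x^\ast, y^\ast)$, so their sum is at least $2\, h(x^\ast, y^\ast)$, contradicting the strict inequality. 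Therefore $x^\ast = y^\ast$, the minimum is attained on the diagonal, and the claimed identity follows.

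I expect the main obstacle to lie in the attainment step rather than in the crossing argument: the coercivity of $h$ as $y - x \to -\infty$ is not supplied directly by $(h_2)$ and must be recovered from reversibility, and some care is needed to pass to the quotient cylinder so that compactness applies. The final contradiction via $(h_3)$ and $\eqref{rev}$ is then essentially a one-line computation.
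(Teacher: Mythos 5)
Your proof is correct, and its decisive step is the same as the paper's: apply $(h_3)$ with the degenerate choice $\underline{\xi}=\underline{\eta}=x$, $\bar{\xi}=\bar{\eta}=y$ and use \eqref{rev} to identify $h(x,y)$ with $h(y,x)$, producing $h(x,x)+h(y,y) < 2h(x,y)$ against an opposite inequality coming from minimality. The difference is in how that opposite inequality is obtained. The paper never proves that the infimum of $h$ over $\R^2$ is attained: it takes a minimizer $x^\ast$ of the \emph{diagonal} restriction (which exists by $(h_1)$ and continuity on the compact interval $[0,1]$), assumes for contradiction that some off-diagonal $(x,y)$ satisfies $h(x,y) < h(x^\ast,x^\ast)$, and closes the loop via $2h(x,y) < 2h(x^\ast,x^\ast) \le h(x,x)+h(y,y)$, using only diagonal minimality. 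You instead first establish attainment of the global minimum on the cylinder, which costs you the extra coercivity argument as $y-x\to-\infty$ (your recovery of this from \eqref{rev} and $(h_2)$ is correct, since $h(x,x+s)=h\bigl(x+s,(x+s)+(-s)\bigr)$), and then use global minimality of $(x^\ast,y^\ast)$ in the contradiction. Your route is heavier in the setup but buys something the paper's phrasing of the lemma ("all minimizers") quietly presupposes: that a global minimizer of $h$ on $\R^2$ actually exists. The paper's route is shorter and shows only the equality of the two infima, which suffices for how the lemma is used.
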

\begin{proof}
First, we see that it follows from $(h_2)$ that there exists an infimum of $ h(x,y)$ on $\R^2$.
From $(h_1)$, we can choice $x^\ast$ satisfying $h(x^\ast,x^\ast)=\min_{x \in [0,1]} h(x,x)=\inf_{x \in \R} h(x,x) $.
By contradiction, there is $(x,y) \in \R^2$ such that $x \neq y$ and $h(x,y) < h(x^\ast,x^\ast)$.
Then, $\eqref{rev}$ implies:
\[
h(x,y) + h(y,x) < h(x^\ast,x^\ast) + h(x^\ast,x^\ast) \le  h(x,x) + h(y,y),
\]
but it contradicts $(h_3)$.
\end{proof}

If $h$ satisfies \eqref{rev},
minimal configurations are `almost' monotone in the following sense:
\begin{prop}
Let $n \in \N$ be arbitrary number
and $x=(x_{i})_{i=0}^{n}$ be a finite configuration with $x_0=a$, $x_n=b$ and $a<b$ \ (resp. $a>b$).
If there exist two integers $m$ and $l$
satisfying $0<m<n$,  $0\le m-l<m+l+1 \le n$,
 $x_{m}> x_{m+1}$ (resp. $x_{m}< x_{m+1}$) and $x_{m-l}<x_{m+l+1}$ \ (resp. $ x_{m-l}>x_{m+l+1}$),
 then $x=(x_{i})_{i=0}^{n}$ is not minimal.
\end{prop}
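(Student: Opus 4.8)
We have a finite configuration $x=(x_i)_{i=0}^n$ with endpoints $x_0=a$, $x_n=b$, $a<b$, and the hypothesis assumes a "reversal" at position $m$ (so $x_m > x_{m+1}$) while a symmetrically-placed pair $x_{m-l} < x_{m+l+1}$ retains the "correct" orientation. We need to show such an $x$ is not minimal. The reversibility $h(x,y)=h(y,x)$ should let us exploit a reflection symmetry in the indices centered at the midpoint $m+1/2$ of the reversed bond.

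**The plan.** The plan is to produce a strictly cheaper competitor by reflecting a middle block of the configuration and invoking $(h_3)$ together with reversibility $\eqref{rev}$. Concretely, I would consider the reflected configuration on the symmetric window $[m-l,\,m+l+1]$: define $y_i = x_{(2m+1)-i}$ for $i \in [m-l, m+l+1]$ and $y_i=x_i$ otherwise. Because the window is symmetric about the half-integer $m+\tfrac12$, this is an involution that fixes the two endpoints of the window ($y_{m-l}=x_{m+l+1}$ and $y_{m+l+1}=x_{m-l}$ are swapped, but the hypothesis $x_{m-l}<x_{m+l+1}$ together with $a<b$ governs how the swap changes cost). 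The key point: by reversibility $h(u,v)=h(v,u)$, the action of the interior reflected bonds is unchanged, so the only change in $h$ comes from the two boundary bonds joining the window to the rest of $x$, namely the bonds at $(x_{m-l-1},\,\cdot)$ and $(\cdot,\,x_{m+l+2})$.

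**Carrying it out.** First I would write the cost difference $h(x_0,\dots,x_n)-h(y_0,\dots,y_n)$ and observe that all interior bonds inside the window cancel in pairs thanks to $\eqref{rev}$: the bond $h(x_j,x_{j+1})$ in $x$ matches the reversed bond $h(y_{2m+1-j},y_{2m-j})=h(x_{j+1},x_j)=h(x_j,x_{j+1})$ in $y$. Hence the difference reduces to comparing the two "seam" bonds. Using $y_{m-l}=x_{m+l+1}$ and $y_{m+l+1}=x_{m-l}$, the difference becomes
\[
\bigl(h(x_{m-l-1},x_{m-l}) + h(x_{m+l+1},x_{m+l+2})\bigr) - \bigl(h(x_{m-l-1},x_{m+l+1}) + h(x_{m-l},x_{m+l+2})\bigr).
\]
Now I would apply the strict four-point inequality $(h_3)$ (or equivalently $(h_5)$) to this expression. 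Since $x_{m-l}<x_{m+l+1}$ by hypothesis, setting $\underline\xi = x_{m-l}$, $\bar\xi = x_{m+l+1}$ and matching the appropriate $\eta$-arguments, $(h_3)$ forces the reflected seam bonds to be strictly cheaper, giving a strict decrease — provided the seam actually involves a genuine crossing. The role of $x_m > x_{m+1}$ is to guarantee that $y\neq x$ (the reflected block is genuinely different), so that $y$ is an admissible strictly distinct competitor with the same endpoints.

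**The main obstacle.** The delicate step will be organizing the seam comparison so that the strict convexity inequality $(h_3)$ applies with the correct ordering of arguments: $(h_3)$ requires a specific orientation $\underline\xi<\bar\xi$ and $\underline\eta<\bar\eta$, and I must verify that the two swapped endpoints $x_{m-l}<x_{m+l+1}$, together with whatever ordering holds at $x_{m-l-1}$ and $x_{m+l+2}$, assemble into exactly that configuration. If the neighboring values are not automatically ordered, I expect I will need to argue that one can shrink $l$ (or relocate the window) so that the hypotheses $x_m>x_{m+1}$ and $x_{m-l}<x_{m+l+1}$ pin down a window whose two seam endpoints straddle in the required way — essentially an intermediate-value / discrete-crossing argument. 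This boundary bookkeeping, rather than any deep estimate, is where the real care is needed; once the seam is correctly oriented, strictness of $(h_3)$ closes the argument immediately.
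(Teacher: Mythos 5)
Your overall strategy---reflect a block of the configuration, use the reversibility $h(u,v)=h(v,u)$ to cancel the interior bonds, and apply the strict four-point inequality $(h_3)$ to the two seam bonds---is exactly the mechanism of the paper's proof. But your specific choice of reflected block, the closed window $[m-l,\,m+l+1]$, is off by one, and this creates two concrete problems. First, the hypothesis allows $m-l=0$ and $m+l+1=n$, in which case your seams sit at the nonexistent indices $m-l-1$ and $m+l+2$, and your competitor swaps the prescribed endpoints ($y_0=b$, $y_n=a$), so it is not admissible in the definition of minimality. Second, even when the seams exist, your seam comparison
\[
h(x_{m-l-1},x_{m-l})+h(x_{m+l+1},x_{m+l+2})-h(x_{m-l-1},x_{m+l+1})-h(x_{m-l},x_{m+l+2})
\]
needs, besides the hypothesized $x_{m-l}<x_{m+l+1}$, an ordering of $x_{m-l-1}$ versus $x_{m+l+2}$ that is nowhere assumed; the hypothesis pair $x_m>x_{m+1}$ only tells you the competitor is distinct, and you correctly sense that this leaves $(h_3)$ unusable as stated.

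The fix is precisely the window-shrinking you gesture at, and it is what the paper does: reflect the \emph{open interior} of the window. For $l=1$ one simply swaps $x_m$ and $x_{m+1}$, keeping $x_{m-1}$ and $x_{m+2}$ (and hence the endpoints $a,b$) fixed; the middle bond is invariant by reversibility, and the two seam bonds are compared via $(h_3)$ with $\underline{\xi}=x_{m-1}<x_{m+2}=\bar{\xi}$ and $\underline{\eta}=x_{m+1}<x_m=\bar{\eta}$ --- exactly the two hypothesized inequalities, no extra ordering required. For general $l$ the paper asserts the same reasoning applies; making that rigorous requires the discrete intermediate-value step you anticipate (pass to the smallest $l'$ with $x_{m-l'}<x_{m+l'+1}$, so that the adjacent pair is oriented the other way), so your instinct about where the real bookkeeping lies is sound. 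As written, though, the proposal does not close: you should restate the competitor with the seams at $(x_{m-l},\cdot)$ and $(\cdot,x_{m+l+1})$ rather than one step further out.
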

\begin{proof}
%monotone
We only consider the case where  $l=1$.
To prove our statement, it suffices to construct a finite configuration $y=(y_i)_{i=m-1}^{m+2}$
satisfying
$\sum_{i=m-l}^{m+l} h(y_i,y_{i+1})< \sum_{i=m-l}^{m+l} h(x_i,x_{i+1})$.
Set $y=(y_i)_{i=m-1}^{m+2}$
by $y_i = x_{i}$ \ $(i=m-1,m+2)$, $y_m=x_{m+1}$, and $y_{m+1} = x_{m}$.
Applying $\eqref{rev}$ and $(h_3)$, we have:
\begin{align*}
 &\sum_{i=m-1}^{m+1} h(x_i,x_{i+1}) -  \sum_{i=m-1}^{m+1} h(y_i,y_{i+1})\\
& =h(x_{m-1},x_m) + h(x_{m+1},x_{m+2}) -h(x_{m-1},x_{m+1}) + h(x_{m},x_{m+2})\\
& =h(x_{m-1},x_m) + h(x_{m+2},x_{m+1}) -h(x_{m-1},x_{m+1}) + h(x_{m+2},x_{m})>0
\end{align*}
The same reasoning applies to the remaining cases.
This completes the proof.
\end{proof}

\if0
\section{Other patterns}
We can show the existence ... in a similar way
by replacing $\Z$ with $\Z>0$ and considering
\[
J(x) =  \sum_{i \in Z_{\le 0}} a_i(x) +  \sum_{\Z_{>0}} A_i(x) 
\]
We however fix our proof slightly because

Moreover, the generalized argument implies
\begin{theo}
For any $\alpha \in \Q$
\end{theo}

We can replace $P$ with $P^+$ or $P^-$, where
\begin{align*}
P^+&=\left\{ \rho=(\rho_i)_{i \in \Z} \subset \R_{>0} \mid 0 <\rho_i <1, \ \sum_{i \in \Z_{>0}} \rho_i <\infty   \right\} and\\
P^-&=\left\{ \rho=(\rho_i)_{i \in \Z} \subset \R_{>0} \mid 0 <\rho_i <1, \ \sum_{i \in \Z_{<0}} \rho_i <\infty   \right\}.
\end{align*}
We need to give the assumption of limit.
e.g., when considering $P^+$

\newpage

\section{Examples}
\label{sec:ex}
\subsection{The Frenkel-Kontorova model}
Consider
\[
h(x,y)=\frac{1}{2} \left\{ C(x-y)^2 + V(x) + V(y) \right\}
\]
where $V(x)=V(x+1)$ for all $x \in \R $.
It is called the Frenkel-Kontorova model.
Clearly, $\partial h_1h_2 \le \delta < 0$ and
\begin{align}
\label{rev}
h(x,y) = h(y,x).
\end{align}
Using $\eqref{rev}$, we can easily show
\begin{lemm}
If a continuous function $h \colon \R^2 \to \R^2$ satisfies ($h_1$)-($h_4$) and $\eqref{rev}$,
then all minimizers are $(1,0)$-periodic.
\end{lemm}
\begin{proof}
From $(h_1)$, we can choice $x^\ast$ satisfying $h(x^\ast,x^\ast)=\min_{x \in \R} h(x,x)$.
By contradiction, there is $(x,y) \in \R^2$ such that $x \neq y$ and $h(x,y) < h(x^\ast,x^\ast)$. Then
\[
h(x,y) + h(y,x) < h(x^\ast,x^\ast) + h(x^\ast,x^\ast) \le  h(x,x) + h(y,y),
\]
but it contradicts $(h_3)$.
\end{proof}
The above lemma implies $h(x,y) -c \ge 0$.

\subsection{Billiyard maps}
In this paragraph, we introduce an example of not satisfying the twist condition.
Set a domain $D=D(f_1,f_2)$ by
\[
D = \{ (x,y) \in \R^2 \mid f_1(x) \le y \le f_2(x)\}
\]
where
$f_1$ and $f_2 $ are continuous function on $\R^2$.
Consider
\[
h = .
\]
We check that $h$ satisfies $(h_1)$-$(h_5)$.

\newpage

\appendix
\section{Another proof of heteroclinic and homoclinic orbits}
The advantage of using this method is that there is no need to study asymptotic behavior when $|i| \to \infty$.

In this section, we give another existence proof of finite transition orbits.
In \cite{Yu2022}, he considered $X^0$ and $X^1$ which restrict convergence of elements when $|i| \to \infty$ to get heteroclinic configurations.
We will replace  $X^0$ and $X^1$ with  $Y^0$ and $Y^1$ given by
\begin{align*}
Y^0\\
Y^1
\end{align*}

we get infinite transition orbits and they are different from our orbits in Theorem.
\fi

%%% Acknowledgments %%%%%%%%%%%%%%%%%%%%%%%%%%%%%%%%%%%%%%%%%%%%%%%%%%%%%%%%%%%%%
%\section*{Acknowledgment}
%The author would like to express her sincere gratitude
%to her supervisor, Mitsuru Shibayama, for his continued encouragement.
%This research was supported by Grant-in-Aid for JSPS Fellows (20J21214) from the Japan Society for the Promotion of Science.

%%% References %%%%%%%%%%%%%%%%%%%%%%%%%%%%%%%%%%%%%%%%%%%%%%%%%%%%%%%%%%%%%%%%%%
%\addcontentsline{toc}{section}{\refname} % Add to the table of contents.
                                         % Delete if you use the chapter option.
%\begin{thebibliography}{10}
%\end{thebibliography}
%%% If you want to use BibTeX, delete the above and insert code here.
%% \bibliographystyle{...}
\bibliographystyle{amsplain}
\bibliography{kajihara_reference}
%% \bibliography{...}

\end{document}